\documentclass[11pt,reqno]{amsart}

\usepackage[T1]{fontenc}
\usepackage{lmodern}
\usepackage{microtype}
\usepackage[margin=1.03in]{geometry}
\usepackage{amsmath,amssymb,amsfonts,mathtools}
\usepackage{amsthm}
\usepackage{booktabs}
\usepackage{enumitem}
\usepackage{xcolor}
\usepackage[colorlinks=true,
  linkcolor=blue!50!black,
  citecolor=blue!50!black,
  urlcolor=blue!50!black]{hyperref}
\usepackage[capitalize,noabbrev]{cleveref}

\allowdisplaybreaks
\numberwithin{equation}{section}
\setlist[enumerate]{leftmargin=2.2em,itemsep=0.25em,topsep=0.4em}
\setlist[itemize]{leftmargin=2em,itemsep=0.25em,topsep=0.4em}

\theoremstyle{plain}
\newtheorem{theorem}{Theorem}[section]
\newtheorem{proposition}[theorem]{Proposition}
\newtheorem{lemma}[theorem]{Lemma}
\newtheorem{corollary}[theorem]{Corollary}

\theoremstyle{definition}
\newtheorem{definition}[theorem]{Definition}

\theoremstyle{remark}
\newtheorem{remark}[theorem]{Remark}

\title[Bouvier's conjecture and dimension sequences of UFDs]{Bouvier's conjecture and dimension sequences of unique factorization domains}

\author{Viet-Hoang Tran}
\address{Department of Mathematics, National University of Singapore, Singapore 119076}
\email{hoang.tranviet@u.nus.edu}
\urladdr{https://vh-tran.github.io/}

\author{Thieu N. Vo}
\address{Department of Computer Science, University of Bath, United Kingdom}
\email{ntv22@bath.ac.uk}

\author{Tan M. Nguyen}
\address{Department of Mathematics, National University of Singapore, Singapore 119076}
\email{tanmn@nus.edu.sg}
\urladdr{https://tanmnguyen89.github.io/}

\date{}

\subjclass[2020]{Primary 13C15, 13F15; Secondary 13B25, 13F05, 13G05}
\keywords{Krull dimension, unique factorization domain,
Jaffard domain, Bouvier's conjecture}

\hypersetup{
  pdftitle={Bouvier's conjecture and dimension sequences of UFDs},
  pdfauthor={Viet-Hoang Tran}
}

\begin{document}
\raggedbottom

\begin{abstract}
We prove Bouvier's conjecture. More generally, an integer sequence
\((a_n)_{n\ge0}\) with \(a_0=d\ge0\) is realized by a unique factorization domain (UFD) \(R\)
with \(\dim R[X_1,\ldots,X_n]=a_n\) for every \(n\ge0\) if and only if
\[
 a_n+1\le a_{n+1}\le
 a_n+\left\lfloor\frac{a_n+1}{n+1}\right\rfloor
 \qquad(n\ge0)
\]
and \(a_1\le2d\) whenever \(d\ge1\).
\end{abstract}

\maketitle

\section{Introduction}\label{sec:introduction}

Write \(A[n]=A[X_1,\ldots,X_n]\), with \(A[0]=A\).
The dimension-sequence problem asks which integer sequences
\((a_n)_{n\ge0}\) occur as \(a_n=\dim A[n]\).
Seidenberg proved that a commutative ring of finite dimension \(d\)
satisfies the bounds \cite{Seidenberg1953}
\[
 d+1\le\dim A[X]\le2d+1
\]
and subsequently realized every integer pair in this range by an integrally closed domain
\cite[Theorem~3]{Seidenberg1954}.
Arnold and Gilmer solved the unrestricted sequence problem
\cite{ArnoldGilmer1973,ArnoldGilmer1974}: its solutions are finite
coordinatewise maxima of sequences \((b_n)\) with positive
nonincreasing successive differences and \(b_1-b_0\le b_0+1\).
Parker gave the numerical characterization \cite{Parker1975}:
a sequence \((a_n)_{n\ge0}\) is realized by a nonzero commutative
ring if and only if \(a_0\ge0\) and
\begin{equation}\label{eq:P}
 a_n+1\le a_{n+1}\le
 a_n+\left\lfloor\frac{a_n+1}{n+1}\right\rfloor
 \qquad(n\ge0).
\end{equation}

For a domain \(A\), the valuative dimension \(\dim_{\mathrm v}A\)
is the supremum of the dimensions of its valuation overrings.
Jaffard's theory identifies the finite-dimensional domains satisfying
\(\dim_{\mathrm v}A=\dim A\) with those for which
\(\dim A[n]=\dim A+n\) for every \(n\)
\cite{Jaffard1960}; these are called \emph{Jaffard domains}.
According to Fontana and Kabbaj, Bouvier conjectured in a 1985
course that finite-dimensional Krull domains, and even UFDs, need
not be Jaffard \cite{FontanaKabbaj2004}.
Bouvier and Kabbaj published the factorial existence problem and
conjectured that every positive integer occurs as the difference
between valuative and Krull dimension of a UFD
\cite[p.~163]{BouvierKabbaj1988}.
They also asked for a finite-dimensional UFD that is not a
strong \(S\)-domain.
Here an \(S\)-domain is a domain whose height-one primes retain
height one under polynomial extension in one indeterminate;
a strong \(S\)-domain has this property in every prime quotient.

Bouchiba and Kabbaj surveyed known families of finite-dimensional
non-Noetherian Krull domains; the examples considered there were
Jaffard and often locally Jaffard
\cite{BouchibaKabbaj2009}.
The one-variable existence problem appears as Problem~37a in
\cite{CahenFontanaFrischGlaz2014}: can a Krull domain, or a UFD,
satisfy \(\dim A[X]>\dim A+1\)?
This is equivalent to the non-Jaffard existence problem: if
\(m\) is the first index of excess growth, replace \(A\) by
\(A[m-1]\), which retains the Krull or UFD property.

We prove Bouvier's conjecture by constructing a two-dimensional
local UFD \(A\) with \(\dim A[X]=4\). To our knowledge, these
are the first examples of finite-dimensional non-Jaffard Krull
domains, and they satisfy the stronger UFD requirement.
More generally, we solve the full dimension-sequence problem for UFDs.
Besides \eqref{eq:P},
a UFD of dimension \(d\ge1\) necessarily satisfies
\begin{equation}\label{eq:K}
 a_1\le2d,
\end{equation}
(see \cref{prop:necessary}). These conditions give an exact characterization.

\begin{theorem}\label{thm:main}
Let \((a_n)_{n\ge0}\) be a sequence of integers with
\(a_0=d\ge0\). There exists a UFD \(R\) such that
\[
 \dim R[X_1,\ldots,X_n]=a_n\qquad(n\ge0)
\]
if and only if
\[
 a_n+1\le a_{n+1}\le
 a_n+\left\lfloor\frac{a_n+1}{n+1}\right\rfloor
 \quad(n\ge0),
 \qquad a_1\le2d\ \text{if }d\ge1.
\]
\end{theorem}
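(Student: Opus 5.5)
The necessity direction is short, and I would dispose of it first. Condition \eqref{eq:P} holds for every nonzero commutative ring by Parker's theorem \cite{Parker1975}, so in particular for a UFD. The extra inequality \eqref{eq:K} for \(d\ge1\) is \cref{prop:necessary}. For \(d=0\), \eqref{eq:P} at \(n=0\) gives \(a_1\le 1\), and inductively \(a_n=n\); a field realizes this, so the case \(d=0\) is trivial in both directions. From now on I assume \(d\ge1\).

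For sufficiency I would reduce to ``building blocks'' and then glue. First I want a combinatorial lemma in the spirit of Arnold--Gilmer \cite{ArnoldGilmer1973,ArnoldGilmer1974}: every sequence satisfying \eqref{eq:P} and \eqref{eq:K} is the coordinatewise maximum of finitely many sequences \((b_n)\), each satisfying the following conditions.
\begin{itemize}
\item \(1\le b_0\le d\), with at least one component having \(b_0=d\).
\item The successive differences \(b_{n+1}-b_n\) are positive and nonincreasing.
\item The strengthened first step \(b_1-b_0\le b_0\) holds, replacing Arnold--Gilmer's \(b_0+1\).
\end{itemize}
I would prove this by Parker's greedy construction: for each index \(m\) at which \(a\) makes a jump, take the component that agrees with \(a\) at \(m\) and grows as fast as allowed up to \(m\) and minimally afterwards. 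The point to check is that the first-step bound \(a_1\le 2d\) propagates to each component's first difference. This check happens only at \(n=0\), because for \(n\ge1\) Parker's bound \(\lfloor (a_n+1)/(n+1)\rfloor\) is already at most \(a_n\).

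Next I would realize each block sequence \(b\) by a local UFD \((A_b,\mathfrak m_b)\) with \(\dim A_b[n]=b_n\) for all \(n\). Here I would start from a local integrally closed domain \(D_b\) realizing \(b\), obtained from Seidenberg/Arnold--Gilmer type pullback constructions of valuation domains. I would then pass to a factorial domain with the same dimension behavior via a Nagata-type factorialization: adjoin indeterminates and localize (or take a suitable ring of the form \(D_b[\{T_i\}]_S\)) so that the class group is killed while the chains of primes that control \(\dim(-)[n]\) survive. The condition \(b_1-b_0\le b_0\) is precisely what should make room for this, since a UFD cannot reach \(2b_0+1\) (this is the content of \cref{prop:necessary}). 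The two-dimensional local UFD with \(\dim A[X]=4\) announced in the introduction is the base case \(b=(2,4,\dots)\) of this construction.

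Finally I would glue the finitely many blocks into one semilocal domain \(R\) with common fraction field, whose localizations at its maximal ideals are the \(A_b\). For this I would use Nagata's theorem on finite intersections of local domains, after arranging a common fraction field by adjoining indeterminates, which does not change the blocks' dimension sequences once everything is localized at the maximal ideals. A semilocal domain whose localizations are UFDs and whose Picard-type obstruction vanishes is a UFD. Then \(\dim R[n]=\max_{\mathfrak m}\dim R_{\mathfrak m}[n]=\max_b b_n=a_n\).

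I expect the main obstacle to be the block construction: producing a \emph{factorial} local domain whose polynomial extensions exceed the Jaffard value, since this is exactly what Bouvier's conjecture asks. The gluing step, which must preserve factoriality, is a secondary difficulty.
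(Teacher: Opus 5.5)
Your necessity argument and the case $d=0$ are fine. The sufficiency plan, however, fails at the combinatorial lemma, which is false. The bound $a_1\le 2d$ controls the first step of a component only when $b_0=d$. A component with $b_0=r<d$ can have $b_1=2r+1\le a_1$, and such components cannot be avoided.

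Take $a=(2,3,5,6,7,8,\dots)$, i.e.\ $a_n=n+\max\{2,1+\min(n,2)\}$. It satisfies \eqref{eq:P} and \eqref{eq:K}, and it is realized by a two-dimensional local UFD (\cref{cor:two-dimensional-minimal} with $t=2$; this is the case $r=1$ of \cref{cor:bouvier-defect}). In any finite maximum of sequences $b\le a$ with positive nonincreasing increments, some component has $b_2=5$. Since $b_1\le 3$, its second increment is at least $2$, hence so is its first, which forces $b_0=1$ and $b_1=3$. That violates your condition $b_1-b_0\le b_0$, and no one-dimensional UFD has $\dim A[X]=3$.

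The missing idea is the delayed profile \eqref{eq:delayed-profile}, $B_{r,\mathbf t}(n)=n+\max\{r+1,\,r+\sum_i\min(n,t_i)\}$. In \cref{cor:krull-envelope} it arises as the maximum of such a dimension-$r$ support, whose first step may be $r+1$, with the baseline $n+r+1$. Its increments are not monotone (here $1,2,1,1,\dots$). The paper realizes it by a single local UFD of dimension $r+1$: \cref{thm:relative} creates a height-two prime $h$ containing $\pi$ with $\tau_A(0,h)=t$, while keeping $A[1/\pi]$ a PID. The quantitative Bouvier--Kabbaj examples are of exactly this kind, so the omission is substantive.

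The block and gluing steps are also not yet arguments. The pullback domains realizing non-Jaffard Arnold--Gilmer blocks are not Krull. In a Krull domain, $\tau_A(0,\mathfrak p)=0$ for every height-one prime $\mathfrak p$ (\cref{rem:krull-preliminaries}), so by \cref{thm:capacity} all excess has to come from edges ending at primes of height at least two. Adjoining indeterminates and localizing gives no control over this. A ``Nagata-type factorialization'' preserving the relevant chains is not an available tool; it is a restatement of Bouvier's problem. The paper instead builds a semilocal UFD $C$ with prime elements $\pi_j$ whose quotients $C_j/\pi_jC_j$ are the pullback towers $D_{j,d_j}$, so that the capacity sits on primes containing $\pi_j$ (\cref{thm:ordinary}). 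This takes a long direct-limit construction controlling factoriality, generic PIDs and all residue capacities.

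For gluing, a finite intersection of local domains with a common fraction field need not have them as localizations. Also, the padding must preserve the whole profile; localizing $A[Z]$ at $(\mathfrak m,Z)$, for instance, does not. The paper pads by association extensions (\cref{lem:rational-padding}) and constructs a common base $S$ in which every nonzero prime lies in exactly one maximal ideal (\cref{thm:separation}). It then applies \cref{lem:birational-association} and \cref{prop:exact-gluing}.
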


For \(d=0\), the first inequality forces \(a_n=n\), which is realized
by a field. For \(d\ge1\), the unrestricted sequences excluded
by the second inequality are exactly those with \(a_1=2d+1\).
Thus the change from the classical problem is confined to the
first-step bound.

\begin{corollary}[Bouvier's conjecture]\label{cor:bouvier}
There exists a local UFD \(A\) such that
\[
 \dim A=2,\qquad \dim A[X]=4.
\]
In particular, finite-dimensional non-Jaffard Krull domains exist.
\end{corollary}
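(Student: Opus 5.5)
The plan is to derive the corollary from \cref{thm:main} and then localize. The theorem gives a UFD, which need not be local. Localizing at a suitable prime keeps both the UFD property and the dimension jump.

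First, I will choose the sequence \(a_0=2\) and \(a_n=n+3\) for \(n\ge1\), and check the hypotheses of \cref{thm:main}. For \(n=0\) we have \(3\le a_1=4\le 2+\lfloor 3/1\rfloor=5\), and the extra condition \(a_1\le 2d=4\) holds. For \(n\ge1\) the step is \(a_{n+1}-a_n=1\), which lies between \(1\) and \(\lfloor (a_n+1)/(n+1)\rfloor\), since \(a_n+1=n+4\ge n+1\). So \cref{thm:main} yields a UFD \(R\) with \(\dim R=2\) and \(\dim R[X]=4\).

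Second, I will localize. Fix a chain \(Q_0\subsetneq\cdots\subsetneq Q_4\) of primes in \(R[X]\), and put \(P=Q_4\cap R\). Every \(Q_i\) contracts into \(P\), so each \(Q_i\) misses the multiplicative set \(S=R\setminus P\). Hence the chain survives in \(S^{-1}(R[X])=R_P[X]\), and \(\dim R_P[X]\ge4\).

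The local ring \(A=R_P\) is a UFD, since localizations of UFDs are UFDs, and \(\dim A\le\dim R=2\). To pin down \(\dim A\) I will use the necessary bound \eqref{eq:K} from \cref{prop:necessary}, namely \(\dim A[X]\le 2\dim A\) for a UFD of positive dimension. If \(\dim A=0\), then \(A\) is a field and \(\dim A[X]=1\). If \(\dim A=1\), then \(\dim A[X]\le2\). Both contradict \(\dim A[X]\ge4\), so \(\dim A=2\). The same bound then gives \(\dim A[X]\le4\), hence \(\dim A[X]=4\).

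Finally, a UFD is a Krull domain, and \(\dim A[X]=4\ne\dim A+1\). By Jaffard's characterization recalled in the introduction, \(A\) is therefore a finite-dimensional non-Jaffard Krull domain.

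The only point needing care is the localization step: the chosen chain must contract into \(P\), and \(\dim A\) must be pinned down exactly. Both are handled by taking \(P\) to be the contraction of the top prime of the chain and by applying \eqref{eq:K}. All of the real difficulty lies in \cref{thm:main} itself.
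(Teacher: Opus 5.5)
Your argument is correct, but it reaches the corollary by a different and much longer route than the paper. The paper never invokes \cref{thm:main}. It applies \cref{thm:ordinary} with a single branch ($m=1$, $d_1=1$, $t_{1,1}=1$), so the ring produced has exactly one maximal ideal. It is therefore already a local UFD with $\dim A[n]=n+2+\min(n,1)$ for all $n$, and no localization is needed. Only the ordinary construction of \cref{sec:ordinary} is used, not the relative Hilbert construction, the separation theorem, or the gluing.

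Because you treat \cref{thm:main} as a black box, you must allow the realizing UFD to be merely semilocal. That is what \cref{thm:mixed} actually produces, after \cref{cor:krull-envelope} splits your sequence into several supporting profiles. Your localization at the contraction of the top prime of a length-four chain handles this cleanly. The step pinning down $\dim A=2$ works because the jump $a_1=2d$ is extremal: \eqref{eq:K}, or even Seidenberg's bound $\dim A[X]\le 2\dim A+1$, excludes $\dim A\le 1$.

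Your route yields a general principle: any UFD realizing $(2,4,\ldots)$ localizes to a local one. It also loses nothing, since $\dim A[n]\le\dim R[n]=n+3$ and $\dim A[n]\ge\dim A[1]+n-1$, so the localized ring has the same full profile. The paper's route instead gives a far shorter dependency chain. It also delivers explicit extra properties of the ring: residue field $\mathbb Q$, a dominating discrete valuation, and $A[1/\pi]$ a PID.
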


The constructions also realize later excess growth after a minimal
first increase. More precisely, for every \(r\ge1\) there is
a two-dimensional local UFD \(A\) with
\[
 \dim A[X]=3,\qquad \dim_{\mathrm v}A=2+r;
\]
there is such an example with infinite valuative dimension as well
(\cref{cor:bouvier-defect}). Thus we also prove the stronger
quantitative conjecture of Bouvier and Kabbaj. The full range of two-dimensional
sequences is described in \cref{cor:two-dimensional-minimal,cor:two-dimensional-full}.

The proof first expresses polynomial dimension by residue
transcendence along prime flags and reduces the numerical conditions
to finite maxima of basic profiles. We then construct the basic
UFDs and realize their finite maxima by faithfully flat extensions
and birational gluing.
Throughout, rings are nonzero and commutative with identity,
homomorphisms preserve identity, and all dimension-sequence entries
are finite. For an infinite cap we set \(\min(n,\infty)=n\).

\section{Prime chains and residue transcendence}
\label{sec:background}

For a prime \(\mathfrak p\) of a domain \(A\), write
\(\kappa(\mathfrak p)=\operatorname{Frac}(A/\mathfrak p)\).
A valuation ring \(V\) \emph{dominates} a local domain \((B,\mathfrak m)\)
if \(B\subseteq V\) and the maximal ideal of \(V\) contracts to
\(\mathfrak m\). Every local domain is dominated by a valuation ring
of its fraction field.

We use this existence theorem, extension of
valuations to field extensions, and the correspondence between valuation
overrings and convex subgroups of the value group in their usual forms
\cite[Ch.~VI]{BourbakiCA}. No restriction on valuation rank is imposed.

We shall also use the elementary facts that faithfully flat maps are
surjective on spectra and satisfy going down, and that integral maps
satisfy lying over, going up, and incomparability
\cite[Chs.~3, 5]{AtiyahMacdonald1969}. For a prime \(Q\) in a
polynomial ring over a field \(K\),
\begin{equation}\label{eq:field-height}
 \operatorname{ht}Q+\operatorname{trdeg}_K
       \operatorname{Frac}(K[n]/Q)=n.
\end{equation}
This is the usual dimension theorem for affine domains over a field.

\begin{lemma}[Residues under field extension]\label{lem:residue-extension}
Let \(K\subseteq E\) be fields, let \(W\) be a valuation ring of
\(E\), and put \(V=W\cap K\). Then
\begin{equation}\label{eq:residue-extension}
 \operatorname{trdeg}_{\kappa(V)}\kappa(W)
       \leq\operatorname{trdeg}_K E.
\end{equation}
In particular an algebraic field extension induces an algebraic residue
extension. If \(k\subseteq E\), the valuation is trivial on \(k\),
and some element of \(E\) has positive value, then
\begin{equation}\label{eq:positive-value-bound}
 1+\operatorname{trdeg}_k\kappa(W)
       \leq\operatorname{trdeg}_k E.
\end{equation}
\end{lemma}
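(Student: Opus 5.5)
Both inequalities are forms of the classical Abhyankar-type inequality for residue transcendence degree, and the plan is to derive them from elementary linear-independence arguments on residues. For \eqref{eq:residue-extension}, suppose \(\bar t_1,\ldots,\bar t_m\in\kappa(W)\) are algebraically independent over \(\kappa(V)\), with lifts \(t_1,\ldots,t_m\in W\). It suffices to show that \(t_1,\ldots,t_m\) are algebraically independent over \(K\); this gives \(m\le\operatorname{trdeg}_K E\). If the trdeg on the right is infinite there is nothing to prove, so assume it is finite.

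The key step is the standard Gauss-norm argument. Let \(f=\sum_\alpha c_\alpha t^\alpha\) with \(c_\alpha\in K\), not all zero. Choose the index \(\beta\) with \(W\)-value \(w(c_\beta)\) minimal; note \(w\) restricted to \(K\) is the valuation of \(V\). Divide by \(c_\beta\), so that all coefficients \(c_\alpha/c_\beta\) lie in \(W\cap K=V\) and one of them equals \(1\). Reducing modulo the maximal ideal of \(W\) gives \(\sum_\alpha \overline{(c_\alpha/c_\beta)}\,\bar t^\alpha\), a nonzero polynomial with coefficients in \(\kappa(V)\) evaluated at \(\bar t\). This reduction is nonzero by the independence of the \(\bar t_i\). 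Hence \(f(t)/c_\beta\) is a unit of \(W\), in particular nonzero. The same argument shows \(w(f(t))=\min_\alpha w(c_\alpha)\), although only nonvanishing is needed. The "in particular" clause is then the case \(\operatorname{trdeg}_K E=0\).

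For \eqref{eq:positive-value-bound}, apply the same idea with \(K\) replaced by \(k\), adjoining one extra element. Let \(x\in E\) with \(w(x)>0\), and let \(\bar t_1,\ldots,\bar t_m\) be algebraically independent over \(k\) in \(\kappa(W)\), with lifts \(t_i\in W\). Here \(\kappa(W\cap k)=k\), since the valuation is trivial on \(k\). We claim \(x,t_1,\ldots,t_m\) are algebraically independent over \(k\). Write a nonzero polynomial relation as \(\sum_j g_j(t)x^j\) with \(g_j\in k[T]\). By the first part (applied over \(k\)), each nonzero \(g_j(t)\) is a unit of \(W\), so \(w(g_j(t)x^j)=j\,w(x)\). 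These values are distinct for distinct \(j\), since \(w(x)>0\) lies in an ordered, hence torsion-free, group. The minimum is therefore attained uniquely, so the sum has that value and is nonzero. This gives \(m+1\le\operatorname{trdeg}_k E\).

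The only mildly delicate point is the infinite-transcendence and arbitrary-rank setting. The argument never uses the rank of \(W\); it only uses the ordered value group and the ultrametric inequality. It also takes finite subsets of algebraically independent residues, so suprema are handled automatically.
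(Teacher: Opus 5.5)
Your proposal is correct and follows essentially the same argument as the paper. For \eqref{eq:residue-extension}, you divide a putative relation over \(K\) by its coefficient of least value and reduce, which yields a nontrivial relation over \(\kappa(V)\). For \eqref{eq:positive-value-bound}, you group a relation by powers of an element of positive value, so the unit coefficient polynomials force a unique term of least value and cancellation is impossible.
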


\begin{proof}
Lift a finite tuple of residues algebraically independent over
\(\kappa(V)\) to units of \(W\). In a polynomial relation over
\(K\), divide the coefficients by one of least value. The resulting
coefficients lie in \(V\), and at least one is a unit. Reduction gives
a nonzero relation over \(\kappa(V)\), a contradiction. This proves
\eqref{eq:residue-extension}, including infinite transcendence degrees.
For \eqref{eq:positive-value-bound}, choose an element \(z\) of
positive value and unit lifts of a residue tuple independent over \(k\).
In a relation grouped by powers of \(z\), every nonzero coefficient
polynomial has value zero. The first nonzero term then has strictly
least value, so cancellation is impossible.
\end{proof}

\begin{definition}\label{def:capacity}
For a domain \(A\) and primes \(\mathfrak p\subsetneq\mathfrak q\),
define the \emph{residue capacity}
\[
 \tau_A(\mathfrak p,\mathfrak q)
 =\sup_V\operatorname{trdeg}_{\kappa(\mathfrak q)}\kappa(V)
 \in\mathbb N_0\cup\{\infty\},
\]
where \(V\) ranges over the valuation rings of
\(\kappa(\mathfrak p)\) dominating
\((A/\mathfrak p)_{\mathfrak q/\mathfrak p}\). Domination supplies
the indicated inclusion of residue fields. We set
\(\min(n,\infty)=n\).
\end{definition}

\begin{theorem}[The prime-flag formula]\label{thm:capacity}
For every finite-dimensional domain \(A\) and every \(n\geq0\),
\begin{equation}\label{eq:capacity-formula}
 \dim A[n]=n+\max_{0=\mathfrak p_0\subsetneq\cdots\subsetneq\mathfrak p_r}
 \left\{r+\sum_{i=0}^{r-1}
        \min\bigl(n,\tau_A(\mathfrak p_i,\mathfrak p_{i+1})\bigr)\right\}.
\end{equation}
The flag consisting only of zero is allowed. Flags need not be saturated.
\end{theorem}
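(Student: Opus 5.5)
We prove the two inequalities separately. Both run through the standard description of a prime chain in \(A[n]\) by its contractions to \(A\). Throughout, for a prime \(Q\) of \(A[n]\) lying over \(\mathfrak p\), the fiber \(Q/\mathfrak pA[n]\) is a prime of \(\kappa(\mathfrak p)[n]\) after localization. By \eqref{eq:field-height}, its height plus the transcendence degree of its residue field over \(\kappa(\mathfrak p)\) equals \(n\).

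\emph{Upper bound.} Take a maximal chain \(Q_0\subsetneq\cdots\subsetneq Q_s\) in \(A[n]\). Let \(0=\mathfrak p_0\subsetneq\cdots\subsetneq\mathfrak p_r\) be the distinct contractions. For each \(i\), let \(Q'_i\) be the largest and \(Q''_{i}\) the smallest chain member over \(\mathfrak p_i\). Write \(t_i=\operatorname{trdeg}_{\kappa(\mathfrak p_i)}\kappa(Q)\) for the residue field of a prime \(Q\) over \(\mathfrak p_i\).
\begin{enumerate}
\item The members over \(\mathfrak p_i\) form a chain in the fiber. Hence their number is at most \(1+t_i(Q''_i)-t_i(Q'_i)\), with \(t_i(Q''_i)\le n\).
\item Each jump \(Q'_i\subsetneq Q''_{i+1}\) must be compared with \(\tau_A(\mathfrak p_i,\mathfrak p_{i+1})\). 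The ring \(A[n]/Q'_i\) has fraction field \(L=\kappa(Q'_i)\), with \(\operatorname{trdeg}_{\kappa(\mathfrak p_i)}L=t_i(Q'_i)\). Choose a valuation ring \(W\) of \(L\) dominating \((A[n]/Q'_i)_{Q''_{i+1}}\). Its residue field contains \(\kappa(Q''_{i+1})\), so \(\operatorname{trdeg}_{\kappa(\mathfrak p_{i+1})}\kappa(W)\ge t_{i+1}(Q''_{i+1})\). Put \(V=W\cap\kappa(\mathfrak p_i)\). This \(V\) dominates \((A/\mathfrak p_i)_{\mathfrak p_{i+1}}\), so \(\operatorname{trdeg}_{\kappa(\mathfrak p_{i+1})}\kappa(V)\le\tau\). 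Applying \eqref{eq:residue-extension} to \(\kappa(\mathfrak p_i)\subseteq L\) gives
\[
t_{i+1}(Q''_{i+1})\le\tau+t_i(Q'_i),
\]
and trivially \(t_{i+1}(Q''_{i+1})\le n\).
\end{enumerate}
Summing the fiber counts telescopes. The total length is \(r\) plus the sum of fiber lengths, and the fiber lengths are bounded by sums of the increments \(t_i(Q''_i)-t_i(Q'_i)\), starting from \(t_0(Q_0)=n\). Carrying out this bookkeeping carefully should give \(s\le n+r+\sum\min(n,\tau_i)\). This telescoping is the step I expect to be most delicate: a negative increment can lengthen a later fiber, and the cap \(\min(n,\cdot)\) must be shown to absorb exactly that.

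\emph{Lower bound.} Fix a flag and integers \(m_i\le\min(n,\tau_i)\), each \(m_i\) finite. Choose valuation rings \(V_i\) of \(\kappa(\mathfrak p_i)\) dominating \((A/\mathfrak p_i)_{\mathfrak p_{i+1}}\) whose residue fields contain elements \(\bar y_1,\dots,\bar y_{m_i}\) algebraically independent over \(\kappa(\mathfrak p_{i+1})\). Lift these to \(y_j\in V_i\), write \(y_j=a_j/b\) with \(a_j,b\in A/\mathfrak p_i\) a common denominator, and use the prime \(P_i=\ker\bigl(A/\mathfrak p_i[n]\to\kappa(\mathfrak p_i),\ X_j\mapsto y_j\bigr)\) for \(j\le m_i\), with the remaining \(X_j\) sent to \(0\). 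Then \(P_i\) is a prime over \(\mathfrak p_i\) of fiber height \(n\). Contract the maximal ideal of \(V_i[X]\) to get a prime over \(\mathfrak p_{i+1}\) containing \(P_i\) whose fiber transcendence degree is \(m_i\). The fiber height is therefore \(n-m_i\), so within the fiber over \(\mathfrak p_{i+1}\) one can climb \(m_i\) steps to a maximal fiber prime. Concatenating these segments, plus \(n\) steps in the generic fiber at the start, gives a chain of length \(n+r+\sum m_i\). To make the segments compose, the fiber-maximal prime reached over \(\mathfrak p_{i+1}\) must be the starting point \(P_{i+1}\) of the next jump. This is handled by choosing \(P_{i+1}\) first, as the kernel of the evaluation along \(V_{i+1}\), and noting that any fiber-maximal prime can be moved to it by an automorphism of the fiber polynomial ring over \(\kappa(\mathfrak p_{i+1})\) after clearing denominators. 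Alternatively, one works entirely in the localized chain of rings, where primes over \(\mathfrak p_{i+1}\) can be specified through the ring \(V_i[n]\). Finally, finite dimensionality of \(A\) gives \(r\le\dim A\), so the maximum in \eqref{eq:capacity-formula} is attained.
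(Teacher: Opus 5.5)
Your upper bound is the paper's argument. You group the chain by contractions, bound each block by the fiber height formula, and control each jump $Q'_i\subsetneq Q''_{i+1}$ by a dominating valuation restricted to $\kappa(\mathfrak p_i)$. The bookkeeping you flag as delicate is immediate. Your two steps give
\[
 s\le r+\bigl(t_0(Q''_0)-t_r(Q'_r)\bigr)
 +\sum_{i=0}^{r-1}\bigl(t_{i+1}(Q''_{i+1})-t_i(Q'_i)\bigr).
\]
The first bracket is at most $n$. Each summand is at most $\tau_A(\mathfrak p_i,\mathfrak p_{i+1})$ by your step (2), and at most $n$ because all the $t$'s lie in $[0,n]$. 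Hence each summand is at most $\min(n,\tau_A(\mathfrak p_i,\mathfrak p_{i+1}))$. Negative summands only help, since the caps are nonnegative, so nothing has to be absorbed. The paper writes the same estimate in fiber heights as $\max(0,\beta_i-\alpha_{i+1})$.

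The genuine gap is in the lower bound, at the junctions between edges. With your choice $X_j\mapsto0$ for $j>m_i$, the prime $Q_{i+1}$ reached after the jump has fiber $(X_{m_i+1},\dots,X_n)$ in $\kappa(\mathfrak p_{i+1})[n]$, so it contains $X_{m_i+1}$. If $m_{i+1}>m_i$, then $P_{i+1}$ sends $X_{m_i+1}$ to a unit lift $y'_{m_i+1}\neq0$, so no prime above $Q_{i+1}$ equals $P_{i+1}$. This case does occur. For $n\ge1$, the flag $0\subsetneq(x)\subsetneq(x,y,z)$ in $\mathbb Q[x,y,z]_{(x,y,z)}$ has capacities $0$ and $1$, forcing $m_0=0<m_1=1$.

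Your proposed repair does not work as stated. An automorphism of $\kappa(\mathfrak p_{i+1})[n]$ does not act on $A[n]$, nor on the part of the chain already built over $\mathfrak p_0,\dots,\mathfrak p_i$. Also, a fiber-maximal prime with nonrational residue field cannot be moved onto the rational point defining $P_{i+1}$ at all. The argument can be rescued in either of two ways:
\begin{itemize}
\item End each climb at a rational point, and transport the whole lower chain by a translation $X_j\mapsto X_j+a_j$ with $a_j\in A_{\mathfrak p_{i+1}}$. This is an automorphism of $A_{\mathfrak p_{i+1}}[n]$ in which all earlier members survive.
\item Fix the evaluation points from the top flag edge downward, sending $X_j$ for $j>m_i$ to elements of $(A/\mathfrak p_i)_{\mathfrak p_{i+1}}$ whose residues are the coordinates of the next point.
\end{itemize}

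The paper sidesteps the matching problem by reversing your order within each edge. Starting from $\mathfrak p_i[n]$, it climbs the fiber over $\mathfrak p_i$ through the kernels of $X_1\mapsto u_1,\dots,X_j\mapsto u_j$ for $0\le j\le h_i$. Reduction in the valuation ring shows the last kernel lies strictly inside $\mathfrak p_{i+1}[n]$. The $n$ variable primes are appended only at the top. Every junction is then an extended prime $\mathfrak p_{i+1}[n]$, so there is nothing to match.
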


\begin{proof}
Fix a flag and put
\(h_i=\min(n,\tau_A(\mathfrak p_i,\mathfrak p_{i+1}))\).
Choose a dominating valuation whose residue field contains \(h_i\)
elements algebraically independent over \(\kappa(\mathfrak p_{i+1})\),
and choose unit lifts \(u_1,\ldots,u_{h_i}\). Only a finite tuple is
needed, even for infinite capacity. In \((A/\mathfrak p_i)[n]\),
evaluate the first \(j\) variables at \(u_1,\ldots,u_j\), leaving
the others indeterminate. The kernels, for \(0\leq j\leq h_i\),
are a strict prime chain contracting to zero. Indeed, writing
\(u_j=b_j/a_j\) in \(\kappa(\mathfrak p_i)\), the polynomial
\(a_jX_j-b_j\) belongs to the \(j\)-th kernel and not the previous one.
Reduction in the valuation ring shows that every kernel lies in
\((\mathfrak p_{i+1}/\mathfrak p_i)[n]\): independence of the chosen
residues forces every coefficient of an evaluation identity to vanish
modulo \(\mathfrak p_{i+1}/\mathfrak p_i\). The final containment is
strict, since its nonzero constants are absent from the kernels.
Lifting and concatenating gives \(h_i+1\) inclusions along each flag
edge. Append the \(n\) variable primes above \(\mathfrak p_r[n]\).
This proves the lower bound.

For the upper bound, prepend zero to any finite prime chain in \(A[n]\)
and group its primes by their distinct contractions
\(0=\mathfrak p_0\subsetneq\cdots\subsetneq\mathfrak p_r\).
In the \(i\)-th block let \(\alpha_i,\beta_i\) be the heights of
its first and last primes after quotienting by \(\mathfrak p_i[n]\)
and passing to \(\kappa(\mathfrak p_i)[n]\). These heights lie
between zero and \(n\), and \(\alpha_0=0\). If the chain has length
\(L\), then
\begin{equation}\label{eq:chain-telescope}
 L\leq r+\sum_{i=0}^r(\beta_i-\alpha_i)
 \leq r+n+\sum_{i=0}^{r-1}\max(0,\beta_i-\alpha_{i+1}).
\end{equation}
Let \(P\) be the last prime of block \(i\) and \(Q\) the first
prime of block \(i+1\). A valuation ring \(W\) of
\(E=\operatorname{Frac}(A[n]/P)\) dominates
\((A[n]/P)_{Q/P}\). Its restriction \(V\) to
\(K=\kappa(\mathfrak p_i)\) dominates
\((A/\mathfrak p_i)_{\mathfrak p_{i+1}/\mathfrak p_i}\).
The dominated residue field \(\operatorname{Frac}(A[n]/Q)\)
embeds in \(\kappa(W)\). By \eqref{eq:field-height},
\[
 \operatorname{trdeg}_K E=n-\beta_i,
 \qquad
 \operatorname{trdeg}_{\kappa(\mathfrak p_{i+1})}
       \operatorname{Frac}(A[n]/Q)=n-\alpha_{i+1}.
\]
The residue-extension inequality and the transcendence-degree tower
formula give
\[
 \operatorname{trdeg}_{\kappa(\mathfrak p_{i+1})}\kappa(V)
       \geq\beta_i-\alpha_{i+1}.
\]
If the left side is infinite the assertion is immediate; otherwise
all terms needed in this comparison are finite. Hence
\[
 \max(0,\beta_i-\alpha_{i+1})
 \leq\min\bigl(n,\tau_A(\mathfrak p_i,\mathfrak p_{i+1})\bigr).
\]
Substitute in \eqref{eq:chain-telescope}. Finally \(r\leq\dim A\)
and each summand is at most \(n\), so the integer flag values are
bounded and their supremum is a maximum.
\end{proof}

\begin{proposition}[Necessary numerical inequalities]\label{prop:necessary}
Let \(A\) be a nonzero ring of finite dimension \(d\), and put
\(a_n=\dim A[n]\). Then
\begin{equation}\label{eq:parker-necessary}
 a_n+1\leq a_{n+1}\leq
 a_n+\left\lfloor\frac{a_n+1}{n+1}\right\rfloor
 \qquad(n\geq0).
\end{equation}
If \(A\) is Krull and \(d\geq1\), then additionally
\begin{equation}\label{eq:krull-first-bound}
 a_1\leq2d.
\end{equation}
For \(d=0\) one has \(a_n=n\).
\end{proposition}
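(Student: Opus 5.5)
The plan is to treat the Parker inequalities \eqref{eq:parker-necessary} as classical and to spend the real work on the Krull bound \eqref{eq:krull-first-bound}, using the prime-flag formula of \cref{thm:capacity}.

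\textbf{Parker inequalities and the case \(d=0\).} I do not plan to reprove \eqref{eq:parker-necessary}. It is exactly the necessity half of Parker's characterization \cite{Parker1975}, which in turn rests on Seidenberg's bounds \cite{Seidenberg1953} and the Arnold--Gilmer analysis \cite{ArnoldGilmer1973,ArnoldGilmer1974}; I will cite it.

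If one wants to see where it comes from, the first step is the reduction to domains. Every prime of \(A[n]\) contains \(\mathfrak p[n]\) for some minimal prime \(\mathfrak p\) of \(A\), so
\[
\dim A[n]=\max_{\mathfrak p}\dim (A/\mathfrak p)[n],
\]
and it suffices to know the inequalities for domains. The lower bound \(a_{n+1}\ge a_n+1\) is immediate: append \((P,X_{n+1})\) to a maximal chain of primes \(P\) in \(A[n]\), viewed inside \(A[n+1]\).

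For \(d=0\), each \(A/\mathfrak p\) is a field, so \(\dim (A/\mathfrak p)[n]=n\) and hence \(a_n=n\).

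\textbf{The Krull bound.} Let \(A\) be a Krull domain with \(d\ge1\). Apply \cref{thm:capacity} with \(n=1\):
\[
a_1=1+\max_{\text{flags}}\Bigl\{r+\sum_{i=0}^{r-1}\min\bigl(1,\tau_A(\mathfrak p_i,\mathfrak p_{i+1})\bigr)\Bigr\}.
\]
Each summand is at most \(1\), so a flag of length \(r\) contributes at most \(2r\). I split into two cases.

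\emph{Case \(r\le d-1\).} The contribution is at most \(2d-2\).

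\emph{Case \(r=d\).} A strict chain \(0=\mathfrak p_0\subsetneq\mathfrak p_1\subsetneq\cdots\subsetneq\mathfrak p_d\) of length \(d=\dim A\) forces \(\operatorname{ht}\mathfrak p_1=1\). The key claim is that \(\tau_A(0,\mathfrak p_1)=0\). Since \(A\) is Krull, \(A_{\mathfrak p_1}\) is a DVR. A valuation ring of \(\operatorname{Frac}A\) dominating \(A_{\mathfrak p_1}\) must equal \(A_{\mathfrak p_1}\), because a DVR is a maximal proper subring of its fraction field among local rings it is dominated by. Its residue field is therefore \(\kappa(\mathfrak p_1)\), which has transcendence degree \(0\) over \(\kappa(\mathfrak p_1)\). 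So the first summand vanishes and the flag contributes at most \(d+(d-1)=2d-1\).

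In both cases the maximum is at most \(2d-1\), which gives \(a_1\le 2d\).

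\textbf{Main point.} The only step needing care is the claim that a DVR is the unique valuation ring of its fraction field dominating it. I would justify it as follows. Let \(V\) dominate \(A_{\mathfrak p_1}\) with uniformizer \(t\). Any \(x\in V\setminus A_{\mathfrak p_1}\) can be written \(x=u t^{-k}\) with \(u\) a unit of \(A_{\mathfrak p_1}\) and \(k\ge1\). Then \(t^{-1}\in V\). But \(t\) lies in the maximal ideal of \(V\), since \(V\) dominates \(A_{\mathfrak p_1}\), so \(t^{-1}\notin V\), a contradiction.

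Everything else is bookkeeping with \cref{thm:capacity}.
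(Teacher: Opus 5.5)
Your proof is correct, but it takes a different route from the paper on both halves.

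\textbf{The inequalities \eqref{eq:parker-necessary}.} You cite Parker for the upper bound. That is legitimate, since the paper itself records Parker's characterization in the introduction. The paper, however, makes the proposition self-contained by deriving the bound from \cref{thm:capacity}. For a domain, $e_n=a_n-n$ is a maximum of functions $h_n=\sum_{i}(1+\min(n,t_i))$. Each is nondecreasing, and $h_n/(n+1)$ is nonincreasing, so $h_n\le h_{n+1}\le h_n+\lfloor h_n/(n+1)\rfloor$. This survives maxima because $x\mapsto x+\lfloor x/(n+1)\rfloor$ is nondecreasing. The minimal-prime reduction you describe, together with the block count $\dim A[n]\le(d+1)(n+1)-1$, then handles arbitrary rings, finiteness of the $a_n$, and the case $d=0$. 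Since you already invoke \cref{thm:capacity} for the Krull part, this derivation would cost you only a few lines.

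\textbf{The Krull bound \eqref{eq:krull-first-bound}.} You apply \cref{thm:capacity} at $n=1$ and kill the first edge of a length-$d$ flag using $\tau_A(0,\mathfrak p_1)=0$. This is exactly the fact recorded in \cref{rem:krull-preliminaries}. Your slogan about maximality is misphrased, but the explicit uniformizer argument justifying it is correct.

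The paper instead argues directly on chains. A chain of length $2d+1$ in $A[X]$ would need $d+1$ contraction blocks, each doubled, lying over a length-$d$ chain that starts at $0$. Its second contraction $\mathfrak p$ has height one, and the first four primes survive in $A_{\mathfrak p}[X]$. This contradicts $\dim A_{\mathfrak p}[X]=2$ for a DVR.

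\textbf{Comparison.} Both arguments rest on the same point: the height-one localization is a DVR, so the first edge of a maximal-length flag cannot gain residue transcendence. The paper's version needs only fiber dimension and the dimension of a polynomial ring over a DVR. It is therefore independent of the upper-bound half of \cref{thm:capacity}. Yours is shorter given that theorem, and it matches the capacity bookkeeping the paper uses later, e.g.\ in \cref{cor:two-dimensional-minimal}.
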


\begin{proof}
For a domain the prime-flag formula expresses \(e_n=a_n-n\) as a
maximum of functions \(h_n=r+\sum_i\min(n,t_i)\). Each is
nondecreasing, and \(h_n/(n+1)\) is nonincreasing, since each
\((1+\min(n,t_i))/(n+1)\) is. Thus
\[
 h_n\leq h_{n+1}\leq h_n+\left\lfloor\frac{h_n}{n+1}\right\rfloor.
\]
Taking maxima preserves this inequality because its right-hand function
of \(h_n\) is nondecreasing. Substitution of \(e_n=a_n-n\) proves
\eqref{eq:parker-necessary} for domains.

For a general ring,
\begin{equation}\label{eq:minimal-prime-dimensions}
 \dim A[n]=\sup_{\mathfrak p\in\operatorname{Min}A}
                    \dim(A/\mathfrak p)[n].
\end{equation}
Indeed, the contraction of the first member of a finite polynomial
prime chain contains a minimal prime \(\mathfrak p\); the entire
chain then survives modulo \(\mathfrak p[n]\). The reverse inequality
is immediate. These dimensions are finite: grouping a chain by
contractions gives at most \(d+1\) blocks, each with at most \(n+1\)
members, and hence
\begin{equation}\label{eq:seidenberg-many}
 d+n\leq\dim A[n]\leq(d+1)(n+1)-1.
\end{equation}
The lower bound extends a length-\(d\) base chain and appends the
variable primes. Applying the already proved domain inequalities to
\eqref{eq:minimal-prime-dimensions} proves
\eqref{eq:parker-necessary}; the same bound gives the assertion for
\(d=0\).

Now suppose \(A\) is a positive-dimensional Krull domain. A
one-variable chain has at most two primes in each contraction block.
If its length were \(2d+1\), the bound just proved would force exactly
\(d+1\) blocks, all doubled. Their contractions form a length-\(d\)
chain starting at zero. The second contraction \(\mathfrak p\) has
height one, since an intermediate nonzero prime would make a base
chain of length \(d+1\). The first four polynomial primes remain
distinct after localization at \(\mathfrak p\). They would give
a length-three chain in \(A_{\mathfrak p}[X]\), whereas
\(A_{\mathfrak p}\) is a DVR and its polynomial ring has dimension
two. This contradiction proves \eqref{eq:krull-first-bound}.
\end{proof}

\begin{remark}\label{rem:krull-preliminaries}
A Krull domain is the intersection, in its fraction field, of its
height-one DVR localizations, and every nonzero element is a nonunit
in only finitely many of them. We use this finite-character
representation, its preservation under localization and polynomial
extension, and the criterion that a Krull domain is a UFD precisely
when every height-one prime is principal \cite{Gilmer1972}.
For a height-one prime \(\mathfrak p\),
\(\tau_A(0,\mathfrak p)=0\): a valuation ring of the same field
dominating the DVR \(A_{\mathfrak p}\) equals that DVR. Indeed its
units have value zero and a uniformizer has positive value, which
determines membership for every fraction.
Applying \eqref{eq:krull-first-bound} to \(A[n]\) gives
\(a_{n+1}\leq2a_n\) whenever \(a_n>0\). For \(n\geq1\), however,
this already follows from \eqref{eq:parker-necessary}; only the first
bound strengthens the unrestricted numerical conditions.
\end{remark}

\begin{remark}[Valuative dimension]\label{rem:jaffard}
The valuative dimension \(\dim_{\mathrm v}A\) is the supremum of
the dimensions of valuation overrings of \(A\). Jaffard's dimension
theorems give
\[
 \dim_{\mathrm v}A=\sup_{n\geq0}(\dim A[n]-n),\qquad
 \dim_{\mathrm v}A[n]=\dim_{\mathrm v}A+n
\]
\cite[Ch.~IV, Theorems~2, 4, and~5]{Jaffard1960}.
If \(\dim_{\mathrm v}A=v<\infty\), then
\(\dim A[n]=n+v\) for all sufficiently large \(n\). A
finite-dimensional domain is called \emph{Jaffard} if
\(\dim_{\mathrm v}A=\dim A\), equivalently if
\(\dim A[n]=\dim A+n\) for every \(n\). These classical valuative
statements are used only to interpret the consequences for Bouvier's
conjecture; the sequence classification uses the proved prime-flag
formula directly.
\end{remark}

\section{Numerical reduction}
\label{sec:numerical}

For \(r\ge0\) and \(t_1,\ldots,t_r\in\mathbb N_0\cup\{\infty\}\), put
\begin{equation}\label{eq:basic-unrestricted}
 F_{r,\mathbf t}(n)=n+r+\sum_{i=1}^r\min(n,t_i).
\end{equation}
The empty list gives \(F_{0,\varnothing}(n)=n\).

\begin{proposition}\label{prop:envelope}
A sequence of integers with \(a_0\ge0\) satisfies \eqref{eq:P}
if and only if it is the pointwise maximum of finitely many
profiles \eqref{eq:basic-unrestricted}.
\end{proposition}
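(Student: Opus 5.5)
The plan is to pass to the excess sequence \(e_n=a_n-n\) and to describe the profiles \eqref{eq:basic-unrestricted} intrinsically. Then the \emph{if} direction is what was already checked in the proof of \cref{prop:necessary}, and the \emph{only if} direction becomes a combinatorial envelope statement.

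\textbf{Reformulation.} Since
\[
 \left\lfloor\frac{a_n+1}{n+1}\right\rfloor-1=\left\lfloor\frac{e_n}{n+1}\right\rfloor ,
\]
condition \eqref{eq:P} is equivalent to \(0\le e_{n+1}-e_n\le\lfloor e_n/(n+1)\rfloor\). Because the \(e_n\) are integers, this is in turn equivalent to two conditions: \(e\) is nondecreasing and \(e_n/(n+1)\) is nonincreasing.

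For a profile \(h(n)=F_{r,\mathbf t}(n)-n=r+\sum_i\min(n,t_i)\), the successive differences are
\[
 h(n)-h(n-1)=\#\{i:t_i\ge n\}.
\]
So the profiles \(F_{r,\mathbf t}\) are exactly the sequences \(b_n=n+h(n)\) with \(b_0=r\ge0\), whose differences are positive and nonincreasing, and with \(b_1-b_0\le b_0+1\). Conversely, any such \(b\) is a profile: put \(r=b_0\) and let the multiplicities of the \(t_i\) be read off from the jumps of the difference sequence \(b_{n}-b_{n-1}-1\), with \(t_i=\infty\) for the eventual constant value. This dictionary is routine and I would verify it first.

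\textbf{The \emph{if} direction.} Each \(h=F_{r,\mathbf t}-n\) is nondecreasing and has \(h(n)/(n+1)\) nonincreasing, exactly as computed in the proof of \cref{prop:necessary}. The resulting inequality \(h_n\le h_{n+1}\le h_n+\lfloor h_n/(n+1)\rfloor\) is preserved under finite pointwise maxima, because its right-hand side is nondecreasing in \(h_n\). Hence every finite maximum of profiles satisfies \eqref{eq:P}.

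\textbf{The \emph{only if} direction.} Here I would use the classical results quoted in the introduction. By Parker's theorem \cite{Parker1975}, a sequence satisfying \eqref{eq:P} with \(a_0\ge0\) is the dimension sequence of some nonzero ring. By Arnold and Gilmer \cite{ArnoldGilmer1973,ArnoldGilmer1974}, every such dimension sequence is a finite coordinatewise maximum of sequences \((b_n)\) with positive nonincreasing differences and \(b_1-b_0\le b_0+1\). By the dictionary above, these \(b\) are precisely the profiles \(F_{r,\mathbf t}\). This completes the argument.

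\textbf{A self-contained alternative.} If a proof avoiding the citations is wanted, I would argue as follows. For each \(N\), construct finitely many profiles \(h\le e\) with \(h(N)=e_N\), using the lower bound \(e_n\ge\lceil e_N(n+1)/(N+1)\rceil\) for \(n\le N\) and \(h\) constant for \(n\ge N\).

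\begin{itemize}
\item The main obstacle is finiteness. Since \(e_n/(n+1)\) decreases to a limit \(c\) and the increments are bounded integers, I would try to show that \(e_n\) is eventually an affine function \(c(n+1)+\text{const}\) with integer slope. Then a single profile with \(t_i=\infty\) entries covers the tail, and only finitely many \(N\) are needed.
\item A second subtlety is that the integer concave minorant of \(n\mapsto e_N(n+1)/(N+1)\) need not be a single profile. This is why several profiles per \(N\) may be required.
\end{itemize}

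Because of these two points, I would rely on the Parker and Arnold--Gilmer route above.
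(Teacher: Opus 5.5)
Your proof is correct, and your reformulation, your dictionary between profiles and basic sequences, and your \emph{if} direction all match the paper. The difference is the \emph{only if} direction. There you route a purely numerical statement through ring theory: Parker produces a ring, and Arnold--Gilmer decomposes its dimension sequence. Combined with your dictionary, that is essentially the statement itself, so it becomes a consequence of the two cited theorems rather than something you prove.

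The paper proves it directly, and both obstacles you flagged for the self-contained version disappear there.

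\emph{One support per index suffices.} Write $e_N=q(N+1)+b$ with $0\le b\le N$, and take $h^N_m=q(\min(m,N)+1)+\min(\min(m,N)+1,b)$. This is the excess of the profile with $q$ caps equal to $N$ and, if $b>0$, one cap $b-1$. It lies below $e$ because the paper iterates the one-step bound $e_m\ge\lceil\frac{m+1}{m+2}e_{m+1}\rceil$ backwards from $N$, using the exact identity $\lceil\frac{m+1}{m+2}[q(m+2)+\min(m+2,b)]\rceil=q(m+1)+\min(m+1,b)$.

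\emph{Why several profiles seemed necessary.} The direct bound $\lceil e_N(n+1)/(N+1)\rceil$ you proposed is strictly weaker than the iterated one. For $N=3$, $e_3=2$ it gives $(1,1,2,2)$, which itself violates the recurrence. Iteration gives $(1,2,2,2)$, which is a profile.

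\emph{Finiteness.} The integers $k_n=\lfloor e_n/(n+1)\rfloor$ are nonnegative and nonincreasing, so they stabilize at some $k$. After that the increments are at most $k$, so $e_n-k(n+1)$ is a nonincreasing nonnegative integer sequence and is eventually a constant $b$. The profile with $k$ infinite caps and, if $b>0$, one cap $b-1$ agrees with $h^N$ at each fixed $m$ for large $N$, hence lies below $e$. It equals $e$ on the tail, so only finitely many $h^N$ are needed besides it.

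The paper's route is elementary and self-contained, gives explicit supports, and yields the eventual affine form used in \cref{cor:growth}. Yours is shorter but rests entirely on the external theorems.
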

\begin{proof}
Set \(e_n=a_n-n\). The recurrence is equivalent to
\begin{equation}\label{eq:excess-recurrence}
 e_n\le e_{n+1}\le e_n+\left\lfloor\frac{e_n}{n+1}\right\rfloor.
\end{equation}
For a basic profile its excess is \(r+\sum_i\min(n,t_i)\).
It is nondecreasing, and its quotient by \(n+1\) is nonincreasing,
since each \((1+\min(n,t_i))/(n+1)\) is nonincreasing.
Thus it satisfies \eqref{eq:excess-recurrence}. Finite maxima
preserve that recurrence because
\(x\mapsto x+\lfloor x/(n+1)\rfloor\) is increasing.

Conversely, for each \(N\ge0\), write
\(e_N=q(N+1)+b\), with \(0\le b\le N\), and set
\begin{equation}\label{eq:finite-support}
 h^N_m=q(\min(m,N)+1)+\min(\min(m,N)+1,b).
\end{equation}
This is a basic excess: take \(q\) caps equal to \(N\) and,
if \(b>0\), one additional cap \(b-1\). It agrees with \(e_N\)
at \(N\). For \(m\ge N\), monotonicity gives \(h^N_m\le e_m\).
For \(m<N\), use backwards induction and
\[
 e_m\ge\left\lceil\frac{m+1}{m+2}e_{m+1}\right\rceil,
 \qquad
 \left\lceil\frac{m+1}{m+2}
 [q(m+2)+\min(m+2,b)]\right\rceil
 =q(m+1)+\min(m+1,b).
\]
Hence \(h^N_m\le e_m\) for every \(m\).

The nonnegative integers \(k_n=\lfloor e_n/(n+1)\rfloor\)
are nonincreasing, so eventually equal \(k\). On that tail,
\(e_{n+1}-e_n\le k\), so the nonnegative integers
\(b_n=e_n-k(n+1)\) satisfy
\(b_{n+1}-b_n=e_{n+1}-e_n-k\le0\) and eventually equal a
constant \(b\). Therefore
\[
 e_n=k(n+1)+b\quad(n\gg0),\qquad
 h^\infty_m=k(m+1)+\min(m+1,b)
\]
defines a basic excess with \(k\) infinite caps and, if \(b>0\),
one cap \(b-1\). For fixed \(m\), it equals \(h^N_m\) for all
sufficiently large \(N\), so it lies below \(e_m\). It agrees
with \(e_m\) eventually. Adjoining the finitely many supports
\eqref{eq:finite-support} for the remaining indices gives \(e\)
as their maximum. Adding \(m\) proves the assertion.
\end{proof}

The profiles needed for factorial realization are
\begin{align}
 O_{r,\mathbf t}(n)&=n+r+1+\sum_{i=1}^r\min(n,t_i),
 &&r\ge0,\label{eq:ordinary-profile}\\
 B_{r,\mathbf t}(n)&=n+\max\left\{r+1,
 r+\sum_{i=1}^r\min(n,t_i)\right\},
 &&r\ge1.\label{eq:delayed-profile}
\end{align}

\begin{corollary}\label{cor:krull-envelope}
If \(a_0=d\ge1\) and \eqref{eq:P}--\eqref{eq:K} hold,
then \(a\) is a finite maximum of ordinary profiles
\eqref{eq:ordinary-profile} and delayed profiles
\eqref{eq:delayed-profile}. Delayed profiles with a zero cap can
be replaced by ordinary profiles and a dimension baseline.
\end{corollary}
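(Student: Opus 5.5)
The plan is to start from \cref{prop:envelope}, which writes \(a\) as a finite pointwise maximum of unrestricted profiles \(F_{r,\mathbf t}\) from \eqref{eq:basic-unrestricted}. Each such profile lies pointwise below \(a\). We then replace each \(F_{r,\mathbf t}\) by one or two ordinary or delayed profiles that still lie below \(a\) and dominate \(F_{r,\mathbf t}\). The resulting finite maximum is then squeezed between \(a\) and itself.

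\emph{Baseline and small cases.} By \eqref{eq:P} we have \(a_n\ge a_0+n=d+n\). The ordinary profile
\[
 O_{d-1,(0,\ldots,0)}(n)=n+d
\]
therefore lies below \(a\); this is the ``dimension baseline'' of the statement, and we always include it. A profile \(F_{0,\varnothing}(n)=n\) is dominated by the baseline, since \(d\ge1\), so it can be dropped.

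\emph{Profiles with a zero cap.} Since \(F_{r,\mathbf t}(0)=r\le a_0=d\), we have \(r\le d\) for every profile that occurs. If some cap is zero, say \(t_1=0\), then
\[
 F_{r,\mathbf t}(n)=n+r+\sum_{i\ge2}\min(n,t_i)=O_{r-1,\mathbf t'}(n),
\]
where \(\mathbf t'\) is \(\mathbf t\) with \(t_1\) removed. So such a profile is already ordinary.

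\emph{Profiles with all caps positive.} This is the only place where \eqref{eq:K} enters, and it is the key step. Suppose \(r\ge1\) and all \(t_i\ge1\). Then \(F_{r,\mathbf t}(1)=1+2r\). If \(r=d\), this equals \(2d+1>a_1\), which contradicts \(F_{r,\mathbf t}\le a\). Hence \(r\le d-1\). In that case I compare with the delayed profile \(B_{r,\mathbf t}\):
\begin{enumerate}
\item At \(n=0\) we have \(B_{r,\mathbf t}(0)=r+1\le d=a_0\).
\item For \(n\ge1\), every summand \(\min(n,t_i)\) is at least \(1\), so \(\sum_i\min(n,t_i)\ge r\ge1\). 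The maximum in \eqref{eq:delayed-profile} is then attained by its second entry, and \(B_{r,\mathbf t}(n)=F_{r,\mathbf t}(n)\le a_n\).
\end{enumerate}
Thus \(F_{r,\mathbf t}\le B_{r,\mathbf t}\le a\), and replacing \(F_{r,\mathbf t}\) by \(B_{r,\mathbf t}\) does not change the maximum. This proves the first assertion.

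\emph{Delayed profiles with a zero cap.} Suppose \(t_1=0\) in a delayed profile. Then
\[
 B_{r,\mathbf t}(n)=\max\bigl(n+r+1,\;O_{r-1,\mathbf t'}(n)\bigr),
\]
with \(\mathbf t'\) as before. If \(B_{r,\mathbf t}\le a\), then \(r+1\le d\). Hence \(n+r+1\le n+d\), so the first term is dominated by the baseline \(O_{d-1,(0,\ldots,0)}\). The delayed profile can therefore be replaced by the ordinary profile \(O_{r-1,\mathbf t'}\) together with the baseline.

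The only non-bookkeeping point is the use of \eqref{eq:K} to exclude \(r=d\) with all caps positive; everything else is direct comparison of the profile formulas.
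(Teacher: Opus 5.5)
Your proof is correct and is essentially the paper's argument. You start from \cref{prop:envelope}, bound $r\le d$ using $a_0$, apply \eqref{eq:K} at $n=1$ to exclude $r=d$ with all caps positive, and absorb the $n+r+1$ term of a delayed profile into the baseline $n+d$. The only difference is bookkeeping: the paper splits by $r=d$ versus $r<d$, replacing every $r<d$ support by $B_{r,\mathbf t}$ modulo the baseline and then removing zero caps, whereas you split by whether a zero cap is present, so your delayed profiles already have all caps positive.
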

\begin{proof}
In a supporting basic profile \(F_{r,\mathbf t}\le a\), one
has \(r\le d\). If \(r=d\), then
\(d+1+\#\{i:t_i>0\}=F_{d,\mathbf t}(1)\le2d\); deleting
one zero cap gives an ordinary profile. If \(r<d\), then
\[
 \max\{n+d,F_{r,\mathbf t}(n)\}
 =\max\{n+d,B_{r,\mathbf t}(n)\}.
\]
The baseline is below \(a_n\) by \eqref{eq:P}, and a support
with \(r=0\) is redundant. Finally, if a delayed profile has
at most \(r-1\) positive caps, pad them with zeros to length
\(r-1\). Its formula is the maximum of the resulting ordinary
profile \(n+r+\sum_i\min(n,t_i)\) and \(n+r+1\).
\end{proof}

\begin{corollary}\label{cor:growth}
Every unrestricted profile with initial dimension \(d\) is
eventually affine with integer slope in \(\{1,\ldots,d+1\}\).
For \(d\ge1\), under \eqref{eq:K} the slope lies in \(\{1,\ldots,d\}\) and
\(d+n\le a_n\le d(n+1)\).
\end{corollary}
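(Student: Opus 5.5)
Everything here is a consequence of \cref{prop:envelope} and \cref{cor:krull-envelope}. We argue on the excess \(e_n=a_n-n\) and never need the ring-theoretic input again. The first claim is about a general unrestricted profile, meaning a sequence satisfying \eqref{eq:P} with \(a_0=d\). By the last paragraph of the proof of \cref{prop:envelope}, the integers \(k_n=\lfloor e_n/(n+1)\rfloor\) are nonincreasing and eventually constant, equal to some \(k\). In addition, \(e_n=k(n+1)+b\) for \(n\gg0\), with a constant \(0\le b\). Hence \(a_n=(k+1)n+(k+b)\) eventually, which is affine with integer slope \(s=k+1\ge1\). Since \(k\le k_0=e_0=d\), we get \(s\le d+1\), which proves the first sentence.

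For the second sentence, assume \(d\ge1\) and \eqref{eq:K}. The lower bound \(a_n\ge d+n\) comes from iterating the left inequality of \eqref{eq:P}. For the upper bound we use the same monotonicity, but starting at \(n=1\) instead of \(n=0\). The quantity \(e_n/(n+1)\) need not be monotone for a general sequence. However, \eqref{eq:excess-recurrence} gives
\[
 \frac{e_{n+1}}{n+2}\le\frac{e_n+e_n/(n+1)}{n+2}=\frac{e_n}{n+1},
\]
so \(e_n/(n+1)\) is nonincreasing. For \(n\ge1\) this yields \(e_n\le (n+1)e_1/2\). Under \eqref{eq:K} we have \(e_1=a_1-1\le 2d-1\), so
\[
 a_n\le n+\frac{(n+1)(2d-1)}{2}.
\]
This bound is not yet \(d(n+1)\), so a sharper argument is needed.

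The cleaner route is \cref{cor:krull-envelope}, which writes \(a\) as a finite maximum of \(n+d\), ordinary profiles \(O_{r,\mathbf t}\) and delayed profiles \(B_{r,\mathbf t}\) with supports \(\le a\). It then suffices to bound each piece by \(d(n+1)\) and to show each has slope at most \(d\). For an ordinary profile, \(O_{r,\mathbf t}(0)=r+1\le d\), so \(r\le d-1\). Then
\[
 O_{r,\mathbf t}(n)\le n+r+1+rn=(r+1)(n+1)\le d(n+1),
\]
and its eventual slope is \(1+\#\{i:t_i=\infty\}\le r+1\le d\). For a delayed profile, \(B_{r,\mathbf t}(0)=r+1\le d\) as well, and \(B_{r,\mathbf t}(n)\le n+\max\{r+1,r+rn\}\le(r+1)(n+1)\le d(n+1)\), with eventual slope at most \(r+1\le d\). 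The baseline \(n+d\) has slope \(1\) and satisfies \(n+d\le d(n+1)\). The maximum of finitely many eventually affine functions is eventually equal to the one of largest slope, so the slope of \(a\) lies in \(\{1,\ldots,d\}\).

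The main point to check is that \cref{cor:krull-envelope} really yields supports satisfying \(r+1\le d\) for both profile types. For ordinary profiles this is the case \(r=d\) with a zero cap deleted, which leaves \(r=d-1\). For delayed profiles it is the case \(r<d\). Once that is confirmed, the remaining steps are just the arithmetic above.
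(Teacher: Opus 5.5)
Your proof is correct. The first sentence is handled exactly as in the paper: stabilization of $k_n$, together with $k\le k_0=e_0=d$. For the Krull bound, however, you take a different route.

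The paper does not use \cref{cor:krull-envelope} at all. It inducts from $a_1\le 2d$, using that $x\mapsto x+\lfloor (x+1)/(n+1)\rfloor$ is nondecreasing and that, for $n\ge1$,
\[
 a_{n+1}\le d(n+1)+\left\lfloor\frac{d(n+1)+1}{n+1}\right\rfloor=d(n+2).
\]
The bound $a_n\le d(n+1)$ then excludes slope $d+1$ by itself.

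Your abandoned first attempt fell short only because you replaced $\lfloor e_n/(n+1)\rfloor$ by $e_n/(n+1)$ before iterating. Keeping the floor at each step is precisely the paper's induction. Even your lossy bound $e_n/(n+1)\le e_1/2\le d-\tfrac12$ already gives $k\le d-1$, and hence the slope claim.

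The detour through \cref{cor:krull-envelope} is nevertheless valid. Your final ``point to check'' is automatic: any function occurring in a finite maximum equal to $a$ lies below $a$. Evaluating at $n=0$ therefore gives $r+1\le a_0=d$ for both ordinary and delayed supports, regardless of how that corollary's proof produced them. Each support is then at most $(r+1)(n+1)\le d(n+1)$.

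The trade-off is that your argument leans on the full envelope machinery of \cref{prop:envelope} and \cref{cor:krull-envelope}. The paper's argument is a two-line consequence of \eqref{eq:P} and \eqref{eq:K} alone. In exchange, yours shows the structural reason for the bound: every support has depth $r\le d-1$, and hence eventual slope $1+\#\{i:t_i=\infty\}\le d$.
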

\begin{proof}
The stabilization in \cref{prop:envelope} gives the affine tail.
The bound \(k\le e_0=d\) gives its first slope range.
For the Krull upper bound, start at \(a_1\le2d\) and, for
\(n\ge1\), use
\[
 a_{n+1}\le d(n+1)+
 \left\lfloor\frac{d(n+1)+1}{n+1}\right\rfloor=d(n+2).
\]
This excludes slope \(d+1\). The lower bound follows by induction
from the left side of \eqref{eq:P}.
\end{proof}

\section{Ordinary profiles with arithmetic control}\label{sec:ordinary}

We call a field \(F\supset\mathbb Q\) \emph{rationally Hilbertian} if
Hilbert irreducibility over \(F\), with finitely many parameters and
finitely many nonzero polynomial exclusions, admits parameter values
in \(\mathbb Q\). The following construction supplies the coefficient
rings required later.

\begin{theorem}\label{thm:ordinary}
Let \(m\ge1\), let \(d_1,\ldots,d_m\) be nonnegative integers, and let
\(t_{j,i}\in\mathbb N_0\cup\{\infty\}\), \(1\le i\le d_j\).
There is a countable semilocal UFD \(C\supset\mathbb Q\), with maximal
ideals \(\mathfrak m_1,\ldots,\mathfrak m_m\), and prime elements
\(\pi_j\in\mathfrak m_j\setminus\bigcup_{k\ne j}\mathfrak m_k\), such that,
writing \(C_j=C_{\mathfrak m_j}\), one has
\begin{equation}\label{eq:ordinary-realization}
 \dim C_j[n]=n+d_j+1+\sum_{i=1}^{d_j}\min(n,t_{j,i})
            =1+\dim(C_j/\pi_jC_j)[n]\qquad(n\ge0).
\end{equation}
Each \(C_j[1/\pi_j]\) is a PID, and \(\operatorname{Frac}C\) has a
discrete valuation \(\nu_j\) dominating \(C_j\), with
\(\nu_j(\pi_j)=1\). The residue field of \(C_j\) is \(\mathbb Q\).
The common fraction field and every nonzero prime quotient field of
every \(C_j[1/\pi_j]\) are rationally Hilbertian.
\end{theorem}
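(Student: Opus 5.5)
The plan is to realize each ordinary profile as a ``DVR-like lift'' of a base domain. If \(\pi_j\) is prime, \(C_j/\pi_jC_j=D_j\), and the only prime of height one below the primes over \(\pi_j\) is \(\pi_jC_j\) with \(\tau(0,\pi_jC_j)=0\), then \cref{thm:capacity} gives
\[
 \dim C_j[n]=1+\dim D_j[n].
\]
So the work splits into two parts. First, build countable local domains \(D_j\) with residue field \(\mathbb Q\) and \(\dim D_j[n]=n+d_j+\sum_i\min(n,t_{j,i})\). Second, glue lifts of all the \(D_j\) into one semilocal UFD \(C\).

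\emph{Step 1 (base domains).} For each \(j\) take a chain of composite valuation-type rings inside a countable purely transcendental extension \(K_j\) of \(\mathbb Q\), for instance \(\mathbb Q(x_1,x_2,\ldots)\). The \(i\)-th step is a DVR pullback whose residue extension has transcendence degree exactly \(t_{j,i}\) over the next residue field, with infinitely many variables when \(t_{j,i}=\infty\). The pullbacks are arranged so that the final residue field is \(\mathbb Q\). Computing the residue capacities \(\tau\) along the unique saturated chain, using \cref{lem:residue-extension} for the upper bounds and explicit valuations for the lower bounds, gives the flag value \(d_j+\sum_i\min(n,t_{j,i})\). I would also check that no other flag of \(D_j\) beats this value, again by \cref{thm:capacity}.

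\emph{Step 2 (gluing).} Take a countable rationally Hilbertian field \(L\), for example \(\mathbb Q(y,x_1,x_2,\ldots)\). Inside \(L\) choose a PID \(S\), a suitable localization of a polynomial ring, and discrete valuations \(\nu_j\) on \(L\) with uniformizers \(\pi_j\) and residue fields \(K_j\). Set
\[
 C=\{f\in S:\ \nu_j(f)\ge0,\ \text{and the residue of }f\text{ at }\nu_j\text{ lies in }D_j\text{ for every }j\}.
\]
A prime-avoidance and approximation argument chooses each \(\pi_j\) to be a unit at \(\nu_k\) for \(k\ne j\). Then \(C/\pi_jC\cong D_j\), so \(\pi_j\) is prime, and \(C[1/\pi_j]\) localized at \(\mathfrak m_j\) agrees with a localization of \(S\), hence is a PID.

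\emph{Step 3 (factoriality).} For the UFD property I would use Nagata's criterion in a non-Noetherian form. Let \(\pi=\prod_j\pi_j\). The three inputs are that each \(\pi_j\) is prime, that \(\bigcap_k\pi_j^kC=0\) (immediate from \(\nu_j\)), and that \(C[1/\pi]\) is a UFD (it is a localization of \(S\)). Atomicity comes from \(\nu_j\)-values bounding the number of \(\pi_j\)-factors together with factoriality in \(S\). Every height-one prime is then either some \(\pi_jC\) or contracted from \(S\). In the second case it is principal, generated by an element of \(S\) chosen to be a \(\nu_j\)-unit with residue \(1\), which again uses approximation.

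\emph{Step 4 (dimension).} The primes of \(C_j\) are \(0\), height-one primes coming from \(S\), which are maximal in \(C_j[1/\pi_j]\) and contribute flags of value only \(1\), and primes containing \(\pi_j\), which correspond to primes of \(D_j\). The valuation \(\nu_j\) is the unique valuation dominating \((C_j)_{\pi_jC_j}\), a DVR, so \(\tau(0,\pi_jC_j)=0\) by \cref{rem:krull-preliminaries}. Any valuation dominating \(C_j\) at a deeper prime restricts through \(\nu_j\), so the capacities above \(\pi_j\) equal those of \(D_j\). Flags skipping \(\pi_jC_j\) are dominated via \cref{lem:residue-extension}. This yields \eqref{eq:ordinary-realization}.

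\emph{Step 5 (Hilbertianity).} The field \(L\) and each prime quotient field of \(C_j[1/\pi_j]\) are finitely generated or purely transcendental over countable fields of the type above, which are function fields over Hilbertian fields. A single Hilbert irreducibility instance involves only finitely many variables, so classical Hilbert irreducibility over \(\mathbb Q\) supplies parameter values in \(\mathbb Q\).

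The main obstacle I expect is Step 4: ruling out extra residue transcendence on flags that leave through height-one primes of \(S\), or that pass through \(\pi_jC_j\) but pick up transcendence from \(L\) not seen in \(D_j\). I would first try to bound these with \eqref{eq:positive-value-bound} applied to \(\nu_j\) composed with valuations of \(K_j\). If that fails, I would shrink \(S\) so that its residue fields at its maximal ideals are algebraic over \(\mathbb Q\).
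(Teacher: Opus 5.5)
Your Step 1 is essentially the paper's nested boundary ring \eqref{eq:ordinary-boundary-tower}. The gap is in lifting it, in Steps 2 and 3.

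The ring \(C=\{f\in S:\nu_j(f)\ge0,\ \bar f\in D_j\}\) is a pullback of the DVR \(V_j\) of \(\nu_j\) along \(D_j\subset K_j\). In such a ring \(\pi_j\) is not prime as soon as the residue condition at branch \(j\) excludes anything. Suppose \(u\in S\) satisfies your conditions at the branches \(k\ne j\), with \(\nu_j(u)\ge0\) and \(\bar u\notin D_j\), so \(\nu_j(u)=0\). Then \(\pi_ju^n\in C\) for every \(n\ge1\), since its \(\nu_j\)-residue is \(0\). Hence \((\pi_ju)^2=\pi_j\cdot\pi_ju^2\in\pi_jC\), while \(\pi_ju\notin\pi_jC\) because \(u\notin C\). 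At best you get \(C/(C\cap\pi_jV_j)\cong D_j\), and \(C\cap\pi_jV_j\) strictly contains \(\pi_jC\).

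Worse, \(u\) is almost integral over \(C\) without lying in \(C\). So \(C\) is not completely integrally closed, hence not Krull, and no version of Nagata's criterion can make it a UFD. This is the standard reason why \(D+M\) constructions, the classical source of non-Jaffard domains, never yield Krull domains. If instead you choose \(S\) so that the residue conditions are vacuous, then \(C=S\cap\bigcap_jV_j\) is Krull. But making the residue image of that intersection equal to the non-Noetherian ring \(D_j\) is then the whole problem, and a ``suitable localization of a polynomial ring'' does not address it.

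Two other claims also fail. First, the theorem requires a discrete valuation \(\nu_j\) \emph{dominating} \(C_j\), with residue field \(\mathbb Q\). Yours is centered at the height-one prime over \(\pi_j\) and has residue field \(K_j\). Composing it with a valuation dominating \(D_j\) gives rank at least two, which is not discrete. Second, in Step 4, a valuation of \(\operatorname{Frac}C\) centered at a prime \(p\supsetneq\pi_jC\) need not be composite with the \(\pi_j\)-adic valuation, so the edges \(0\subsetneq p\) are not controlled by \(D_j\). The paper bounds \(\tau_C(0,p_s)\) only by one more than the bound for the corresponding edge of \(D_j\); compare \eqref{eq:ordinary-first-bound} with \eqref{eq:ordinary-special-bound}. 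Even this needs control of \(\operatorname{Frac}C\) over the lower fields, which an arbitrary \(L\) of infinite transcendence degree does not provide.

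The paper's route avoids all of this.
\begin{itemize}
\item It builds \(C\) as a direct limit of localized polynomial rings over \(\mathbb Q\), each a semilocal UFD in which \(\pi_j=p-c_j\) is prime.
\item The arrows \eqref{eq:ordinary-full-arrow} reduce modulo \(\pi_j\) to the boundary-carrier arrows. This makes \(C/\pi_jC\) equal to \(D_{j,d_j}\) while \(\pi_j\) stays prime.
\item A fusion argument embeds the union in \(\mathbb Q[[\pi_j]]\), which gives the dominating \(\nu_j\). Then \(\sum_j\nu_j\) bounds factorization lengths, and this is how factoriality passes to the limit.
\item The silent identity \eqref{eq:ordinary-silent} and the row-by-row width count bound the capacities.
\end{itemize}
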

The remainder of this section is devoted to the proof of this theorem.

\subsection{Two arithmetic lemmas}

\begin{lemma}\label{lem:ordinary-rational-hilbert}
Every finitely generated field extension of \(\mathbb Q\) is
rationally Hilbertian.
\end{lemma}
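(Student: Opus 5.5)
The plan is to reduce to the classical Hilbert irreducibility theorem over \(\mathbb Q\), which is a Hilbertian field. Let \(F\) be finitely generated over \(\mathbb Q\). Choose a transcendence basis \(y_1,\ldots,y_s\) of \(F\) over \(\mathbb Q\), so that \(F\) is a finite separable extension of \(L=\mathbb Q(y_1,\ldots,y_s)\). Suppose we are given irreducible polynomials \(f_1,\ldots,f_k\in F(T_1,\ldots,T_p)[Z]\) and a nonzero exclusion \(g\in F[T_1,\ldots,T_p]\). We must find \(\mathbf c\in\mathbb Q^p\) with \(g(\mathbf c)\neq0\) and each \(f_\ell(\mathbf c,Z)\) defined and irreducible over \(F\).

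First, pass from \(F\) down to \(L\) by taking norms. This is the standard argument showing that finite separable extensions of Hilbertian fields are Hilbertian, run with a specified subset of parameters. Pick a primitive element \(\theta\) of \(F/L\). The Hilbert subset of \(F^p\) determined by the \(f_\ell\) and \(g\) then contains the intersection with \(L^p\) of a Hilbert subset of \(L^p\). Concretely, consider the irreducible factors over \(L(\mathbf T)\) of the norms \(N_{F(\mathbf T)/L(\mathbf T)}f_\ell\). Add the polynomial in \(\theta\) whose specializations must stay irreducible so that the specialized norm factorization reflects that of \(f_\ell\). Also add the norm of \(g\) and the denominators as exclusions. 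So it suffices to show that every Hilbert subset of \(L^p\) meets \(\mathbb Q^p\).

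Second, handle \(L=\mathbb Q(\mathbf y)\), purely transcendental over \(\mathbb Q\). An irreducible polynomial \(h(\mathbf T,Z)\) over \(L\) can, after clearing denominators, be viewed as a polynomial \(H(\mathbf y,\mathbf T,Z)\in\mathbb Q[\mathbf y,\mathbf T,Z]\). Irreducibility of \(h\) over \(L(\mathbf T)\) in \(Z\) means that \(H\) is irreducible in \(\mathbb Q(\mathbf y,\mathbf T)[Z]\). Apply Hilbert's irreducibility theorem over \(\mathbb Q\) to \(H\), specializing only the variables \(\mathbf T\) and keeping \(\mathbf y\) generic. This uses the standard form of the theorem in which a proper subset of the variables is specialized: for a Hilbertian field \(k\), the Hilbert sets of \(k(\mathbf y)\) contain \(k\)-rational points of the corresponding Hilbert set in the \(\mathbf T\)-coordinates. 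This can be proved by the Kronecker substitution reducing to a single variable, or by the fact that \(\mathbb Q(\mathbf y)/\mathbb Q\) is separable, so that irreducibility over \(\mathbb Q(\mathbf y,\mathbf T)\) specializes. We obtain \(\mathbf c\in\mathbb Q^p\) with \(H(\mathbf y,\mathbf c,Z)\) irreducible over \(\mathbb Q(\mathbf y)\) and the finitely many exclusions nonzero. The exclusions are nonzero polynomials in \(\mathbf y,\mathbf T\), so we take nonvanishing as polynomials in \(\mathbf T\) over \(\mathbb Q(\mathbf y)\), a condition that avoids a proper Zariski-closed set and is compatible with Hilbert sets over \(\mathbb Q\).

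The main obstacle I expect is the first step: making sure the norm reduction keeps the specialization point rational, not merely in \(L\). That reduction produces a Hilbert set of \(L^p\) in the same variables \(\mathbf T\), so \(\mathbb Q\)-points of it suffice. I would verify this carefully using the classical lemma (e.g.\ Fried--Jarden, Ch.~12) that for a finite separable \(F/L\) every separable Hilbert subset of \(F^p\) contains \(H\cap L^p\) for some separable Hilbert set \(H\) of \(L^p\). Characteristic zero removes the separability issues.
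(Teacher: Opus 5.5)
Your argument is correct, but it is organized differently from the paper's.

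\textbf{Your route.} You black-box two transfer results. The first is the Fried--Jarden theorem that a Hilbert subset of \(F^p\) contains a Hilbert subset of \(L^p\), where \(L=\mathbb Q(y)\); your ``\(H\cap L^p\)'' is simply such a subset. The second is the fact that Hilbert subsets of \(\mathbb Q(y)^p\) meet \(\mathbb Q^p\).

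\textbf{The paper's route.} The paper merges both steps into a single application of classical multivariable Hilbert irreducibility over \(\mathbb Q\). It takes one primitive element \(\theta=\alpha+\lambda\beta\) of the compositum \(F(T)(\beta)\) over \(\mathbb Q(u,T)\), of degree \(eh\), with monic minimal polynomial \(H(u,T,Z)\). It then specializes the transcendence basis \(u\) \emph{together with} \(T\) to a rational point \((u_0,c)\) avoiding one inverted polynomial \(b\). Irreducibility lifts back to \(H(u,c,Z)\) over \(\mathbb Q(u)\): any monic factors would have coefficients in the integrally closed ring \(\mathbb Q[u,1/b(u,c)]\), so they could be specialized at \(u_0\). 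The retained identities and the tower law then give irreducibility of \(f(c,X)\) over \(F\). So where you use norms, the paper uses a degree count for one primitive element. Where you keep \(y\) generic, the paper specializes and lifts.

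\textbf{Your second step.} Of your two justifications, only the Kronecker-substitution one is an actual argument. After removing content, \(H\) is irreducible in \(\mathbb Q(T)[y,Z]\). Hence the set of \(c\) with \(H(c;y,Z)\) irreducible in \(\mathbb Q[y,Z]\) contains a Hilbert subset of \(\mathbb Q^p\). Add the \(Z\)-leading coefficient as an exclusion, and Gauss's lemma gives irreducibility over \(\mathbb Q(y)\). The remark about separability of \(\mathbb Q(y)/\mathbb Q\) proves nothing and can be dropped. The paper's specialize-and-lift trick is an equally short alternative.

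\textbf{Comparison.} Your route is shorter given the textbook results, and it makes the structure transparent: first the finite separable part, then the purely transcendental part. The paper's route is self-contained and uses only Hilbert irreducibility over \(\mathbb Q\) itself. It also yields an explicit finite list of conditions, which is the form the paper reuses in the relative Hilbert construction.
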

\begin{proof}
Let \(F/\mathbb Q(u_1,\ldots,u_s)\) be finite separable of degree \(e\),
and let \(f(T,X)\in F(T)[X]\) be irreducible of degree \(h\).
Choose \(\alpha\) with \(F=\mathbb Q(u)(\alpha)\), and let
\(\beta\) be a root of \(f\). The extension \(F(T)(\beta)\)
has degree \(eh\) over \(\mathbb Q(u,T)\). Choose a primitive
element \(\theta=\alpha+\lambda\beta\), with \(\lambda\in\mathbb Q\),
and monic polynomial \(H(u,T,Z)\). Invert a single nonzero
polynomial \(b(u,T)\) so that
the coefficients of \(H\), the identities expressing the generators
of \(F\) and \(\beta\) in \(\theta\), and the reverse identity
\(\theta=\alpha+\lambda\beta\) are defined. After clearing
denominators, include in \(b\) the norms of all denominators,
of the leading coefficient of \(f\), and of each prescribed
nonzero exclusion. Classical multivariable Hilbert irreducibility
over \(\mathbb Q\) gives rational \((u_0,c)\) with \(b(u_0,c)\ne0\)
and \(H(u_0,c,Z)\) irreducible of degree \(eh\)
\cite{Iadarola2021}.
Then \(H(u,c,Z)\) is irreducible over \(\mathbb Q(u)\): a monic
factorization has coefficients in the integrally closed ring
\(\mathbb Q[u,1/b(u,c)]\) and would specialize at \(u_0\).
The retained identities identify this degree-\(eh\) field with
\(F(\beta_c)\), where \(f(c,\beta_c)=0\). The tower degree
formula proves irreducibility
of \(f(c,X)\) over \(F\). The same proof handles several parameters
and finite simultaneous conditions.
\end{proof}

\begin{lemma}\label{lem:ordinary-scalar-lock}
Let \(F_\alpha\) be finitely many finitely generated fields over
\(\mathbb Q\), each containing \(\mathbb Q(p)\), and let
\(L_\alpha/F_\alpha\) be finite extensions. Prescribe finitely many
nonzero elements \(\rho\) of each \(F_\alpha\).
For all but finitely many \(a\in\mathbb Q\), each \(L_\alpha\) has
a discrete valuation with
\[
 v(p-a)=1,\qquad v(\rho)=0.
\]
Consequently, for any \(b\ge2\), adjoining a \(b\)-th root of
\(\rho/(p-a)\) is linearly disjoint from \(L_\alpha/F_\alpha\).
\end{lemma}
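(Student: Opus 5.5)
The plan is to reduce to a single finitely generated field and use the geometry of a model. Fix one index \(\alpha\) and write \(F=F_\alpha\), \(L=L_\alpha\). Since \(F\) is finitely generated over \(\mathbb Q\) and contains \(p\), either \(p\) is algebraic over \(\mathbb Q\) or it is transcendental. In the algebraic case there are only finitely many \(a\in\mathbb Q\) with \(p-a=0\); for the statement to be meaningful I expect \(p\) to be transcendental, and I treat that case. Then \(F\) is a finite extension of \(\mathbb Q(p,u_2,\ldots,u_s)\) for a transcendence basis beginning with \(p\). Choose a finitely generated normal \(\mathbb Q[p]\)-algebra \(D\subseteq L\), integral over a localization \(\mathbb Q[p,u][1/g]\) and with fraction field \(L\), such that every prescribed \(\rho\) (finitely many) is a unit in \(D\); this is done by inverting \(g\) equal to the product of norms of the \(\rho\) and their inverses' denominators, and then passing to the integral closure, which is finite by the Noether normalization and finiteness of normalization over \(\mathbb Q\).

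Next I locate the valuation. For all but finitely many \(a\in\mathbb Q\), the element \(p-a\) is a nonunit of \(D\). Indeed \(\mathbb Q[p]\to D\) is injective and of finite type, so by Chevalley its image in \(\operatorname{Spec}\mathbb Q[p]\) is constructible and contains the generic point, hence contains all but finitely many closed points \((p-a)\). For such \(a\), pick a minimal prime \(P\) of \((p-a)D\); by Krull's principal ideal theorem \(\operatorname{ht}P=1\), and since \(D\) is normal Noetherian, \(D_P\) is a DVR with valuation \(w\). Then \(w(\rho)=0\) because \(\rho\) is a unit of \(D\). The remaining issue is to get \(w(p-a)=1\), i.e. unramifiedness. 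This is the main obstacle. I would handle it by generic smoothness: since the characteristic is zero, \(\operatorname{Spec}D\to\operatorname{Spec}\mathbb Q[p]\) is smooth over a dense open subset of the base (after shrinking, by inverting one more element of \(\mathbb Q[p]\), which excludes finitely many \(a\)). Over a point where the map is smooth, the fibre \(D/(p-a)D\) is reduced, so \((p-a)D_P=PD_P\) and \(w(p-a)=1\). Excluding the finitely many bad \(a\) for each of the finitely many \(\alpha\) proves the first assertion.

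For the consequence, fix \(b\ge2\) and \(y\) with \(y^b=\rho/(p-a)\). Take the valuation \(v\) on \(L\) just found and restrict it to \(F\); its value \(v|_F(\rho/(p-a))=-1\) after normalizing on \(L\). In \(L\) the element \(\rho/(p-a)\) has value \(-1\), which is not divisible by any prime dividing \(b\), so by the standard ramification argument the polynomial \(Y^b-\rho/(p-a)\) is irreducible over \(L\). Concretely, any root in an extension of \(L\) with valuation extending \(v\) has value \(-1/b\), so the extension has ramification index at least \(b\), forcing degree \(b\) over \(L\). The same reasoning over \(F\) (using that \(v|_F(p-a)=e\cdot\) with \(e\) coprime to \(b\)? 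Rather: since \([L(y):L]=b\ge[F(y):F]\) and \(b=[L(y):L]\le[F(y):F]\le b\)) gives \([F(y):F]=b=[L(y):L]\), which is exactly linear disjointness of \(F(y)\) and \(L\) over \(F\) by the degree criterion, since \(F(y)/F\) is generated by a single element whose minimal polynomial stays irreducible over \(L\).
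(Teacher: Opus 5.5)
Your strategy works and is close to the paper's. Both arguments find, for cofinitely many \(a\), a discrete valuation of \(L_\alpha\) at a height-one prime over \((p-a)\), in a model where the \(\rho^{\pm1}\) are integral. Both then finish with a ramification argument.

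\textbf{The generic smoothness step is wrong as written.} You claim that \(\operatorname{Spec}D\to\operatorname{Spec}\mathbb Q[p]\) is smooth over a dense open subset of the base. That needs the source to be smooth over \(\mathbb Q\). A merely normal \(D\) can be singular in codimension two along every fibre. For example, the normalization of \(\mathbb Q[p,u_2,u_3]\) in \(\mathbb Q(p,u_2,u_3)(\sqrt{u_2u_3})\) is \(\mathbb Q[p,u_2,u_3,w]/(w^2-u_2u_3)\). Its fibre over every \(p=a\) is a singular quadric cone, so the map is smooth over no closed point.

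\textbf{How to repair it.} You only use that \(D_P/(p-a)D_P\) is reduced, and this is easy to recover.
\begin{itemize}
\item Replace \(D\) by \(D[1/h]\), with \(0\ne h\in D\) chosen so that \(D[1/h]\) is regular. The regular locus of a finitely generated \(\mathbb Q\)-domain is open and contains the generic point.
\item The \(\rho\) remain units, and your Chevalley argument still applies.
\item The generic fibre is now regular over the characteristic-zero field \(\mathbb Q(p)\), hence smooth.
\item The map is flat, because \(D[1/h]\) is torsion-free over the PID \(\mathbb Q[p]\).
\item So the closed non-smooth locus misses the generic fibre, and its image is a finite set of closed points.
\end{itemize}
Alternatively, use the constructibility of the set of base points with geometrically reduced fibre.

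\textbf{Comparison with the paper, first assertion.} The paper avoids this issue altogether. It takes a primitive element of \(L_\alpha\) over \(\mathbb Q(p,u)\) and inverts its discriminant and the relevant denominators. This gives a finite \'etale model over \(\mathbb Q[p,u][1/g]\). For every \(a\) with \(p-a\nmid g\), the \((p-a)\)-adic Gauss DVR of \(\mathbb Q(p,u)\) then has an unramified prolongation to \(L_\alpha\). That yields \(v(p-a)=1\) directly, with no Chevalley, Krull principal ideal theorem, or generic smoothness, so it is the more elementary route. Your Chevalley step could also be replaced by lying over in the integral extension \(D\supseteq\mathbb Q[p,u][1/g]\).

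\textbf{Second assertion.} Your argument here is correct and slightly more economical than the paper's. The paper checks that \(X^b-(p-a)/\rho\) is Eisenstein for the valuation on both \(L_\alpha\) and \(F_\alpha\). You instead get \([L(y):L]=b\) from \(e\le[L(y):L]\), and then \([F(y):F]=b\) from \([L(y):L]\le[F(y):F]\le b\). This never requires discussing the restriction of the valuation to \(F_\alpha\).
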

\begin{proof}
Extend \(p\) to a transcendence basis of \(F_\alpha/\mathbb Q\).
Over the resulting rational field, \(L_\alpha\) is finite separable.
A primitive element, its discriminant, and denominator clearing give
a finite etale model after inverting finitely many polynomials.
Arrange also that each prescribed \(\rho\) and \(\rho^{-1}\) is
integral over the localized base. For all but finitely many rational
\(a\), none of the inverted polynomials vanishes identically at
\(p=a\). The \((p-a)\)-adic Gauss DVR then has an unramified
prolongation to \(L_\alpha\), and every prescribed \(\rho\) is a unit.
This is the usual discriminant proof of unramifiedness away from
finitely many divisors; see \cite{BourbakiCA}.
At that DVR, \(X^b-(p-a)/\rho\) is Eisenstein. Its irreducibility
over both fields gives degree \(b\) after either base extension,
which is the asserted linear disjointness.
\end{proof}

\subsection{Polynomial carriers for the boundary}

Here is the characteristic-zero carrier used at one level. Let \(D\)
be a countable local domain, \(L=\operatorname{Frac}D\), and
\(K=L(s_1,\ldots,s_t)\), allowing countably many independent \(s_i\).
Use rational DVR fields \(K(U_r)\), with arrows
\begin{equation}\label{eq:ordinary-boundary-arrow}
 U_r=q(s)U_{r+1}^{b},\qquad s_i\longmapsto s_i,
\end{equation}
where \(b\) is an arbitrarily large power of two. At a finite stage,
only finitely many \(s_i\) occur. The carrier coordinates are
\(U_r,s_1U_r,\ldots,s_tU_r\). If \(q=H^2\) and
\(b\ge2\deg H+3\), their images are polynomials of degree at least
two in the new coordinates.

Every finite collection of positive elements of the old DVR fits
in a later carrier. Indeed such an element has the form
\[
 \frac{\sum_{\ell\ge1}a_\ell(s)U_r^\ell}
      {1+\sum_{\ell\ge1}b_\ell(s)U_r^\ell}.
\]
Choose \(H\) divisible by all coefficient denominators. The square
\(H^2\) still clears them, and sufficiently large \(b\) homogenizes
every positive term. The denominator becomes one plus an element
of the new coordinate ideal, hence a local unit. Coefficients in
\(L\) are cleared as follows: choose \(0\ne c\in D\) clearing
the finitely many coefficients, and replace the new uniformizer
by its quotient by \(c\). A coefficient in degree \(N>0\) is
multiplied by \(c^N\). Since \(b\) is even, the factor \(c^b\)
in \eqref{eq:ordinary-boundary-arrow} remains a square.
Only elements of \(D\), not inverses of its nonunits, are put in
a later lower carrier.

Fairly performing these finite tasks produces a rank-one valuation
ring \(V\), with nondiscrete dyadic value group and residue field \(K\),
and exactly the boundary ring \(D+M\), where \(M\) is the maximal
ideal of \(V\). To see equality, the carrier union lies in \(D+M\),
and every element of \(M\) occurs in a finite DVR chart and is
eventually included by the preceding calculation. The union DVR
fields form a valuation field because the sign of every element
is decided in one chart. Their extensions are totally ramified,
so the residue field is \(K\).

These constructions may be nested. Write \(D_0=L_0=\mathbb Q\) and
\begin{equation}\label{eq:ordinary-boundary-tower}
 D_i=D_{i-1}+M_i,\qquad
 L_i=\operatorname{Frac}D_i,\qquad
 \kappa(V_i)=L_{i-1}(s_{i,1},\ldots,s_{i,t_i}).
\end{equation}
The valued fields are built together with their arrows, rather than
requiring roots of coefficients in a previously fixed field.
Each finite stage uses finitely many lower coordinates. Enlarging a
lower chart extends the upper coefficient field and leaves its
uniformizer and ratios independent. Thus every finite set of targets
fits into a finite nested polynomial carrier.

The spectrum of \(D_i\) is the chain consisting of \(0\), \(M_i\),
and the inverse images of the nonzero primes of \(D_{i-1}\).
Indeed every nonzero element of \(M_i\), when inverted, inverts
all of \(M_i\) by the Archimedean value group and leaves the upper
fraction field; primes containing \(M_i\) are the pullbacks of
the lower primes. On a finite interval of levels,
\begin{equation}\label{eq:ordinary-boundary-td}
 \operatorname{trdeg}_{L_a}L_b
       =\sum_{i=a+1}^{b}(1+t_i)
\end{equation}
if all caps in that interval are finite, even if \(L_a\) itself has
infinite transcendence degree. The valuation \(V_i\) supplies
capacity \(t_i\) on its individual boundary edge. Consequently
\begin{equation}\label{eq:ordinary-boundary-lower}
 \dim D_d[n]\ge n+d+\sum_{i=1}^{d}\min(n,t_i)
\end{equation}
by the evaluation-kernel lower bound in \cref{thm:capacity}.

\subsection{Simultaneous branches and exact identities}

Choose distinct rational \(c_1,\ldots,c_m\), put
\[
 \pi_j=p-c_j,\qquad P=\prod_j\pi_j,\qquad
 C_0=\mathbb Q[p]_{\mathbb Q[p]\setminus\bigcup_j(\pi_j)},
 \qquad D=\max_jd_j.
\]
We construct prefixes \(C_0\subset C_1\subset\cdots\subset C_D\).
At row \(i\), branch \(j\) is active if \(i\le d_j\), with a block
of \(1+t_{j,i}\) coordinates; an infinite cap has increasing finite
width. At branch \(j\), every other block is called silent.
Finite prefix stages are polynomial rings over \(\mathbb Q\) in
\(p\) and all current blocks up to that row, localized away from
the union of the ideals
\((p-c_j,\text{all coordinates})\).
They are semilocal UFDs. All their fraction fields are rational.

Put
\[
 b_j(p)=\prod_{k\ne j}\frac{p-c_k}{c_j-c_k}.
\]
Refine one block at a time, with higher and other current blocks
unchanged. After lower-coefficient clearing, its arrow is
\begin{equation}\label{eq:ordinary-full-arrow}
 \begin{split}
 Y_0-Pe_0&=b_j(p)(p-a)H(s)^2Z_0^b,\\
 Y_\ell-Pe_\ell&=b_j(p)(p-a)s_\ell H(s)^2Z_0^b,
       \qquad s_\ell=Z_\ell/Z_0,
 \end{split}
\end{equation}
where the \(e_\ell\) are rational digits, \(a\in\mathbb Q\setminus
\{c_1,\ldots,c_m\}\), and \(b\ge2\deg H+3\) is a power of two.
Coefficients of \(H\) belong to a later lower prefix, and no
same-row foreign block occurs in them. Modulo \(\pi_j\), this
is the carrier arrow with coefficient \((c_j-a)\bar H^2\);
at a foreign node the block becomes zero. The maps are injective
by block dominance over the lower fraction field, and are local
at each prescribed maximal ideal.

Besides boundary targets, impose the following three countable
families of requirements, each repeated cofinally.
First, for every lower element \(a_0\) divisible by none of the
\(\pi_k\), put a sufficiently large power of \(a_0\) in every
block's \(H\). Then
\begin{equation}\label{eq:ordinary-lower-action}
       Y_\ell-Pe_\ell=a_0^N\Psi_\ell(Z),
       \qquad \Psi_\ell(Z)\in(Z)^2
\end{equation}
for any prescribed finite \(N\). Even powers suffice.
Second, for a polynomial
\(f=a+\sum_{|\alpha|>0}u_\alpha Y^\alpha\) whose constant
projection at branch \(j\) is nonzero modulo \(\pi_j\), remove
all distinguished factors from \(a\), writing
\(a=a_0\prod_k\pi_k^{q_k}\). Use
\eqref{eq:ordinary-lower-action} with zero digits and large \(N\).
At branch \(j\), the other \(\pi_k\) are units, and direct division
of the positive terms by \(a\) gives
\begin{equation}\label{eq:ordinary-denominator}
                  f=a\times\text{a local unit}.
\end{equation}
Lower local denominators may first be cleared.
The finite normalization procedure below removes the distinguished
factors inside a later stage, with a permanent certificate that the
remaining factor has nonzero projection at each node.

Third, let \(f\ne0\) be a polynomial in the silent blocks at
branch \(j\), with lower coefficients. A free digit arrow makes
its constant term \(a\) nonzero: the substitution \(Y=Pe\)
is a nonzero polynomial in rational digits over the lower fraction
field. Write \(a=a_0\prod_k\pi_k^{q_k}\). On every silent block,
put a large power of \(a_0\) and a power \(\pi_j^M\) in \(H\),
and use zero digits. This is legal at that block's active node,
where \(\pi_j\) is a unit. Taking \(2M>q_j\) gives the exact
identity
\begin{equation}\label{eq:ordinary-silent}
 f=a(1+\pi_jh),\qquad
 h\in(C_i)_{\mathfrak m_{i,j}},\quad 0\ne a\in C_{i-1}.
\end{equation}
Every divided positive term contains
\(a_0^{N|\alpha|-1}\pi_j^{2M|\alpha|-q_j}\) times branch units.
Thus the parenthesized factor is a local unit. This identity
persists under all later maps. These three requirements use only
finitely many coefficients and coordinates at a time.

\subsection{Fusion, normalization, and factoriality}

We give the nonvanishing details, since mere independence of the
initial tuple does not suffice when widths increase.
For every polynomial from every created full finite stage, at
each branch \(j\), substitute the current tuple by \(PZ\) and
normalize its coefficients at the \((p-c_j)\)-adic valuation
of \(\mathbb Q(p)\). Its reduction is a nonzero polynomial
over \(\mathbb Q\). At a free sweep choose rational digits
outside these finitely many zero sets. Additional conditions,
such as the nonzero constant term used above, are likewise
nonzero polynomial conditions. A finite union of proper
polynomial zero sets cannot cover \(\mathbb Q^N\).

At branch \(j\), \(P\) has value one, and every nonconstant
term in \eqref{eq:ordinary-full-arrow} has degree at least two
in centered coordinates. Thus the current tuple is
\(Pe+O(\pi_j^2)\) for every centered future tail. A nonzero
reduction just prescribed, and its finite integer order, are
therefore permanent. A sweep through all blocks increases the
minimum difference order between two centered tails by at least
one; the triangular dependence of upper coefficients on lower
coordinates does not alter this estimate, since such terms
still contain at least two upper coordinate factors.
Insert full sweeps cofinally. Backward zero-tail truncations
then converge at every fixed stage, uniformly in future widths,
to tuples in \(\pi_j\mathbb Q[[\pi_j]]\).
Every finite-stage polynomial was treated, so evaluation is
injective and defines a valuation \(\nu_j\) on the entire union.
It has value group \(\mathbb Z\), satisfies
\(\nu_j(\pi_j)=1\), and has center exactly the prescribed
branch maximal ideal and residue field \(\mathbb Q\).

For relative fusion we must also normalize lower coefficients
at the \emph{final} DVR over \(\pi_j\). This can be done in
finitely many steps. The actual branch-\(j\) projection of a
finite prefix polynomial sets \(p=c_j\), kills all foreign
blocks, and retains the independent active carrier tuples.
Its kernel is consequently generated by \(\pi_j\) and the
foreign coordinates. A sweep makes every foreign coordinate
divisible by \(\pi_j\). Hence a zero-projection element divided
by \(\pi_j\) belongs to a later stage.
First prescribe its scalar nonvanishing condition above, fixing
a finite \(\nu_j\)-value. Repeat this division while its actual
projection is zero. The number of repetitions cannot exceed
that value. The resulting nonzero projection is preserved by
every subsequent active carrier injection. This resolves the
exact \(\pi_j\)-order of the element. Apply the same procedure
to numerators and denominators of finitely many coefficients.
Normalization at another node cannot spoil the result, since
that node's parameter is a unit here.

Now enumerate every polynomial in a current row tuple with
coefficients in every finite lower fraction-field chart.
Write \(F_{i-1}=\operatorname{Frac}C_{i-1}\) for the final constructed
lower field and \(v_0\) for its \(\pi_j\)-DVR valuation.
For \(f(Y)=\sum_\alpha c_\alpha Y^\alpha\ne0\), set
\[
 m=\min_{c_\alpha\ne0}\bigl(v_0(c_\alpha)+|\alpha|\bigr),
 \qquad g(Z)=\pi_j^{-m}f(PZ).
\]
Since \(v_0(P)=1\), the coefficients of \(g\) lie in the lower
DVR and its reduction is nonzero. The preceding finite procedure
fixes these coefficient orders and residues. Choose rational digits
\(e\) at which that reduction is nonzero. The estimate
\(Y=Pe+O(\pi_j^2)\) makes the condition permanent.
Thus the entire row tuple has an algebraically independent
fused image over \(F_{i-1}\) in the fraction field of the completion
of its \(\pi_j\)-DVR. This treats all coefficients in
\(F_{i-1}\), because each finite list occurs in a finite
lower chart. In particular it treats polynomials involving
both active and silent blocks.

All queues are countable and each requirement takes a finite
block, so a diagonal ordering treats every requirement.
For each prefix, \(\sum_j\nu_j\) is positive on every nonunit.
It bounds the number of nonunit factors of a fixed nonzero
element. A factorization of maximal length has factors
irreducible in the union. Two such factorizations lie in one
finite semilocal UFD stage, where their factors are still
irreducible, proving uniqueness. Hence every \(C_i\) is a UFD.
The branch maximal ideals have residue field \(\mathbb Q\);
an element outside their union is already a unit in a finite
stage, so these are all the maximal ideals.
Boundary cofinality proves \eqref{eq:ordinary-boundary-tower}
at every branch and
\begin{equation}\label{eq:ordinary-special-quotient}
 C_i/\pi_jC_i=D_{j,\min(i,d_j)},\qquad
 (C_i)_{(\pi_j)}\text{ is a DVR with residue field }
 L_{j,\min(i,d_j)}.
\end{equation}

For later use, upper charts over the final lower ring \(C_{i-1}\)
are polynomial localizations on independent tuples: every
alleged finite relation occurs in a later finite lower chart.
Their filtered union is flat over the lower ring, and the
prescribed maximal ideals make it faithfully flat.
If \(Q\) avoids \(P\) and has nonzero lower contraction \(q\),
use \eqref{eq:ordinary-lower-action} for \(a_0\in q\).
Every old upper expression becomes a lower expression modulo
\(qC_i\), and every finite-stage unit denominator becomes
a lower semilocal unit. Faithful flatness gives contraction
back to \(q\), and therefore
\begin{equation}\label{eq:ordinary-inherited}
       Q=qC_i,\qquad C_i/Q\simeq C_{i-1}/q.
\end{equation}

\subsection{Preserving Hilbert witnesses}

We may impose arithmetic conditions simultaneously with all
preceding choices. Write \(C_{i,s}\) for the finite chart of prefix
\(i\) at construction time \(s\). The arithmetic lemmas are
applied to fraction fields of these finite charts and their prime
quotients, all finitely generated over \(\mathbb Q\).
At every finite prefix chart, a generic prime
quotient field contains \(\mathbb Q(p)\): a nonzero polynomial
in \(p\) is a product of distinguished prime powers and a
semilocal unit, whereas a generic prime avoids \(P\).

We first justify the maps on such quotient fields. Follow a
final height-one generic prime which is new over the lower prefix.
Its irreducible representative remains irreducible at every
later finite stage: finite-stage nonunits remain nonunits.
For a block refinement, every nonunit coordinate exceptional
factor is squared, since the common factor in the
homogenized right sides of \eqref{eq:ordinary-full-arrow} is
\[
 Z_0^{\,b-2\deg H-1}(H^{\mathrm h})^2.
\]
If the old prime vanishes identically at the center \(Y=Pe\),
its image is divisible by \(Z_0^2\), impossible. A lower
coefficient exceptional factor would make the prime inherited,
also impossible for this chosen final new prime. Distinguished
factors in \(b_j\) are generic units.

Consequently this prime lies on the open where
\(D_0=Y_0-Pe_0\) and the relevant homogenized denominators
are nonzero. On that open the arrow has the finite free
presentation
\begin{equation}\label{eq:ordinary-kummer}
 Z_0^b=\frac{\rho}{p-a},\qquad
 \rho=\frac{D_0}{b_j(p)H(r)^2},\quad
 r_\ell=\frac{Y_\ell-Pe_\ell}{D_0},\quad
 Z_\ell=r_\ell Z_0.
\end{equation}
This is first a presentation over the old polynomial chart,
not an assumed quotient-field embedding. Flat going-down at
the surviving new height-one prime makes its contraction have
height at most one; it contains the old height-one prime,
so equals it \cite{AtiyahMacdonald1969}. Only now may we
pass to its old quotient field.

At each finite chart \(C_{i,s}\), fairly enumerate irreducible
polynomials over \(\operatorname{Frac}C_{i,s}\) and over its
current new height-one generic prime
quotient fields, with parameter exclusions. Skip a polynomial
if it has become reducible, and discard a prime's requirements
if it factors or becomes inherited. By
\cref{lem:ordinary-rational-hilbert}, choose a rational Hilbert
specialization and retain the resulting finite root extension.
At any finite time there are only finitely many retained
extensions. Apply \cref{lem:ordinary-scalar-lock} to all affected
finite-chart fields and surviving quotient fields, excluding also
the fixed nodes \(c_j\). Choose \(a\) in
\eqref{eq:ordinary-full-arrow} last, after the old fields, digits,
and \(H\), hence \(\rho\), are fixed. Formula
\eqref{eq:ordinary-kummer} is irreducible over every retained
extension, so preserves all of them by linear disjointness.
Carry each retained extension forward by base change after
every relevant arrow.
The factor \(p-a\) is a unit at every branch and a nonzero
scalar in every boundary. It changes none of the carrier,
divisibility, or nonvanishing requirements.

A lower-prefix requirement is unaffected by higher-row
refinements. For a final new prime, none of its requirements
is discarded, and the preceding quotient-field argument applies
to every subsequent relevant refinement. An inherited prime
has exactly its earlier quotient field by
\eqref{eq:ordinary-inherited}. Thus induction on prefixes
handles all height-one generic primes. The PID conclusion proved
below makes these all the nonzero generic primes.
A polynomial irreducible over a
union is irreducible over every earlier field containing its
coefficients; its rational specialization is eventually
prescribed and remains irreducible by the retained extension.
This proves rational Hilbertianity of all required union fields.
Fresh coordinates at an infinite cap give purely transcendental
extensions and automatically preserve the same finite extensions.

\subsection{The finite-cap generic PID}

For this subsection all current cap widths are fixed and finite.
Fix row \(i\) and branch \(j\), and put
\(F=\operatorname{Frac}C_{i-1}\). Let \(S\) be the field generated
over \(F\) by all silent coordinates in this row.
Here \(F A\), for a \(C_{i-1}\)-algebra \(A\) contained in the
constructed field, denotes \((C_{i-1}\setminus\{0\})^{-1}A\).
For any valuation centered at branch \(j\), with positive
\(\pi_j\), \eqref{eq:ordinary-silent} gives
\[
 f/g=(a/b)\frac{1+\pi_jh}{1+\pi_jh'}.
\]
Thus its restriction to \(S\) is immediate over its restriction
to \(F\): values agree, and residues agree when their value is
zero. In a valued quotient with zero lower contraction, no
nonzero silent polynomial vanishes, by the same identity.
Hence \(S\) embeds there and the same immediacy holds.
We use this assertion only before the first nonzero lower
contraction.

At an inactive branch every new block is silent, so its relative
generic ring is a field. At an active branch let
\(\mathfrak h\) be the inverse image of its new first conductor.
Equation \eqref{eq:ordinary-denominator} gives
\begin{equation}\label{eq:ordinary-first-conductor}
 F(C_i)_{\mathfrak m_{i,j}}
       =(C_i)_{\mathfrak h}[1/\pi_j].
\end{equation}
Indeed each denominator outside \(\mathfrak h\) is a lower
nonzero element times a branch unit, and every lower nonzero
element is a power of \(\pi_j\) times an element outside
\(\mathfrak h\).

The relative fused valuation on \(S\) is immediate over the
lower DVR \(C_{i-1,(\pi_j)}\); let its valuation ring be \(W\).
It is a DVR with the same residue field \(L\).
Put \(V=C_{i-1,(\pi_j)}\). Successively matching residues in
\(V\), subtracting, and dividing by the common uniformizer
\(\pi_j\) approximates every element of \(W\) modulo
\(\pi_j^N W\) by an element of \(V\), for every \(N\).
Thus \(V\) is dense in \(W\), and their completions canonically agree.
The whole \(W\) lies in \((C_i)_{\mathfrak h}\), since the
displayed silent ratio reduces membership to a lower DVR
fraction times units. Moreover
\begin{equation}\label{eq:ordinary-W-chart}
 (C_i)_{\mathfrak h}
       =\bigcup_s W[Y_{s,j}]_{(\pi_j,Y_{s,j})}.
\end{equation}
All silent coordinates and lower DVR coefficients lie in \(W\).
Every finite denominator outside the conductor has nonzero
constant residue, and conversely every such polynomial
denominator is outside the conductor. These facts prove both
inclusions. The active tuple has \(1+t_{j,i}\) independent
coordinates over \(S\). Independence over the whole silent
field follows by placing the finitely many coefficients of a
putative relation in one later separated block chart.

We prove that the generic ring of \eqref{eq:ordinary-W-chart}
has dimension at most one. Write \(t=t_{j,i}\). For \(t=0\)
its one-variable charts suffice. Otherwise a strict two-step
generic prime chain has persistent witnesses in all later
charts. The top quotient field \(\Lambda\) therefore satisfies
\(\operatorname{trdeg}_{S}\Lambda\le t-1\), by the polynomial
height formula. Dominate its local quotient at \(\mathfrak h\)
by a valuation \(v\), restricting to the DVR \(W\).
Put \(\delta=v(\pi_j)>0\) and
\[
 \Delta=\{\gamma:|\gamma|\le N\delta\text{ for some }N\},\qquad
 \Delta^-=\{\gamma\in\Delta:n|\gamma|<\delta\text{ for all }n\ge1\}.
\]
Coarsen by \(\Delta\) and pass to the residue field. The field
\(S\) embeds, since its nonzero elements have values in
\(\mathbb Z\delta\); \cref{lem:residue-extension} shows that
relative transcendence degree does not increase. Coarsen the
residual valuation by \(\Delta^-\). Its group
\(\Delta/\Delta^-\) is Archimedean, so it is rank one, normalized
by \(v(\pi_j)=1\). Its center still contains \(\pi_j\).
The only possibilities are \((\pi_j)\) and \(\mathfrak h\);
the killed nonzero generic prime cannot lie below the
height-one UFD prime \((\pi_j)\). The center is therefore
\(\mathfrak h\), and the resulting residue field has
transcendence degree at most \(t-1\) over \(L\).

If the nonzero images of all active coordinates in all later
charts have a common positive lower bound \(\epsilon\), put
\(\eta=\min(\epsilon,1)\). The silent identity forces the quotient
valuation to restrict to the same valuation on \(S\). Its separated
completion therefore contains the common completion of \(W\)
and \(V\), with the same coefficient embedding as the fused state.
Both coordinate states have values at least \(\eta\).
Every nonconstant monomial in an active arrow has degree at least
two and coefficients of nonnegative value; its difference at the
two states is a sum of terms, each with one difference factor and
at least one further factor of value at least \(\eta\).
Each full backward sweep
therefore raises the minimum difference value by at least
\(\eta\). Arbitrarily many sweeps force equality in the separated
rank-one completion. A killed nonzero polynomial witness then
contradicts the algebraic independence of the fused state over \(S\).
Otherwise choose a coordinate lift \(u_1\) whose value is
\(c\), \(0<c<1/4\), and whose boundary class is \(e\in M\).
For every \(\lambda\in L(s_1,\ldots,s_t)\), the element
\(\lambda e\) belongs to \(M\); choose a lift \(u_\lambda\).
Modulo \(\pi_j\),
\[
 u_\lambda u_{\lambda^{-1}}\equiv u_1^2,\qquad
 u_\lambda u_\mu\equiv u_{\lambda\mu}u_1,\qquad
 u_\lambda+u_\mu\equiv u_{\lambda+\mu}.
\]
All nonzero lifts have positive value. The first congruence
gives \(0<v(u_\lambda)<2c\); the second gives
\(v(u_\lambda)+v(u_\mu)=v(u_{\lambda\mu})+c\).
The homomorphism \(\lambda\mapsto v(u_\lambda)-c\) must vanish,
since \(v(u_{\lambda^n})>0\) for every positive and negative
integer \(n\). After division by \(u_1\) or \(u_1^2\), the
errors have positive value. The residues of
\(u_\lambda/u_1\) consequently embed \(L(s_1,\ldots,s_t)\)
in the residue field. This is the identity on \(L\): for
\(\lambda\in L\), use a lift from \(W\) times \(u_1\).
It contradicts the bound \(t-1\). Both cases are impossible.

Inductively \(C_{i-1}[1/P]\) is a PID. By
\eqref{eq:ordinary-inherited}, a generic prime with nonzero
lower contraction is principal and height one. A two-step
chain with zero lower contraction would remain such a chain
at a branch containing its top member and violate
\eqref{eq:ordinary-first-conductor} and the preceding argument.
Hence all nonzero primes of \(C_i[1/P]\) have height one.
A one-dimensional UFD is a PID: its maximal ideals are
principal, so a finite set of elements with gcd one generates
the unit ideal; choosing finitely many elements attaining the
minimum exponents among the prime factors of a fixed element
of an arbitrary ideal proves that ideal principal.

This finite-cap construction starts at any prescribed finite
nested stage and retains every earlier map, digit, and finite
extension requirement. All the additional tasks above concern
only future finite blocks.

\subsection{Infinite caps}

At an infinite cap adjoin independent coefficient variables
fairly, always with finite current width. Treat every boundary
target and every polynomial from every newly created stage.
To obtain generic PIDs, also treat every pair in every generic
prefix. Freeze the current finite widths and complete the
exact current nested stage by the finite-cap construction just
proved, retaining its current finite extension requirements.
In that auxiliary completion the pair has witnesses
\[
       a=ga',\qquad b=gb',\qquad g=ua+vb.
\]
All five elements and their equalities occur in a finite later
stage. Adopt just that finite block and discard the unused
continuation. The boundary valued fields were constructed with
their arrows, so this block extends the exact existing charts;
new independent coefficient variables can still be added
afterwards. No completed auxiliary ring is identified with
the eventual ring. The displayed identities, all silent
identities, and all prior nonvanishing and field-extension
conditions persist.

A diagonal ordering treats pairs at every prefix together with
all preceding requirements. The resulting generic prefixes are
Bezout UFDs, hence PIDs. For completeness, in a Bezout UFD,
successively taking a gcd with an element outside the currently
generated principal ideal strictly removes an irreducible
factor of a fixed initial element; this process terminates
and proves that every nonzero ideal is principal. All
previous boundary, factoriality, valuation, and arithmetic
conclusions thus hold for arbitrary infinite caps.

\subsection{The individual dimension bounds}

Fix a branch \(j\), put \(d=d_j\), \(t_i=t_{j,i}\), and
localize at its maximal ideal. The primes containing \(\pi_j\)
are exactly
\[
 p_0=(\pi_j)\subsetneq p_1\subsetneq\cdots
          \subsetneq p_d=\mathfrak m_j,\qquad
 p_s/\pi_jC=M_{j,d-s+1}+\cdots+M_{j,d}.
\]
Every other nonzero prime is generic and height one, since its
generic ring is a localization of the PID \(C_D[1/P]\).
Thus a base flag consists of zero, optionally one generic
prime, and a subsequence of this special flag.

Write \(F_i=\operatorname{Frac}C_i\).
For a finite active interval, an active row contributes at
most its width \(1+t_i\) to residue transcendence. The
silent extension is immediate by \eqref{eq:ordinary-silent},
and an inactive row contributes zero. These assertions apply
over the full lower field, regardless of its transcendence
degree: every alleged finite relation or finite tuple occurs
in a common later chart. At a first nonzero prime contraction
the silent field still embeds, and a persistent nonzero
polynomial witness in the active chart lowers its quotient
transcendence degree by one.

For the first special prime \(p_s\), put \(\ell=d-s\).
Its contraction to \(C_\ell\) is \((\pi_j)\), with residue
field \(L_{j,\ell}=\kappa(p_s)\). Every valuation centered
there restricts on \(F_\ell\) to its \(\pi_j\)-DVR.
Iterating immediacy and \cref{lem:residue-extension} through
the active rows \(\ell+1,\ldots,d\), and through subsequent
inactive rows, gives
\begin{equation}\label{eq:ordinary-first-bound}
 \tau_C(0,p_s)\le s+\sum_{i=\ell+1}^{d}t_i.
\end{equation}
Suppose instead that a nonzero generic prime \(q\) precedes
\(p_s\). Its contraction to \(C_\ell\) is zero, since it is
contained in \((\pi_j)\) there and avoids \(\pi_j\).
Let \(h>\ell\) be the first row at which its contraction is
nonzero. This row is active: at an inactive row
\eqref{eq:ordinary-silent} rules out a new nonzero prime
over zero. Up to row \(h-1\) the residue contributions are
at most \(1+t_i\); at row \(h\) the persistent quotient
witness gives at most \(t_h\). Above \(h\),
\eqref{eq:ordinary-inherited} identifies the quotient fields
with the preceding lower quotient, so they do not grow.
Consequently
\begin{equation}\label{eq:ordinary-generic-bound}
 \tau_C(q,p_s)
   \le\sum_{i=\ell+1}^{h-1}(1+t_i)+t_h
   \le s-1+\sum_{i=\ell+1}^{d}t_i,\qquad
 \tau_C(0,q)=0.
\end{equation}
Finally, for a special jump \(p_a\subsetneq p_b\), the local
quotient has fraction field \(L_{j,d-a}\), over the lower
field \(L_{j,d-b}=\kappa(p_b)\). Its dominating valuation
is trivial on the lower field and nontrivial on the upper
one. A positive element together with independent residue
lifts is algebraically independent over the lower field.
Equation \eqref{eq:ordinary-boundary-td} therefore gives
\begin{equation}\label{eq:ordinary-special-bound}
 \tau_C(p_a,p_b)\le b-a-1+
                 \sum_{i=d-b+1}^{d-a}t_i.
\end{equation}

The estimates just proved were needed only for intervals
whose own caps are finite. If an interval contains an
infinite cap, its sum of truncated caps already contains
\(n\), so the bound \(\min(n,\tau)\le n\) suffices.
Using
\(\min(n,c+\sum t_i)\le c+\sum\min(n,t_i)\), the contributions
of a first special edge, a pair of edges through a generic
prime, and a later special jump are respectively bounded by
\[
 s+1+\sum_{i=d-s+1}^{d}\min(n,t_i),\qquad
 s+1+\sum_{i=d-s+1}^{d}\min(n,t_i),\qquad
 b-a+\sum_{i=d-b+1}^{d-a}\min(n,t_i).
\]
Their intervals are disjoint along a flag. Enlarging a
terminal interval if necessary, \cref{thm:capacity} yields
\[
 \dim C_j[n]\le n+d+1+\sum_{i=1}^{d}\min(n,t_i).
\]
The boundary chains in \eqref{eq:ordinary-boundary-lower},
lifted and preceded by zero below \(\pi_jC_j[n]\), give
the reverse inequality. They also force equality in
\eqref{eq:ordinary-boundary-lower}, proving
\eqref{eq:ordinary-realization}.

Each \(C_j[1/\pi_j]\) is a localization of \(C_D[1/P]\).
Its nonzero prime quotient field is the corresponding global
generic prime quotient field: a nonzero localization of a
field does not change it. The arithmetic construction
therefore supplies exactly the fields asserted in
\cref{thm:ordinary}, while scalar fusion supplies the stated
valuations. This completes its proof.

\section{A relative Hilbert construction}
\label{sec:relative}

This section gives the construction which supplies the delayed basic
profiles.  All rings contain \(\mathbb Q\).  We use the rational
Hilbert property and its classical-Hilbert-irreducibility proof from
Lemma~\ref{lem:ordinary-rational-hilbert}.

\begin{lemma}[Finite Hilbert fibers]\label{lem:relative-hit}
Let \(K\) be rationally Hilbertian.  If an integral affine
\(K\)-algebra \(E\) is finite over a polynomial subring
\(K[h_1,\ldots,h_r]\), then there are \(c_i\in\mathbb Q\) such that
\(E/(h_i-\delta_i c_i)_i\) is a finite field extension of \(K\),
where the \(\delta_i\in K^\times\) may be specified in advance.
The tuple can avoid any prescribed nonzero polynomial conditions.
\end{lemma}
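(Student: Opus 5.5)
We reduce the statement to the classical Hilbert irreducibility argument of \cref{lem:ordinary-rational-hilbert}, using a primitive element for \(E\) over the polynomial subring.

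\emph{Step 1: a single polynomial.} Since \(E\) is an integral affine domain finite over \(A_0=K[h_1,\ldots,h_r]\), its fraction field \(\operatorname{Frac}E\) is a finite extension of \(K(h)=K(h_1,\ldots,h_r)\). In characteristic zero it is separable. Choose a primitive element \(\theta\in E\); a generic \(\mathbb Q\)-linear combination of finitely many \(A_0\)-module generators works. Let \(H(h,Z)\in A_0[Z]\) be its minimal polynomial. It is monic and irreducible over \(K(h)\), of degree \(e=[\operatorname{Frac}E:K(h)]\), and its coefficients lie in \(A_0\) because \(A_0\) is integrally closed.

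Next choose \(0\ne\beta\in A_0\), for instance the discriminant of \(H\) times a common denominator, such that
\[
 E[1/\beta]=A_0[1/\beta][\theta]\cong A_0[1/\beta][Z]/(H).
\]
This uses the facts that \(E\) is finite over \(A_0\) and that \(A_0[\theta]\) agrees with \(E\) after inverting the discriminant.

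\emph{Step 2: specialization.} Substitute \(h_i=\delta_ic_i\). The polynomial \(H(\delta c,Z)\) has coefficients in \(K\), and \(E/(h_i-\delta_ic_i)\) is the fiber of \(E\) over the \(K\)-point \(\delta c\). If \(\beta(\delta c)\ne0\), this fiber equals \(K[Z]/(H(\delta c,Z))\). That ring is a field of degree \(e\) exactly when \(H(\delta c,Z)\) is irreducible over \(K\).

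Now apply rational Hilbertianity of \(K\). The field \(K(T_1,\ldots,T_r)\), with \(T_i=h_i/\delta_i\), is just \(K(h)\) with rescaled parameters, since each \(\delta_i\in K^\times\). So \(H(\delta T,Z)\) is irreducible over \(K(T)\). The definition therefore supplies rational \(c\) with \(H(\delta c,Z)\) irreducible over \(K\). We include \(\beta(\delta T)\) and the prescribed nonzero polynomial conditions, rewritten in \(T\), as exclusions. Finiteness of the quotient over \(K\) is automatic from the finiteness of \(E\) over \(A_0\).

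\emph{Main obstacle.} The delicate step is the identification of the fiber with \(K[Z]/(H(\delta c,Z))\). Without inverting \(\beta\), the fiber \(E/(h-\delta c)E\) could be nonreduced or carry extra components, since \(E\) need not be flat or free over \(A_0\) away from an open set.

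The fix is that \(E\) is a finitely generated torsion-free \(A_0\)-module. By generic freeness it is free of rank \(e\) over \(A_0[1/\beta']\) for some nonzero \(\beta'\). Including \(\beta'\) in \(\beta\), the fiber over any point with \(\beta(\delta c)\ne0\) is a \(K\)-algebra of dimension exactly \(e\). It contains \(K[Z]/(H(\delta c,Z))\), also of dimension \(e\), as a subalgebra, so the two are equal.

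Finally, every nonzero polynomial in \(T\) avoids some rational value, because rational points are Zariski dense. So the exclusion conditions are compatible with the Hilbert set provided by the definition.
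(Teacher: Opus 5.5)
Your proposal is correct and is essentially the paper's argument: take a primitive element \(\theta\in E\), invert a nonzero \(\beta\) so that \(E[1/\beta]=K[h,1/\beta][\theta]\), and apply the rational Hilbert property to the minimal polynomial after the substitution \(h_i=\delta_iT_i\), with \(\beta(\delta T)\) and the prescribed conditions as exclusions. Your generic-freeness paragraph is redundant, because the isomorphism \(E[1/\beta]\cong A_0[1/\beta][Z]/(H)\) from Step 1 already identifies the fiber with \(K[Z]/(H(\delta c,Z))\) whenever \(\beta(\delta c)\neq0\).
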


\begin{proof}
Choose a primitive element \(\theta\in E\) for
\(\operatorname{Frac}E/K(h)\); a \(K\)-linear combination of
algebra generators suffices.  Express those generators in powers
of \(\theta\) and invert a nonzero polynomial \(b(h)\) clearing
the coefficients.  Then \(E[1/b]=K[h,1/b][\theta]\).
Avoid \(b\) and the discriminant, and use
the defining Hilbert property on the primitive polynomial after the
invertible substitution \(h_i=\delta_iT_i\).  The resulting fiber
is exactly its finite field.
\end{proof}

\subsection{Two algebraic modification lemmas}

\begin{lemma}[High-power normalization]\label{lem:relative-normalization}
Let \(K\) be a field of characteristic zero, and let
\(0\ne q\in K[x,z_1,\ldots,z_k]\).  Choose distinct primes
\(\ell_i>\deg_{z_i}q\), put \(N_i=\ell_iM_i+1\), and set
\(h_i=z_i^{\ell_i}-x^{N_i}\), where the \(M_i\ge1\) are arbitrary.
Then \(K[x,z]/(q)\) is finite over the algebra generated by the
\(h_i\).  The assertion is uniform for every nonzero specialization
of the coefficients of \(q\).

Suppose, in addition, that an element \(f\) satisfies
\begin{equation}\label{eq:relative-weight-bound}
f^d+\sum_{j=1}^d c_j(x,z)f^{d-j}=0,
 \qquad
 \sum_i\alpha_i/\ell_i<j
 \quad\text{for every monomial }z^\alpha\text{ in }c_j.
\end{equation}
If \(z_i^{\ell_i}=b_i(x)+fT_i\), then the algebra generated by
\(f,z_1,\ldots,z_k\) is finite over the coefficient ring generated
by \(x,T_1,\ldots,T_k\).  These statements hold with any fixed
coefficient ring in the second assertion.
\end{lemma}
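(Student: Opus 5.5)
The plan is to treat both assertions as weighted leading-form arguments, with all the combinatorics coming from the fractional weights \(\sum_i\alpha_i/\ell_i\).

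\emph{First assertion.} Give \(x\) weight \(1\) and \(z_i\) weight \(N_i/\ell_i=M_i+1/\ell_i\). Then the leading form of \(h_i\) is \(h_i\) itself, since \(z_i^{\ell_i}\) and \(x^{N_i}\) both have weight \(N_i\).

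The key observation concerns monomials \(x^az^\alpha\) with \(\alpha_i\le\deg_{z_i}q<\ell_i\). Their weights have fractional part \(\sum_i\alpha_i/\ell_i \bmod 1\). Because the \(\ell_i\) are distinct primes, the Chinese remainder theorem shows that distinct such \(\alpha\) give distinct fractional parts. So two distinct monomials of \(q\) with equal weight would have to share \(\alpha\) and then also \(a\). Hence the top weight part of \(q\) is a single monomial \(c\,x^az^\alpha\) with \(c\neq0\).

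Now consider the weighted leading forms \(z_i^{\ell_i}-x^{N_i}\) and \(c\,x^az^\alpha\). Their common zero set over \(\overline K\) is just the origin:
\begin{itemize}
\item if \(x=0\), then every \(z_i^{\ell_i}=0\);
\item if some \(z_i=0\), then \(x^{N_i}=0\), so \(x=0\) and we are in the first case.
\end{itemize}
So these \(k+1\) weighted-homogeneous forms in \(k+1\) variables form a homogeneous system of parameters. By the standard graded argument (finiteness of the associated graded ring, lifted along the weight filtration), \(K[x,z]\) is finite over \(K[h_1,\ldots,h_k,q]\). Passing to the quotient, \(K[x,z]/(q)\) is finite over the image of \(K[h]\).

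Uniformity is automatic. A nonzero specialization of the coefficients of \(q\) does not raise any \(\deg_{z_i}\), so the bound \(\alpha_i<\ell_i\) still holds. The top-weight part therefore remains a single nonzero monomial, and the argument repeats verbatim.

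\emph{Second assertion.} Let \(A\) be the fixed coefficient ring and put \(R=A[x,T_1,\ldots,T_k]\). Each \(z_i\) satisfies the monic equation \(z_i^{\ell_i}-b_i(x)-fT_i=0\) over \(R[f]\). So it suffices to show that \(f\) is integral over \(R\).

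Assign to \(f^ez^\gamma\) the weight \(e+\sum_i\gamma_i/\ell_i\). A reduction \(z_i^{\ell_i}\mapsto b_i(x)+fT_i\) replaces a factor of weight \(1\) by terms of weight \(0\) or \(1\), so it never raises weight. Let
\[
 N=\sum_{e<d,\ \gamma_i<\ell_i}R\,f^ez^\gamma,
\]
a finitely generated \(R\)-module containing \(1\). I will show \(fN\subseteq N\). The only nontrivial case is \(f\cdot f^{d-1}z^\gamma=f^dz^\gamma\). By \eqref{eq:relative-weight-bound}, \(f^d\) equals \(-\sum_j c_jf^{d-j}\), and each term \(z^\alpha f^{d-j}\) has weight strictly below \(d\). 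Multiplying by \(z^\gamma\) and reducing exponents modulo \(\ell_i\) gives terms \(f^ez^{\gamma'}\) with \(\gamma'_i<\ell_i\) and weight \(<d+\sum_i\gamma_i/\ell_i<d+k\).

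A term of weight below \(d\) automatically has \(e<d\). Terms with \(e\ge d\) have weight strictly smaller than the original \(f^dz^\gamma\). So I will argue by induction on weight, or more precisely on the finitely many fractional weight values: each \(f^ez^{\gamma'}\) with \(e\ge d\) is rewritten again, and the weight strictly drops each time. Since weights lie in \(\frac{1}{\prod\ell_i}\mathbb Z_{\ge0}\), the process terminates.

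Once \(fN\subseteq N\), the determinant trick makes \(f\) integral over \(R\). Hence \(R[f,z]\) is finite over \(R\). Nothing in this argument uses a field, so it holds over any fixed coefficient ring.

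\emph{Main obstacle.} The step I expect to be delicate is the termination in the second assertion. One must check that reducing \(z^{\gamma}z^{\alpha}\) never produces an \(f\)-power exceeding the weight budget. The precise point is that each overflow of \(\ell_i\) consumes exactly one unit of fractional weight while contributing at most one factor of \(f\). Only then does the strict inequality in \eqref{eq:relative-weight-bound} give a strict weight decrease at each rewriting of an \(f^{\ge d}\) term.
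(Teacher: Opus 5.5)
Your proposal is correct. For the second assertion you follow the paper's own argument: you give $f$ weight $1$ and $z_i$ weight $1/\ell_i$. The relation for $f^d$ strictly lowers weight. Substituting for $z_i^{\ell_i}$ never raises weight, and in the equal-weight term it lowers total $z$-degree. Termination then comes from the lexicographic pair (weight, $z$-degree) with weights in $\frac{1}{\prod\ell_i}\mathbb{Z}_{\ge0}$. The only difference is that you route the conclusion through the determinant trick on $N$. That step is redundant, since the same rewriting already shows $R[f,z]=N$.

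For the first assertion you share the paper's key combinatorial step: distinct primes $\ell_i>\deg_{z_i}q$ force a unique top-weight monomial $c\,x^{a}z^{\alpha}$. From there you conclude by a different route.

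\begin{itemize}
\item The paper passes to the algebra $K[H,x,z]/(z_i^{\ell_i}-x^{N_i}-H_i)$, which is free of rank $\prod\ell_i$ over $K[H,x]$. Using Puiseux expansions at $x=\infty$, it shows that the norm of $q$ has constant leading $x$-coefficient $\pm c^{\prod\ell_i}$. The adjugate identity then yields an explicit monic equation for $x$ modulo $q$ over $K[h]$.
\item You observe that the weighted leading forms $h_1,\dots,h_k$ and $c\,x^az^\alpha$ have only the origin as common zero. So they form a homogeneous system of parameters, and you lift finiteness from the graded ring along the weight filtration.
\end{itemize}

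Your route avoids norms and power-series roots, does not use characteristic zero, and gives the slightly stronger statement that $K[x,z]$ is finite over $K[h,q]$. The paper's route instead produces an explicit integral equation with controlled leading coefficient and degree, which makes the uniformity under specialization visible directly.

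If you write yours out, two small points are worth stating. First, scale the weights by $\prod\ell_i$ so they become positive integers, which makes the induction in the lifting step well-founded. Second, note that the case of constant $q$ is trivial.
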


\begin{proof}
Give \(x\) weight one and \(z_i\) weight \(M_i+1/\ell_i\).
Distinct monomials of \(q\) have distinct weights: multiply an
equality of weights by \(\prod_i\ell_i\), reduce modulo each
\(\ell_i\), and use that the difference of the corresponding
\(z_i\)-exponents has absolute value less than \(\ell_i\).
Thus there is a unique largest-weight monomial
\(c x^{a_0}\prod_i z_i^{a_i}\).

The algebra
\[
 K[H,x,z]/(z_i^{\ell_i}-x^{N_i}-H_i)_i
\]
is free of rank \(b=\prod_i\ell_i\) over \(K[H,x]\).  The norm
of \(q\) in this free algebra has a nonzero constant leading
coefficient as a polynomial in \(x\).  To verify this explicitly,
put \(x=s^{-b}\) over an algebraic closure of \(K(H)\).  Its
simultaneous roots have the expansions
\[
 z_i=\zeta_i s^{-bN_i/\ell_i}
          (1+H_i s^{bN_i})^{1/\ell_i},\qquad\zeta_i^{\ell_i}=1.
\]
At each root the leading term of \(q\) is its unique largest-weight
monomial.  Their product has \(x\)-degree
\(b a_0+\sum_i a_iN_ib/\ell_i\), and leading coefficient
\(\pm c^b\).  The adjugate identity puts this norm in the ideal
generated by \(q\).  Dividing by its leading coefficient gives
a monic equation for \(x\) modulo \(q\); the displayed monic
equations then make all \(z_i\) integral.  This proves finiteness.
A specialization only deletes monomials, so the same weight argument
works for every nonzero specialized polynomial.

For the last assertion give \(f\) weight one, \(z_i\) weight
\(1/\ell_i\), and the coefficient ring weight zero.  The monic
equation for \(f\) reduces \(f^d\) to terms of strictly lower
weight.  Replacing \(z_i^{\ell_i}\) by \(b_i(x)+fT_i\) does
not increase weight, and its equal-weight term strictly lowers total
\(z\)-degree.  A common integral multiple of weight, followed by
total \(z\)-degree, is a decreasing pair of nonnegative integers.
The reductions terminate and leave the finite spanning set
\(f^a\prod_i z_i^{a_i}\), with \(a<d\) and \(a_i<\ell_i\).
\end{proof}

\begin{lemma}[Uniform horizontal modification]\label{lem:relative-horizontal}
Let \(B\) be a PID containing \(\mathbb Q\), allowing a field,
and put \(F=\operatorname{Frac}B\).  Suppose \(E\) is a UFD
finite locally free over \(B[x,z_1,\ldots,z_k]\), \(k\ge1\),
and \(E/aE\) is a domain for each prime element \(a\in B\).
Let \(f\) be a prime of \(E\) not associated with a lower prime.
Choose the distinct \(\ell_i\) larger than the coordinate degrees
of the norm of \(f\), and satisfying
\eqref{eq:relative-weight-bound} for its characteristic polynomial.
Put \(g_i=z_i^{\ell_i}-x^{\ell_iM_i+1}-b_i\), \(b_i\in B\).
If
\[
 E_F/(f,g_1,\ldots,g_k)=K'
\]
is a finite field extension of \(F\), then
\[
 E'=E[T_1,\ldots,T_k]/(fT_i-g_i)_i
\]
is a UFD with the same fraction field as \(E\), finite locally
free over \(B[x,T]\).  Every \(E'/aE'\) is a nonzero domain
containing \(E/aE\), and \(E'/fE'\) is a domain, torsionfree
over \(B\), with generic fiber \(K'[T]\).
\end{lemma}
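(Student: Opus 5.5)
The plan is to treat \(E'\) as a blow-up-type chart: \(E'[1/f]\) is just \(E[1/f]\), and all the content sits in the fiber over \(f\). I would combine Nagata's criterion for factoriality with a Cohen--Macaulay and regular-sequence count. That count rests on the finiteness supplied by \cref{lem:relative-normalization}.

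\emph{Step 1 (finiteness and flatness).} Write \(A=B[x,z_1,\ldots,z_k]\). Since \(E\) is finite locally free over the regular ring \(A\), it is Cohen--Macaulay. In \(E'\) the relations read \(z_i^{\ell_i}=x^{\ell_iM_i+1}+b_i+fT_i\). The choice of the \(\ell_i\) is exactly hypothesis \eqref{eq:relative-weight-bound} for the characteristic polynomial of \(f\) over \(A\). So the second assertion of \cref{lem:relative-normalization}, with coefficient ring \(B[x,T]\), shows that the subalgebra generated by \(f,z\) is finite over \(B[x,T]\). Adding the finitely many module generators of \(E\) over \(A\), and noting that \(x\) is a coefficient, shows \(E'\) is finite over \(B[x,T]\).

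Next, count dimensions. \(E[T]\) has relative dimension \(2k+1\) over \(B\), and \(E'\) is finite over a ring of relative dimension \(k+1\). Hence the \(k\) elements \(fT_i-g_i\) cut the dimension by \(k\) at every relevant localization, so they form a regular sequence in the Cohen--Macaulay ring \(E[T]\). Thus \(E'\) is Cohen--Macaulay and finite over the regular ring \(B[x,T]\) with matching fiber dimensions. Miracle flatness then gives that \(E'\) is finite locally free over \(B[x,T]\).

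\emph{Step 2 (the fibers).} The same reasoning applies after reducing modulo a prime element \(a\in B\). \(E/aE\) is a domain, and \(f\notin aE\) because \(f\) is not associated with a lower prime. So \(a,f\) is a regular sequence in \(E\), and the finiteness above, taken modulo \(a\), shows that \(a,f,g_1,\ldots,g_k\) is a regular sequence in the Cohen--Macaulay local rings of \(E\). Since the order in such a sequence may be permuted in local Cohen--Macaulay rings, \(a\) is a nonzerodivisor on \(E/(f,g)\).

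By hypothesis \(E_F/(f,g)=K'\), so \(E/(f,g)\) embeds in \(K'\) and is a domain, torsionfree over \(B\). Then
\[
E'/fE'=\bigl(E/(f,g)\bigr)[T]
\]
is a domain with generic fiber \(K'[T]\).

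Likewise \(f\) is a nonzerodivisor on \(E'\) and on \(E'/aE'\). Therefore \(E'\hookrightarrow E'[1/f]=E[1/f]\), and \(E'/aE'\hookrightarrow (E/aE)[1/f]\). Consequently \(E'\) is a domain with the fraction field of \(E\), and \(E'/aE'\) is a nonzero domain containing \(E/aE\).

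\emph{Step 3 (factoriality).} \(E'\) is a Noetherian domain, \(f\) is a prime element by Step 2, and \(E'[1/f]=E[1/f]\) is a localization of the UFD \(E\), hence a UFD. Nagata's criterion therefore makes \(E'\) a UFD.

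I expect the main obstacle to be Step 1 together with its reduction modulo \(a\). The monic equation for \(f\) and the weight bound must be checked to survive reduction modulo \(a\) and to really give finiteness over \(B[x,T]\) rather than over \(B[x,z,T]\). Everything else, namely the regular-sequence and flatness claims and hence the torsionfreeness, depends on that finiteness. I would first verify it on the generic fiber and modulo each \(a\) separately, using the uniformity clause of \cref{lem:relative-normalization}.
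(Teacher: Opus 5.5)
\paragraph{The gap is in Step 2.} Step 1 is sound and is a reasonable variant of the paper's argument. Finiteness over $B[x,T]$ makes $\kappa(P)$ algebraic over $\kappa(P\cap B[x,T])$, which gives $\dim E'_P\le\dim E[T]_{\mathfrak P}-k$. Hence the $fT_i-g_i$ are regular, $E'$ is Cohen--Macaulay, and miracle flatness replaces the paper's Auslander--Buchsbaum step.

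The trouble is the claim that ``the finiteness above, taken modulo $a$'' makes $a,f,g_1,\ldots,g_k$ regular. Reduced modulo $(a,f,T)$, finiteness of $E'$ over $B[x,T]$ only says that $E/(a,f,g)$ is finite over $(B/a)[x]$. That already follows from $z_i^{\ell_i}=x^{\ell_iM_i+1}+b_i$ and does not use $f$ at all. It does not exclude a prime $\mathfrak p\supseteq(a,f,g)$ of $E$ with $\mathfrak p\cap B[x]=aB[x]$. Such a prime has height $k+1$, so the $k+2$ elements $a,f,g_1,\ldots,g_k$ cannot be regular in $E_{\mathfrak p}$. Then $E'$ has a height-one prime containing both $a$ and $f$. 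Consequently $f$ is a zerodivisor on $E'/aE'$, and $E'/fE'=(E/(f,g))[T]$ has $a$-torsion.

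Neither Step 1 nor the generic hypothesis $E_F/(f,g)=K'$ rules this out, because an $a$-torsion component of $E/(f,g)$ is invisible over $F$. So the following are all unsupported: torsionfreeness over $B$, the embedding $E/(f,g)\hookrightarrow K'$, primality of $f$ in $E'$, the domain property of $E'/aE'$, and hence the Nagata step.

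\paragraph{The missing idea.} You need $E/(a,f,g)$ to be zero-dimensional. This is exactly where the hypothesis that the $\ell_i$ exceed the coordinate degrees of the norm of $f$ enters, and your proposal never invokes it.
\begin{itemize}
\item Since $\bar f\ne0$ in the domain $E/aE$, multiplication by $\bar f$ is injective on the finite-dimensional space $E/aE\otimes\operatorname{Frac}((B/a)[x,z])$.
\item Hence the reduction $\overline{N_f}$ of the norm is nonzero; the norm commutes with reduction by local freeness.
\item By Cayley--Hamilton, $\overline{N_f}\in\bar f\,(E/aE)$.
\item The first assertion of \cref{lem:relative-normalization}, applied to $\overline{N_f}$, makes $E/(a,f)$ finite over $(B/a)[h_1,\ldots,h_k]$, where $h_i=z_i^{\ell_i}-x^{\ell_iM_i+1}$.
\item Since $g_i=h_i-b_i$, the ring $E/(a,f,g)$ is finite over the field $B/a$.
\item Every prime of its support therefore lies over a height-$(k+2)$ maximal ideal of $B[x,z]$. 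By going down and incomparability, $\dim E_{\mathfrak p}=k+2$ there.
\item So $a,f,g$ is a system of parameters of a Cohen--Macaulay local ring, hence a regular sequence, and your permutation argument then goes through.
\end{itemize}

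Your closing paragraph therefore locates the difficulty in the wrong place. Finiteness over $B[x,T]$ modulo $a$ is just base change of Step 1. The finiteness actually needed is over $(B/a)[h_1,\ldots,h_k]$, and it comes from the norm.
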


\begin{proof}
Write \(Q_f(U)=U^d+c_1U^{d-1}+\cdots+c_d\) for multiplication
by \(f\) on the finite locally free module \(E\), and let
\(N_f=\pm c_d\).  For every lower prime \(a\), the element
\(\bar f\) is nonzero in the domain \(E/aE\).  Its multiplication
map becomes injective on a finite-dimensional vector space over
\(\operatorname{Frac}((B/a)[x,z])\); hence
\(\overline{N_f}\ne0\).  The norm commutes with reduction by
local freeness.  Lemma~\ref{lem:relative-normalization} applied to
this nonzero polynomial, whose norm lies in \((\bar f)\) by the
characteristic equation, shows that
\[
 E/(a,f)\text{ is finite over }(B/a)[h_1,\ldots,h_k],
 \quad h_i=z_i^{\ell_i}-x^{\ell_iM_i+1}.
\]
Thus \(E/(a,f,g)\) is finite-dimensional over \(B/a\), possibly
zero.  At every prime in its support, \(a,f,g_1,\ldots,g_k\)
is a system of parameters of a Cohen--Macaulay local ring of
dimension \(k+2\).  Here \(E\) is Cohen--Macaulay because it
is finite flat over a regular ring, and its local dimensions equal
those of that ring by going down and incomparability.  Over \(F\),
the hypothesis similarly makes \(f,g_1,\ldots,g_k\) a parameter
sequence at its support.

Put \(r_i=fT_i-g_i\).  Inspect the presentation locally at a
prime containing \(f\); it then contains all \(g_i\).  If its
contraction to \(B\) is \((a)\), the preceding parameter sequence
extends to the polynomial local ring, and replacing \(g_i\) by
\(r_i\) leaves \(a,f,r_1,\ldots,r_k\) regular.  Permutability
of regular sequences in a Noetherian local ring shows that both
\((a,f)\) and \((f,a)\) are regular after quotienting by the
\(r_i\).  For contraction zero use \(f,g\) over \(F\).
Where \(f\) is invertible the equations eliminate the \(T_i\).
Consequently \(f\) is regular on \(E'\) and on every \(E'/aE'\),
each \(a\) is regular on \(E'/fE'\), and \(E'\) is
Cohen--Macaulay.  These are assertions at the common support;
no global parameter-sequence inference is needed.

It follows that
\[
 E'\hookrightarrow E'[1/f]=E[1/f],\qquad
 E'/aE'\hookrightarrow(E/aE)[1/\bar f].
\]
The second target is nonzero and contains \(E/aE\) injectively.
The quotient \(E'/fE'\), being torsionfree over the PID \(B\),
embeds in \(K'[T]\).  These give all domain assertions and
primality of \(f\).  Nagata's criterion gives factoriality.

The last assertion of Lemma~\ref{lem:relative-normalization},
followed by the original finiteness of \(E\), makes \(E'\)
finite over \(B[x,T]\).  Its unchanged fraction field has
relative transcendence degree \(k+1\), so \(x,T\) are algebraically
independent.  Put \(R=B[x,T]\).  For each \(\mathfrak p\in\operatorname{Spec}R\),
the finite \(R_{\mathfrak p}\)-algebra
\(S=E'\otimes_R R_{\mathfrak p}\) is semilocal.  Going down and
incomparability give \(\dim S_{\mathfrak q}=\dim R_{\mathfrak p}\)
at every maximal ideal \(\mathfrak q\) of \(S\).  A regular system
of parameters of \(R_{\mathfrak p}\) is therefore a parameter
sequence in each Cohen--Macaulay ring \(S_{\mathfrak q}\), hence
is regular on \(S\).  Thus \(S\) is a finite maximal Cohen--Macaulay
\(R_{\mathfrak p}\)-module, and the Auslander--Buchsbaum formula
makes it free.  This proves local freeness.
\end{proof}

\begin{lemma}[Saturation and factoriality]\label{lem:relative-saturation}
Let \(R\) be a domain, \(0\ne\pi\in R\), and
\(r_1,\ldots,r_k\in R[T]\).  If their reductions form a
regular sequence modulo \(\pi\), then
\[
 ((r_1,\ldots,r_k):\pi)=(r_1,\ldots,r_k).
\]
If its localization at \(\pi\) is a nonzero domain, the quotient
is a domain with the displayed exact special fiber.  No
Noetherian hypothesis is needed.  Also, an atomic domain with a
prime \(\pi\) and factorial localization at \(\pi\) is a UFD.
\end{lemma}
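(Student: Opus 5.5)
The lemma has three parts, and I will treat them in turn.

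\emph{Colon identity.} Write \(I=(r_1,\ldots,r_k)\subseteq R[T]\). The inclusion \(I\subseteq(I:\pi)\) is trivial, so the work is the reverse inclusion, which I will prove by induction on \(k\). Suppose \(\pi g=\sum_i c_ir_i\). Reducing modulo \(\pi\) gives \(\sum_i\bar c_i\bar r_i=0\) in \((R/\pi)[T]\).

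Regularity of \(\bar r_1,\ldots,\bar r_k\) lets me rewrite such a relation: in any relation on a regular sequence, the coefficient vector lies in the submodule generated by Koszul syzygies. Concretely, by induction on \(k\), \(\bar c_k\in(\bar r_1,\ldots,\bar r_{k-1})\), and one peels off the coefficients one at a time. Lifting, I obtain \(c_i=\sum_j a_{ij}r_j+\pi c_i'\) with \((a_{ij})\) antisymmetric. Then
\[
\sum_i c_ir_i=\pi\sum_i c_i'r_i.
\]
Since \(R[T]\) is a domain, \(\pi\) is a nonzerodivisor, so I can cancel \(\pi\) and get \(g=\sum_i c_i'r_i\in I\). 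Only exactness of the Koszul complex in degree one modulo \(\pi\) is used, which needs no Noetherian hypothesis.

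The one delicate point is that regularity of \(\bar r_1,\ldots,\bar r_k\) in this order is exactly what makes that \(H_1\) vanish. That fact is standard, proved by induction without any Noetherian assumption.

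\emph{Domain assertion.} Put \(A=R[T]/I\). By the colon identity, \(\pi\) is a nonzerodivisor on \(A\), so \(A\hookrightarrow A[1/\pi]\). The target is assumed to be a nonzero domain, so \(A\) is a domain. Moreover \(A/\pi A=(R/\pi)[T]/(\bar r)\) holds by definition, and this is the displayed exact special fiber. I read ``its localization at \(\pi\)'' as inverting \(\pi\); if a localization at the prime \(\pi\) is meant instead, the same injectivity argument applies.

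\emph{Factoriality.} This is Nagata's criterion, and I will prove it directly. Let \(D\) be atomic, \(\pi\) prime, and \(D[1/\pi]\) factorial. It suffices to show every atom \(x\) of \(D\) is prime.

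If \(x\sim\pi\), then \(x\) is prime. Otherwise \(\pi\nmid x\), since \(x\) is an atom. First I check that \(x\) stays irreducible in \(D[1/\pi]\). Suppose \(x=(u/\pi^a)(v/\pi^b)\) with \(u,v\in D\), so \(\pi^{a+b}x=uv\). Because \(\pi\) is prime and does not divide \(x\), I can cancel all powers of \(\pi\) from \(u\) and \(v\), getting \(x=u'v'\) in \(D\). Irreducibility of \(x\) in \(D\) then forces \(u'\) or \(v'\) to be a unit, so one factor is a unit in \(D[1/\pi]\).

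Being irreducible in the UFD \(D[1/\pi]\), \(x\) is prime there. Now take \(x\mid yz\) in \(D\). Then, say, \(y=x\,w/\pi^m\) with \(w\in D\), that is, \(\pi^m y=xw\). Since \(\pi\nmid x\) and \(\pi\) is prime, \(\pi^m\mid w\), so \(x\mid y\) in \(D\). Hence every atom is prime, and with atomicity this gives a UFD.

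The main obstacle is the first step, namely obtaining the antisymmetric (Koszul) form of the syzygy modulo \(\pi\) without Noetherian tools. I plan to handle it by the explicit induction on \(k\), using only the definition of a regular sequence.
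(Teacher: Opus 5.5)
Your proof is correct and follows the paper's argument essentially step for step: Koszul \(H_1\) vanishing modulo \(\pi\) by peeling off the last coefficient, subtracting lifted antisymmetric relations and cancelling \(\pi\) in the domain \(R[T]\), injectivity of the quotient into its localization with \(\pi\) inverted, and the Nagata-type cancellation showing that an atom not associated to \(\pi\) stays irreducible (hence prime) after inverting \(\pi\) and that its principal ideal contracts back. The only slip is your parenthetical hedge, which you should drop: the colon identity makes only \(\pi\) a nonzerodivisor on \(R[T]/(r_1,\ldots,r_k)\), so the injectivity argument does not transfer to localization at a prime (e.g.\ \(R=\mathbb Z\), \(\pi=p\), \(r_1=qT\) with \(q\ne p\) prime gives a non-domain whose localization at \((p)\) is \(\mathbb Z_{(p)}\)); inverting \(\pi\) is the intended and correct reading.
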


\begin{proof}
Reduce \(\pi H=\sum_i u_i r_i\) modulo \(\pi\).  The first
Koszul homology of a regular sequence vanishes over any ring
(induction on its length proves this by separating the last
coefficient).  The relation among the \(\bar r_i\) is therefore
a sum of the antisymmetric Koszul relations.  Subtract their lifts
from \((u_i)\); all remaining coefficients are divisible by
\(\pi\).  Cancel \(\pi\) in the polynomial domain.  This proves
saturation and the asserted injection into the localization.

For the last assertion an irreducible other than \(\pi\) remains
irreducible after inverting \(\pi\): clear denominators and
cancel prime factors \(\pi\) in a putative factorization.  It is
prime there.  Its principal ideal contracts unchanged, by the same
cancellation, so it was prime already.  Atomicity finishes the proof.
\end{proof}

\subsection{The relative row}

\begin{theorem}\label{thm:relative}
Let \(C\) be a countable finite-dimensional local UFD containing
\(\mathbb Q\), let \(\pi\) be prime, and put
\[
 D=C/\pi C,\quad B=C[1/\pi],\quad
 F=\operatorname{Frac}C,\quad L=\operatorname{Frac}D.
\]
Assume that \(B\) is a PID, that \(F\) and every \(B/aB\)
for a prime \(a\in B\) are rationally Hilbertian, and that a
discrete valuation \(\nu\) dominates \(C\), with
\(\nu(\pi)=1\).  Assume also
\begin{equation}\label{eq:relative-lower-profile}
\dim C[n]=1+\dim D[n]\qquad(n\ge0).
\end{equation}
For every \(t\in\mathbb N_{>0}\cup\{\infty\}\), there is
a countable local UFD \(A\supset C\), with the same residue
field and \(\pi\) prime, such that \(A[1/\pi]\) is a PID and
\begin{equation}\label{eq:relative-profile}
\dim A[n]=\dim C[n]+\max\{1,\min(n,t)\}\quad(n\ge0).
\end{equation}
Its fraction field is a rational function field in \(t\)
variables over \(F\), with countably many variables when
\(t=\infty\).  In particular the lower rings supplied by
Theorem~\ref{thm:ordinary} satisfy the required hypotheses.
\end{theorem}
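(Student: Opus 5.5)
Take \(A\) to be a filtered union of finite-stage rings \(A_s\), each obtained from the preceding one by a single application of Lemma~\ref{lem:relative-horizontal}. All stages share the fraction field \(F(z_1,\ldots,z_t)\), and the number of coordinates \(z\) grows fairly when \(t=\infty\). Start with \(A_0=C[z_1,\ldots,z_k]\) localized at \((\mathfrak m_C,z)\), with \(x=\pi\). The generic parts \(E_s=A_s[1/\pi]\) are finite locally free UFDs over \(B[x,T]\), whose fibers over primes \(a\) of \(B\) are domains; this is exactly the setting of Lemma~\ref{lem:relative-horizontal}. The invariant to maintain is that \(\pi\) stays prime and \(A_s/\pi A_s\) stays a fixed-width polynomial-type extension of \(D\). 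Lemma~\ref{lem:relative-saturation} handles this: a modification \(fT_i-g_i\) whose reductions form a regular sequence modulo \(\pi\) does not alter the special fiber beyond adjoining variables, and factoriality passes from the localization \(A_s[1/\pi]\) back to \(A_s\).

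The heart of the construction is an enumeration of all primes \(f\) of the generic rings \(E_s\) not coming from \(B\). For each such \(f\) we apply Lemma~\ref{lem:relative-hit} over the rationally Hilbertian fields \(F\) and \(B/aB\), using the finiteness supplied by Lemma~\ref{lem:relative-normalization}. This yields rational constants \(b_i\) such that \(E_F/(f,g)\) is a finite field \(K'\). Lemma~\ref{lem:relative-horizontal} then replaces \(f\) by a prime whose quotient is \(K'[T]\) generically. The purpose of this step is to drive the generic ring toward dimension one. In the union, every height-one generic prime \(f\) acquires a new variable \(T\) with \(fT=g\), and this destroys any longer chain through \(f\). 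Combined with the Bezout-witness trick of the infinite-cap subsection of Section~\ref{sec:ordinary}, the union of the generic rings becomes a PID, so \(A[1/\pi]\) is a PID. Factoriality of \(A\) then follows from Lemma~\ref{lem:relative-saturation} together with atomicity, which comes from \(\nu\) extended by the Gauss construction. Domination by a DVR extension of \(\nu\) also keeps \(A\) local with the residue field of \(C\).

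The dimension count goes through Theorem~\ref{thm:capacity}.

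\emph{Lower bound.} Along the \(\pi\)-fiber, \(A/\pi A\) contains \(D\) with adjoined variables that are units or nonunits in controlled ways. Together with \eqref{eq:relative-lower-profile}, this gives \(\dim A[n]\ge\dim C[n]+1\). A generic prime \(f\) whose quotient is a residue field of transcendence degree \(t\) (the horizontal fibers \(K'[T]\)) supplies capacity \(\tau(f,\mathfrak m)\ge\min(n,t)\) on the edge above it. This accounts for the term \(\max\{1,\min(n,t)\}\).

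\emph{Upper bound.} Flags have the form \(0\), possibly one generic prime, then special primes containing \(\pi\). Edges from \(0\) to a special prime are bounded via the immediate extension over \(\nu\) and Lemma~\ref{lem:residue-extension}. These contribute exactly the \(C\)-profile plus one, since the \(z\)-coordinates are \(\pi\)-adically fused just as in Section~\ref{sec:ordinary}. Edges passing through a generic prime lose the initial \(+1\) but may gain up to \(\min(n,t)\). Taking the maximum gives \eqref{eq:relative-profile}.

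\textbf{Main obstacle.} The hardest step is the simultaneous control of the special fiber and the generic PID property. Each horizontal modification must keep \(A_s/\pi A_s\) equal to the intended extension of \(D\) (saturation requires the regular-sequence condition modulo \(\pi\)). At the same time it must not create generic primes that reintroduce residue transcendence on a \(0\to\mathfrak p\) edge, which would spoil the sharp \(+1\). I would first verify this by choosing the exponents \(M_i\) large, so that the new \(T_i\) are \(\pi\)-adically close to zero, and arguing that the extended \(\nu\) and immediacy survive each step.
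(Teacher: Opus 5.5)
Your dimension bookkeeping puts the extra capacity on the wrong edge. In \eqref{eq:relative-profile} the summand $\min(n,t)$ has to come from a nonsaturated edge $0\subsetneq h$. Here $h$ is a height-two prime containing $\pi$ with $A/h\cong D$. That single jump replaces the two steps $0\subsetneq\pi A\subsetneq h$, and it can still be followed by any lifted flag of $D$.

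In the paper, $h$ is the inverse image of the augmentation ideal of $A/\pi A$. The equality $\tau_A(0,h)=t$ is witnessed by the Gauss extension $w$ of the $\pi$-adic valuation of $C_{(\pi)}$ to $F(x,z_1,\ldots,z_{t-1})$, whose residue field is $L(x/\pi,z_1/\pi,\ldots,z_{t-1}/\pi)$. Every adjoined fraction is chosen $w$-positive, so $w$ stays centered at $h$.

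You instead fuse the new coordinates $\pi$-adically, which would make that edge immediate and kill its capacity. You then place $\min(n,t)$ on an edge above a generic prime $f$. Passing through $f$ adds a step rather than removing one, so this overshoots. For $C=\mathbb Q[\pi]_{(\pi)}$, the flag $0\subsetneq f\subsetneq\mathfrak m_A$ would give $\dim A[n]\ge n+2+\min(n,t)$. The theorem demands $n+1+\min(n,t)$ for $n\ge1$, for example $\dim A[X]=3$. In fact the horizontal actions do the opposite of what you assign them: repeated actions at $f$ make $\operatorname{Frac}(A/f)$ algebraic over $F$, so generic edges carry no capacity at all.

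Your special-fiber invariant is also incompatible with the target. Suppose each stage only adjoins variables to $A_s/\pi A_s$. Then $A/\pi A$ is faithfully flat and local over $A_0/\pi A_0$, a localized polynomial ring over $D$ in all the new variables. Hence $\dim A\ge 1+\dim D+t=\dim C+t$, contradicting $\dim A=\dim C+1$ once $t\ge2$. In any case, the reduction of $fT_i-g_i$ is a genuine relation, not a free variable.

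The missing mechanism is the collapse of the boundary to a single curve over $D$. The paper keeps $x$ as an independent variable. With $x=\pi$, the $k$ functions $z_i^{\ell_i}-\pi^{N_i}$ cannot be algebraically independent on the $(k-1)$-dimensional generic quotient by $f$. Then \cref{lem:relative-normalization} and \cref{lem:relative-hit} supply no finite-field fiber. The paper adjoins $Z_i=(z_{s,i}^{\ell_i}-x_s^{N_i}-c_i\pi^{q_i})/(\pi f)$. The extra factor $\pi$ makes the special reductions $z_{s,i}^{\ell_i}=x_s^{N_i}$, which consume each passive block into a rational curve $\Gamma_s$ finite over $D[x_s]$.

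The paper also performs mixed actions at every lower prime $a$ of $B$. It adjoins $X=(x_s^\ell-c_0\pi^{q_0})/a^e$ and $Z_i=g_i/(\pi a)$, with a finite-field fiber supplied by the Hilbert property of $B/aB$. These actions make $A[1/\pi]/aA[1/\pi]$ a field, which your proposal never achieves. \cref{lem:relative-horizontal} treats only primes not associated with lower primes. The Bezout trick of Section~\ref{sec:ordinary} only transfers witnesses from an auxiliary construction already known to give a PID.

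Moreover, the special relation $(\bar a)^eX=x_s^\ell$ forces every prime of $A/\pi A$ over a nonzero prime $p$ of $D$ to equal $\epsilon^{-1}(p)$, where $\epsilon\colon A/\pi A\to D$ is the augmentation. This yields $\dim(A/\pi A)[n]=1+\dim D[n]$, on which the upper bound rests.
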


\begin{proof}[Construction and proof]
First let \(t<\infty\), put \(k=t-1\), and work inside the
fixed field \(F^*=F(x,z_1,\ldots,z_k)\).  Extend \(\nu\)
by the Gauss rule giving all new variables value one.  Also
extend the valuation of the DVR \(C_{(\pi)}\) by that rule,
calling it \(w\).  Then
\begin{equation}\label{eq:relative-gauss}
\kappa(w)=L(x/\pi,z_1/\pi,\ldots,z_k/\pi),
 \qquad w(\pi)=w(x)=w(z_i)=1.
\end{equation}
Its restriction is zero on \(C\setminus\pi C\).  Thus \(w\)
will be centered at the upper augmentation prime, whereas
\(\nu\) dominates the full local ring.

Maintain a full local stage \(S\subset F^*\), and an affine
generic model \(E\), with these invariants:
\begin{enumerate}[label=(\roman*)]
\item \(S\) is a local UFD containing \(C\) locally, with the
same residue field; all top generators are positive for both
valuations and \(\nu\) dominates \(S\).
\item \(E\) is a UFD finite locally free over
\(B[x_s,z_{s,1},\ldots,z_{s,k}]\); every lower prime fiber
\(E/aE\) is a domain; and \(S[1/\pi]\) is a localization of \(E\).
\item The special fiber is the local ring at the lower maximal
ideal and top origin of \(\Gamma_s[z_{s,1},\ldots,z_{s,k}]\).
Here \(\Gamma_s\) is a domain finite free over \(D[x_s]\),
augmented onto \(D\), its augmentation ideal is nilpotent
modulo \(x_s\), and
\[
 \operatorname{Frac}(L\Gamma_s)=L(u_s),\qquad
 x_s=\lambda_su_s^{d_s},\quad \lambda_s\in L^\times, d_s>0.
\]
\end{enumerate}
Initially these are the local polynomial ring \(C[x,z]\), the
affine ring \(B[x,z]\), and \(\Gamma_0=D[x]\).  Polynomial
rings over UFDs are factorial without a Noetherian hypothesis.
The generic models are Noetherian because \(B\) is a PID; the
full stages need not be Noetherian.

\smallskip\noindent\emph{Horizontal actions.}
For \(k>0\), choose a currently horizontal irreducible in
\(S[1/\pi]\), represented by a nonunit of \(S\).  The affine
UFD \(E\) supplies an associated affine prime \(f\), not
associated with a lower prime.  Its coefficients lie in
\(C[1/\pi]\), so multiply it by a power of \(\pi\) to put
it in \(S\), and divide out its largest \(\pi\)-power there.
It stays an affine prime of \(E\), belongs to \(S\), is not
\(\pi\)-divisible, and has positive \(\nu\)-value.

Choose distinct primes \(\ell_i\) satisfying all bounds in
Lemma~\ref{lem:relative-horizontal}, exceeding the degrees in the
passive variables of a numerator of \(\bar f\), and not dividing
\(d_s\).  Increase them, then choose \(N_i=\ell_iM_i+1\)
and integers \(q_i\) so that, for \(v=\nu,w\),
\begin{equation}\label{eq:relative-positive}
N_iv(x_s)>\ell_i v(z_{s,i}),\quad
 q_i>\ell_i v(z_{s,i})>v(\pi f).
\end{equation}
There are arbitrarily large permissible primes.  The norm
normalization proves that \(E_F/fE_F\) is finite over the
polynomial algebra in \(z_{s,i}^{\ell_i}-x_s^{N_i}\).
Lemma~\ref{lem:relative-hit} therefore chooses
\(c_i\in\mathbb Q^\times\) giving a finite field fiber, with
the prescribed factors \(\pi^{q_i}\).  Adjoin the actual fractions
\begin{equation}\label{eq:relative-horizontal-fractions}
Z_i=\frac{z_{s,i}^{\ell_i}-x_s^{N_i}-c_i\pi^{q_i}}{\pi f}
\end{equation}
and localize at the old maximal ideal and the new coordinates.
Lemma~\ref{lem:relative-horizontal}, with \(\pi f\) in place
of the associate \(f\), gives the new generic affine invariants.

We justify the full presentation, where \(C\) may be
non-Noetherian.  Its reduced equations are the successive monic
equations \(z_{s,i}^{\ell_i}=x_s^{N_i}\).  Each quotient is a
domain: over \(L(u_s)\), the exponent \(d_sN_i\) is coprime
to \(\ell_i\), and a valuation at \(u_s=0\) proves
irreducibility.  At later equations the old exponent is multiplied
by the preceding distinct primes, so the same coprimality holds.
Monic freeness gives the injection before extending coefficients
to \(L\).  In particular the equations are a regular sequence.
The bound on the passive degrees ensures that \(\bar f\) remains
nonzero: its numerator is already a nonzero standard monomial
combination for the successive monic equations, and its local
denominator still has unit augmentation.

Lemma~\ref{lem:relative-saturation} makes the full presentation
\(\pi\)-torsionfree.  Its generic localization is a localization
of the affine domain just proved.  The fractions
\eqref{eq:relative-horizontal-fractions} have positive value by
\eqref{eq:relative-positive}; every intended inverted denominator
therefore evaluates to a nonzero \(\nu\)-unit.  This proves the
localization is nonzero and the full presentation is its asserted
subring of \(F^*\), with exactly the displayed domain as special
fiber.  Its elements have bounded factorization length by the
integer \(\nu\)-value.  It is atomic, and the last part of
Lemma~\ref{lem:relative-saturation} makes it a UFD.

The new curve algebra remains finite free over \(D[x_s]\) and
its augmentation ideal is nilpotent modulo \(x_s\).  Rationality
of its generic field does not require roots of coefficient units:
if \(z^\ell=\lambda^N u^{dN}\) and
\(a\ell+b dN=1\), put \(v=u^a z^b\).  Then
\(u=\lambda^{-Nb}v^\ell\) and
\(z=\lambda^{Na}v^{dN}\).  Iteration supplies the asserted new
rational parameter.  The old generic quotient by \(f\), after
inverting the nonzero lower coefficients, has image in the finite
field chosen above.

\smallskip\noindent\emph{Actions at a lower prime.}
Let \(a\) be a prime of \(B\), represented by a prime of \(C\)
different from \(\pi\).  It is prime in \(E\) by (ii).
Choose distinct large prime exponents \(m_i\), coprime to the
current curve exponent, put \(N_i=m_iM_i+1\), fix \(e\ge1\),
and choose \(\ell\) and all \(q_i\) sufficiently large.  Set
\begin{equation}\label{eq:relative-mixed}
\begin{gathered}
g_0=x_s^\ell-c_0\pi^{q_0},\quad
 g_i=z_{s,i}^{m_i}-x_s^{N_i}-c_i\pi^{q_i},\\
 X=g_0/a^e,\qquad Z_i=g_i/(\pi a).
\end{gathered}
\end{equation}
Choose the exponents so that each first displayed monomial has
strictly smaller value than the other terms and strictly larger
value than its denominator, for both \(\nu\) and \(w\).
This is possible because \(e\) is fixed and the old coordinates
are positive.  The functions
\(x_s^\ell,z_{s,i}^{m_i}-x_s^{N_i}\) form a finite polynomial
normalization of \(E/aE\): the old \(x_s,z_{s,i}\) are integral
over them by their monic equations.  The Hilbert property of
\(B/aB\) chooses rational nonzero \(c_i\) with
\begin{equation}\label{eq:relative-mixed-field}
E/(a,g_0,g_1,\ldots,g_k)\text{ a finite field over }B/aB.
\end{equation}

For clarity, the affine presentation of this mixed action has no
hidden torsion.  At a prime containing \(a\), condition
\eqref{eq:relative-mixed-field} makes
\(a,g_0,\ldots,g_k\) a parameter sequence in the old
Cohen--Macaulay local ring.  In the polynomial local ring replace
the \(g_i\) by \(a^eX-g_0,\pi aZ_i-g_i\); after \(a\)
these are the same regular sequence up to sign.  Permuting it
makes \(a\) regular on their quotient.  Away from \(a\) the
equations eliminate the new coordinates.  The quotient thus
embeds in \(E[1/a]\), is a domain, and is Cohen--Macaulay.  Its
quotient by \(a\) is a polynomial ring over the field in
\eqref{eq:relative-mixed-field}, so \(a\) is prime.  Every other
lower prime \(b\) has \(a\) invertible in \(B/bB\), so the
new \(b\)-fiber is just \(E/bE\) with the fractions evaluated;
it is a domain.  Nagata gives a UFD.  The equations
\[
 x_s^\ell=a^eX+c_0\pi^{q_0},\qquad
 z_{s,i}^{m_i}=x_s^{N_i}+c_i\pi^{q_i}+\pi aZ_i
\]
make it finite over \(B[X,Z]\).  The unchanged fraction field
proves these new base coordinates independent.  The
Cohen--Macaulay and regular-base argument in
Lemma~\ref{lem:relative-horizontal} proves local freeness.

Its exact full special fiber also needs a non-Noetherian check.
First impose the monic equations on the old passive block, giving
a domain \(\Gamma_1\) free and finite over \(D[x_s]\).  The
remaining equation is \((\bar a)^eX=x_s^\ell\).  The quotient
\(\Gamma_1[X]/((\bar a)^eX-x_s^\ell)\) is
\(\bar a\)-torsionfree.  Indeed, reduce
\(\bar a H=((\bar a)^eX-x_s^\ell)G\) modulo \(\bar a\).
Multiplication by \(x_s^\ell\) is injective on
\(\Gamma_1/\bar a\Gamma_1\), since \(\Gamma_1\) is free
over \(D[x_s]\).  Thus \(G\) is \(\bar a\)-divisible,
and cancellation proves saturation.  Inverting \(\bar a\)
gives a domain, so the quotient is a domain.  It is finite free
over \(D[X]\), using the monic equation for \(x_s\).  All old
augmentation elements are nilpotent modulo \(X\), and its
generic field is the same rational curve with
\(X=x_s^\ell/(\bar a)^e\).

The full reduced sequence is the successive monic equations
followed by this nonzero equation over a domain.  Apply
Lemma~\ref{lem:relative-saturation} to obtain \(\pi\)-saturation.
The actual positive fractions in \eqref{eq:relative-mixed} ensure
nonzero localization.  Atomicity and the generic UFD then prove
that the full stage is a UFD and \(\pi\) is prime.  A lower
prime cannot be lost on localizing: its value under \(\nu\) is
positive; if it became a unit after inverting \(\pi\), the UFD
would make it a \(\pi\)-power times a unit, contrary to its
nonzero special reduction.  The same reasoning retains the
chosen horizontal prime.  This completes preservation of all
stage invariants, including local survival of the affine fibers.

\smallskip\noindent\emph{Fair scheduling and factoriality of the union.}
Enumerate the elements of all countably many full stages and
repeat their tasks.  When an element is currently an irreducible
horizontal nonunit, perform its horizontal action; perform a
mixed action at each lower prime cofinally.  Each final
irreducible is irreducible in every later full stage containing
it, since nonunits remain nonunits under the dominating valuation.
Thus an eventual horizontal irreducible receives arbitrarily
late actions.  The integer valuation bounds every factorization
in the union \(A\), so it is atomic; an irreducible is prime
in all later full UFD stages, hence is prime in the union.
Therefore \(A\) is a local UFD.  The domain special fibers
and the preserved lower primes remain domains at the limit;
noninjective special-fiber transition maps cause no difficulty,
since products are witnessed at a finite stage.

Write \(G=A[1/\pi]\).  Repeated mixed actions put every old
element of \(G/aG\) into a finite field over \(B/aB\).
An old denominator has nonzero image in this field, since otherwise
the localized quotient would be zero and \(a\) would be a unit.
Thus this includes old localized elements.
Its nonzero elements keep their inverses under subsequent maps,
so \(G/aG\) is algebraic over that field and hence is itself
a field.  After inverting \(B\setminus\{0\}\), repeated
horizontal actions have the same consequence for every
horizontal prime quotient, now over \(F\).  Thus the latter
localization has dimension at most one.  If \(k=0\), this
conclusion requires no horizontal action: every nonzero prime
quotient of each affine generic curve over \(F\) is already
algebraic over \(F\), and a persistent nonzero prime witness
retains that property in every later chart.

A nonzero horizontal prime cannot lie below \(aG\), since
the latter is principal of height one in a UFD.  These facts
give \(\dim G\le1\).  A UFD of dimension at most one is a
PID: the gcd of a nonzero ideal is the gcd of finitely many
of its elements, since one fixed nonzero element has only
finitely many prime factors.  Divide the ideal by that gcd.
If the resulting ideal were proper, a maximal ideal above it
would be generated by a prime element (every nonzero prime
has height one), contradicting that the finite gcd is one.
Consequently a height-one prime \(q\) of \(A\) avoiding \(\pi\)
has quotient fraction field algebraic over \(F\) if \(q\cap C=0\),
or algebraic over \(B/aB=\operatorname{Frac}(C/aC)\) if its
contraction is the lower prime \((a)\).

\smallskip\noindent\emph{The boundary.}
Let \(\mathcal D=A/\pi A\), let
\(\epsilon:\mathcal D\to D\) be the augmentation, and put
\(M=\ker\epsilon\).  Every passive block is eventually
processed into a later curve.  To justify arbitrarily late
actions when \(k>0\), the nonzero old curve coordinate has
nonzero special image in every later curve and has positive
\(\nu\)-value.  A final irreducible factor other than \(\pi\)
is either horizontal or a lower prime; its repeated queue
therefore supplies such actions.  For \(k=0\) there is no
passive block to consume.

Every finite subset of \(L\mathcal D\) consequently lies in a
localization of an affine curve with rational fraction field
over \(L\).  Finite witnesses of strict prime inclusions show
that this ring has dimension at most one.  Its nonzero
augmentation prime has residue field \(L\), giving equality.
A persistent nonzero prime witness in a curve chart makes all
later residue fields algebraic over \(L\); hence every nonzero
prime of \(L\mathcal D\) has algebraic residue field.  Its
fraction field has transcendence degree one over \(L\): old
curve fields inject into the later curve fields, each finitely
many passive coordinates eventually becomes algebraic over an
old curve parameter, and that parameter stays transcendental.

If \(Q\) is a prime of \(\mathcal D\) contracting to
\(0\ne p\subset D\), lift a nonzero element of \(p\) to
\(C\) and factor it there.  Some prime factor \(a\ne\pi\)
has nonzero reduction in \(p\).  At a mixed \(a\)-action the
equation \((\bar a)^eX=x_s^\ell\) forces \(x_s\) into
\(Q\); nilpotence modulo \(x_s\) forces all earlier curve
augmentation elements into \(Q\).  Repeating after every
passive block has been consumed gives \(M\subset Q\).  Thus
\begin{equation}\label{eq:relative-boundary-primes}
Q=\epsilon^{-1}(p)\quad\text{if }Q\cap D=p\ne0.
\end{equation}

Here are the dimension details needed from these boundary facts.
For a nonzero zero-contraction boundary prime \(P\), the first
capacity \(\tau_{\mathcal D}(0,P)\) is zero: a centered
valuation is nontrivial and trivial on \(L\), and one positive
element together with independent residue lifts are algebraically
independent over \(L\).  A field of transcendence degree one
therefore leaves no residue transcendence.  Also
\(\operatorname{Frac}(\mathcal D/P)/L\) is algebraic.  Write
\(P_p=\epsilon^{-1}(p)\) for \(p\ne0\).  Capacities between
two such primes are exactly those of \(D\), because the
quotients are \(D/p\).  Lemma~\ref{lem:residue-extension} gives
\[
 \tau_{\mathcal D}(0,P_p)\le1+\tau_D(0,p),\qquad
 \tau_{\mathcal D}(P,P_p)\le\tau_D(0,p).
\]
A flag has at most one nonzero zero-contraction prime.  If it
jumps directly to \(P_p\), use
\(1+\min(n,1+\tau)\le2+\min(n,\tau)\); if it first
passes through \(P\), use its zero first capacity and the
second inequality.  All later contributions are those of a
flag of \(D\).  Theorem~\ref{thm:capacity} gives the upper
bound below.  For the lower bound lift a chain through the
surjection to \(D[n]\) and prepend zero below \(M[n]\).
Consequently
\begin{equation}\label{eq:relative-boundary-profile}
\dim\mathcal D[n]=1+\dim D[n].
\end{equation}

\smallskip\noindent\emph{Capacity and exact profile.}
Let \(h\) be the inverse image of \(M\) in \(A\).  Its
contraction to \(C\) is \((\pi)\), with residue field \(L\).
The valuation \(w\) dominates \(A_h\): a denominator outside
\(h\) has nonzero lower augmentation, hence value zero.
Equation~\eqref{eq:relative-gauss} gives \(\tau_A(0,h)\ge t\).
Conversely a valuation centered there restricts on \(F\) to
the DVR \(C_{(\pi)}\).  The residue extension inequality and
\(\operatorname{trdeg}_F F^*=t\) give the reverse inequality.
Thus \(\tau_A(0,h)=t\).  The prime \(\pi A\) has height
one, the generic ring is a PID, and the boundary localization
at \(M\) has dimension one, so \(\operatorname{ht}h=2\).

The prime \(\pi\) and the entire boundary give
\(\dim A[n]\ge\dim C[n]+1\).  Concatenating the edge
\(0<h\), of capacity \(t\), with a flag of \(A/h=D\)
gives \(\dim A[n]\ge\dim C[n]+\min(n,t)\).
We prove the reverse bound explicitly.  The prime classification
above makes \(A\) finite-dimensional: a flag contains at most
one nonzero prime avoiding \(\pi\), followed by a boundary
flag.  Apply Theorem~\ref{thm:capacity} to flags beginning at zero.

If the first nonzero prime is \(\pi A\), its first capacity
is zero since \(A_{(\pi)}\) is a DVR; the rest is bounded by
\eqref{eq:relative-boundary-profile}, giving \(\dim C[n]+1\).
If that first prime is \(q\) avoiding \(\pi\), it has height
one and first capacity zero.  For \(q\cap C=0\), delete this
one initial edge and contract the rest to \(C\).  The
contractions are strict: at most one nonzero generic-curve
prime lies over \((\pi)\), and higher primes are the distinct
augmentation preimages in \eqref{eq:relative-boundary-primes}.
The field \(\operatorname{Frac}(A/q)\) is algebraic over
\(F\); subsequent fields are lower quotient fields or
algebraic extensions of them.  The residue extension inequality
bounds every remaining capacity by its lower counterpart.  This
case contributes at most \(\dim C[n]+1\).  If instead
\(q\cap C=(a)\ne0\), all subsequent lower contractions
contain both \(a\) and \(\pi\), so are augmentation preimages.
Contract the whole flag to \(0<(a)<\cdots\); algebraicity of
the quotient field over \(\operatorname{Frac}(C/aC)\) bounds
all capacities without any extra edge.  The bound is \(\dim C[n]\).

Finally suppose the first nonzero prime \(Q\) properly contains
\(\pi\).  Its contraction \(c\) is nonzero, and contractions
of the whole flag are strict for the same boundary reason.
Only the first edge can gain capacity: restricting a centered
valuation from \(F^*\) to \(F\) gives
\[
 \tau_A(0,Q)\le t+\tau_C(0,c),\qquad
 \min(n,\tau_A(0,Q))
 \le\min(n,t)+\min(n,\tau_C(0,c)).
\]
The endpoint residue extension is algebraic for a generic-curve
prime, and identical for an augmentation preimage.  Every later
capacity is bounded by the corresponding contracted one.  The
flag lengths agree, giving the bound \(\dim C[n]+\min(n,t)\).
These cases prove \eqref{eq:relative-profile}.

\smallskip\noindent\emph{Infinite capacity.}
For \(t=\infty\), start with \(F(x,z_1,z_2,\ldots)\) and
the two Gauss valuations, still integer-valued.  Introduce one
fresh independent positive variable at each round; each stage
uses a finite tuple.  At every horizontal or mixed action process
the entire current tuple, with exponents chosen for that finite
stage.  The finite affine and saturation proofs apply literally.
Each old variable is consumed into a later boundary curve, and
each old generic quotient element is covered by a later action
at its final prime.  Thus the same boundary, factorization, and
generic PID arguments hold.  The fixed valuation \(w\) now has
residue transcendence degree infinity over \(L\), so
\(\tau_A(0,h)=\infty\).  In the last flag bound simply use
\(\min(n,\tau)\le n\); all other cases are unchanged.  This
proves the theorem for infinite capacity as well.
\end{proof}

\section{Association extensions and finite maxima}\label{sec:gluing}

\begin{definition}
A faithfully flat local inclusion of domains \(A\subset S\) is an
\emph{association extension} if every nonzero \(s\in S\) has the
form \(s=au\), with \(a\in A\) and \(u\in S^\times\).
\end{definition}

\begin{lemma}\label{lem:association}
For an association extension \(A\subset S\), extension and
contraction identify the ideal lattices and the prime spectra.
The property persists under corresponding prime quotients and
localizations. If \(A\) is a UFD, then \(S\) is a UFD, and
\(\dim S[n]=\dim A[n]\) for every \(n\ge0\).
\end{lemma}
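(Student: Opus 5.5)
The plan is to derive everything from two facts: ideals of \(A\) are detected by ideals of \(S\) (faithful flatness), and every ideal of \(S\) is generated by elements of \(A\) (the association property). I then use the prime-flag formula of \cref{thm:capacity} for the dimension statement.

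\emph{Ideals and primes.} Faithful flatness gives \(IS\cap A=I\) for every ideal \(I\) of \(A\). Conversely, let \(J\) be an ideal of \(S\) and write each \(s\in J\) as \(s=au\) with \(a\in A\) and \(u\) a unit. Then \(a=su^{-1}\in J\cap A\), so \(J=(J\cap A)S\). Thus extension and contraction are inverse, inclusion-preserving bijections on ideal lattices. Contraction preserves primality. For a prime \(\mathfrak p\) of \(A\), the ideal \(\mathfrak pS\) is proper and is the extension of its contraction. If \(xy\in\mathfrak pS\), write \(x=au\) and \(y=bv\); then \(ab\in\mathfrak pS\cap A=\mathfrak p\), so \(\mathfrak pS\) is prime.

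\emph{Persistence.} The map \(A/\mathfrak p\to S/\mathfrak pS\) is a base change of a faithfully flat map, so it is faithfully flat. It is injective by contraction, local, and its target is a domain, and the association property passes to residues. For a localization \(A_{\mathfrak p}\to S_{\mathfrak pS}\), flatness is preserved, the map is local because the primes correspond, and it is therefore faithfully flat. Fractions \(s/t\) factor as \((a/b)\) times a unit because \(t\notin\mathfrak pS\) forces its associate \(b\notin\mathfrak p\).

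\emph{UFD.} The units of \(A\) are the elements of \(A\) that are units in \(S\), by contraction of the unit ideal. Every nonzero \(s\in S\) is \(au\); factor \(a=\prod p_i\) in \(A\). Each \(p_iS\) is prime by the bijection, so \(s\) is a unit times a product of prime elements, which makes \(S\) a UFD (a domain in which every nonzero nonunit is a product of primes).

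\emph{Dimension.} I apply \cref{thm:capacity} to both rings, whose flags correspond bijectively, so I must show \(\tau_S(\mathfrak pS,\mathfrak qS)=\tau_A(\mathfrak p,\mathfrak q)\). Passing to the quotient by \(\mathfrak p\) and localizing at \(\mathfrak q\), which preserves the hypothesis by the persistence step, reduces this to \(\tau_S(0,\mathfrak m_S)=\tau_A(0,\mathfrak m_A)\) for local association extensions.

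\emph{The main obstacle: comparing capacities.} Write \(K=\operatorname{Frac}A\) and \(E=\operatorname{Frac}S\). The association property makes \(E^\times=K^\times S^\times\), and \(S^\times\cap K^\times\) consists of the units of \(A\). Given a valuation \(W\) of \(E\) dominating \(S\), the units of \(S\) have value \(0\), so \(V=W\cap K\) has the same value group and dominates \(A\). The residue field \(\kappa(W)\) is generated over \(\kappa(V)\) by residues of units of \(S\); these lie in the residue field of \(S\) modulo \(\mathfrak m_S\). The residue fields agree, since \(S/\mathfrak m_S\) is generated over \(A/\mathfrak m_A\) by images of units \(u\), and \(u-a\in\mathfrak m_S\) for a suitable \(a\in A\) by applying association to \(u-a\). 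This gives \(\kappa(W)=\kappa(V)\) and hence \(\tau_S\le\tau_A\). For the reverse inequality, I extend a valuation \(V\) dominating \(A\) to \(E\) by \(w(au)=v(a)\). Well-definedness uses \(S^\times\cap K^\times=A^\times\). The ultrametric inequality is the delicate point: for \(au+bu'\), first write \(u+ (b/a)u'\) as an element of \(S\) when \(v(b)\ge v(a)\) after clearing denominators in \(A\). If this direct extension is hard to verify, the fallback is to take any valuation extending \(V\) and dominating \(S\), which exists by \(\mathfrak m_AS\subseteq\mathfrak m_S\) together with the standard existence theorem, and then reuse the residue computation above. Equality of capacities then gives \(\dim S[n]=\dim A[n]\) by \cref{thm:capacity}.
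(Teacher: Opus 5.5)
\textbf{There is a genuine gap in the dimension part: the reverse inequality \(\tau_S\ge\tau_A\) rests on a false claim about residue fields.}

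Your treatment of ideals, spectra, persistence and factoriality is correct. Checking primality of \(\mathfrak pS\) directly, via \(ab\in\mathfrak pS\cap A=\mathfrak p\), is a fine substitute for the paper's lying-over argument.

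\emph{The false claim.} You assert that \(A/\mathfrak m_A\to S/\mathfrak m_S\) is an isomorphism. Association imposes nothing on residue fields. Any field extension \(k\subsetneq\ell\) is an association extension. So is \(\mathbb Q[x]_{(x)}\subset\mathbb Q(i)[x]_{(x)}\): it is free of rank two and local, and every nonzero element is \(x^n\) times a unit. Its residue fields are \(\mathbb Q\subsetneq\mathbb Q(i)\). Applying association to \(u-a\) only yields \(u-a=a'u'\); nothing puts \(a'\) in \(\mathfrak m_A\).

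\emph{The upper bound survives.} What your computation actually shows is \(\kappa(W)=\kappa(V)\kappa(S)\), not \(\kappa(W)=\kappa(V)\). This is enough for \(\tau_S\le\tau_A\): since \(\kappa(W)\) is generated over \(\kappa(S)\) by \(\kappa(V)\), you get \(\operatorname{trdeg}_{\kappa(S)}\kappa(W)\le\operatorname{trdeg}_{\kappa(A)}\kappa(V)\). That is exactly the paper's argument.

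\emph{The lower bound does not.} Both your direct extension and your fallback ("reuse the residue computation") rely entirely on the false identification. With \(\kappa(S)\) possibly larger than \(\kappa(A)\), you would have to show that residues in \(\kappa(V)\) that are algebraically independent over \(\kappa(A)\) remain independent over \(\kappa(S)\) inside \(\kappa(W)\). This is true, but it needs flatness in an essential way, for instance the equational criterion applied to a relation among cleared numerators. Nothing in your sketch provides it.

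Your ultrametric hint is also off, since \(u+(b/a)u'\) need not lie in \(S\). What makes \(w(au)=v(a)\) a valuation is \((a,b)S\cap A=(a,b)A\). This lets you write \(au+bu'=cu''\) with \(c\in(a,b)A\), so \(v(c)\ge\min(v(a),v(b))\).

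\emph{How the paper avoids this.} It proves only \(\tau_S\le\tau_A\), which gives \(\dim S[n]\le\dim A[n]\) by \cref{thm:capacity}. It then obtains \(\dim S[n]\ge\dim A[n]\) directly from faithful flatness of \(A[n]\to S[n]\): lift the top prime of a chain and apply going-down. Replacing your reverse-inequality argument with this one closes the gap.

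Note also that \cref{thm:capacity} requires \(\dim A<\infty\). The infinite case should be handled separately; it is immediate from the spectral bijection, since then both polynomial dimensions are infinite.
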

\begin{proof}
If \(J\) is an ideal of \(S\) and \(s=au\in J\), then
\(a\in J\cap A\). Hence \(J=(J\cap A)S\). Faithful flatness
gives \(IS\cap A=I\) for every ideal \(I\) of \(A\). It also
supplies a prime above every prime \(p\) of \(A\); that prime
must be \(pS\). Quotients and localizations preserve both
association and faithful flatness. Prime elements of a UFD \(A\)
therefore remain prime in \(S\), and factoring the associated
element \(a\) proves factoriality of \(S\).

For corresponding primes \(p<q\), write
\(B=(A/p)_{q/p}\) and \(T=(S/pS)_{qS/pS}\). Let \(W\)
be a valuation of \(\operatorname{Frac}T\) dominating \(T\),
and let \(V\) be its restriction to \(\operatorname{Frac}B\).
Every nonzero fraction is \(z=(a/b)u\), with \(a,b\in B\)
and \(u\in T^\times\). If \(z\) has value zero, so does
\(a/b\), and its residue lies in the compositum
\(\kappa(V)\kappa(T)\) inside \(\kappa(W)\). Conversely both
fields are subfields of \(\kappa(W)\). Thus
\[
 \kappa(W)=\kappa(V)\kappa(T),\qquad
 \operatorname{trdeg}_{\kappa(T)}\kappa(W)
 \le\operatorname{trdeg}_{\kappa(B)}\kappa(V).
\]
Consequently every capacity of \(S\) is at most the corresponding
capacity of \(A\). If \(\dim A<\infty\), \cref{thm:capacity}
gives \(\dim S[n]\le\dim A[n]\). The reverse inequality
follows from faithful flatness of \(A[n]\to S[n]\): lift a
prime above the top of a finite chain and apply going-down.
If \(\dim A=\infty\), this lower bound already makes both
dimensions infinite.
\end{proof}

\begin{lemma}\label{lem:semilocal-ufd}
A semilocal domain whose maximal localizations are UFDs is a UFD.
\end{lemma}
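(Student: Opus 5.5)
Let \(R\) be a semilocal domain with maximal ideals \(\mathfrak m_1,\ldots,\mathfrak m_k\), each \(R_{\mathfrak m_i}\) a UFD. I will use the standard characterization: a domain is a UFD if and only if it is atomic and every irreducible element is prime. Equivalently, I can use the criterion in \cref{rem:krull-preliminaries}: a Krull domain is a UFD exactly when every height-one prime is principal. I plan to take the Krull route, since locality of both conditions is easy to control.

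\textbf{Step 1: \(R\) is Krull.} We have \(R=\bigcap_i R_{\mathfrak m_i}\), and each \(R_{\mathfrak m_i}\) is Krull because it is a UFD. A finite intersection of Krull domains with a common fraction field is Krull: the union of their height-one DVR families still has finite character, and the resulting essential valuations are localizations of \(R\) at height-one primes.

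\textbf{Step 2: every height-one prime \(\mathfrak p\) of \(R\) is principal.} This is the main step. For each \(i\), the ideal \(\mathfrak pR_{\mathfrak m_i}\) is either the unit ideal or a height-one prime, hence principal, say generated by \(x_i\in\mathfrak p\). Since \(\mathfrak p\) is a divisorial (\(v\)-)ideal in a Krull domain, it suffices to find one element \(x\in\mathfrak p\) with \(v_{\mathfrak p}(x)=1\) and \(v_{\mathfrak q}(x)=0\) for every other height-one prime \(\mathfrak q\). Then \(xR\) and \(\mathfrak p\) have the same divisor, so \(\mathfrak p=xR\).

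\textbf{Step 3: constructing \(x\).} I would obtain \(x\) by prime avoidance and the Chinese remainder theorem over the finitely many maximal ideals. Choose \(y_i\in R\) with \(y_i\equiv1\pmod{\mathfrak m_i}\) and \(y_i\in\mathfrak m_j\) for \(j\ne i\), and form combinations \(x=\sum_i y_i^{N}x_i\). For \(N\) large, the image of \(x\) in each \(R_{\mathfrak m_i}\) is \(x_i\) times a unit plus an element of higher \(\mathfrak m_i\)-adic content. The delicate point is controlling the other height-one primes \(\mathfrak q\) through which \(x\) might pass.

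A cleaner alternative that avoids this bookkeeping is to show that \(\mathfrak p\) is invertible and then that \(\operatorname{Pic}R=0\). Invertibility is immediate: \(\mathfrak p\) is locally principal and finitely generated, since it is a divisorial ideal of finite type in a semilocal ring. Then a locally principal (rank-one projective) ideal over a semilocal ring is principal, because projective modules of constant rank over a semilocal ring are free. Finite generation follows because \(\mathfrak p^{-1}\mathfrak p\) is locally the unit ideal, so \(\mathfrak p\mathfrak p^{-1}=R\), and a relation \(1=\sum a_jb_j\) exhibits finitely many generators.

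I will follow this second route as the main line. The \(\operatorname{Pic}\) step, proved via Nakayama modulo the Jacobson radical using \(R/J\cong\prod_i R/\mathfrak m_i\), is the step I expect to require the most care. The \(\mathfrak p\mathfrak p^{-1}=R\) computation must be checked at each \(\mathfrak m_i\): either \(\mathfrak p\not\subseteq\mathfrak m_i\), or \(\mathfrak pR_{\mathfrak m_i}=x_iR_{\mathfrak m_i}\) and \(\mathfrak p^{-1}R_{\mathfrak m_i}\supseteq x_i^{-1}R_{\mathfrak m_i}\). The latter holds because inverse formation commutes with localization for divisorial ideals in Krull domains.
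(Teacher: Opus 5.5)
Your proof is correct and follows the paper's route. Both arguments show the domain is Krull as a finite-character intersection of its local UFDs, show every height-one prime is invertible, and conclude because an invertible (rank-one projective) ideal of a semilocal ring is principal by the Chinese remainder theorem and Nakayama. The only difference is how you get invertibility. The paper notes that the local generators \(x_i\in\mathfrak p\), together with an element of \(\mathfrak p\setminus\mathfrak m_i\) wherever \(\mathfrak p\not\subseteq\mathfrak m_i\), generate an ideal that agrees with \(\mathfrak p\) at every maximal ideal and hence equals \(\mathfrak p\). That avoids your appeal to \(\mathfrak p^{-1}\) commuting with localization, which is true in a Krull domain via the divisor description but which you only assert, and it makes your unsupported phrase ``divisorial ideal of finite type'' unnecessary.
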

\begin{proof}
A domain is the intersection of its maximal localizations in its
fraction field. Each UFD is the finite-character intersection of
its prime-element DVRs; the union of finitely many such families
still has finite character. Thus the given domain is Krull.
A height-one prime is principal at each maximal localization.
Choose an element of the prime generating its localization at each
maximal ideal, including a unit generator when that localization
is the unit ideal. These finitely many elements generate the prime,
since equality of ideals is detected at all maximal ideals.
It is therefore an invertible ideal. An invertible ideal of a
semilocal ring is principal: use the Chinese remainder theorem
to choose a generator modulo every maximal ideal and apply
Nakayama's lemma locally. Hence every height-one prime is
principal, and the Krull domain is factorial.
\end{proof}

\begin{theorem}\label{thm:separation}
Let \(R\) be a countable semilocal UFD containing \(\mathbb Q\),
with maximal ideals \(\mathfrak m_1,\ldots,\mathfrak m_s\).
There is a countable semilocal UFD \(S\supset R\), with precisely
\(s\) maximal ideals \(M_i\) contracting to \(\mathfrak m_i\),
such that every \(R_{\mathfrak m_i}\subset S_{M_i}\) is an
association extension and every nonzero prime of \(S\) is
contained in exactly one maximal ideal.
\end{theorem}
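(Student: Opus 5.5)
We plan to build \(S\) as a countable directed union of semilocal UFDs \(R=S_0\subset S_1\subset\cdots\), where each step is obtained from the previous one by adjoining a fraction that "separates" one pair of comaximal data. The target property says that no nonzero prime sits in two maximal ideals. Since \(S\) will be a UFD, every nonzero prime contains a prime element. It therefore suffices to arrange that every prime element of \(S\) lies in exactly one \(M_i\), and that the spectrum below each \(M_i\) is controlled by the association extension \(R_{\mathfrak m_i}\subset S_{M_i}\). We will get the latter from \cref{lem:association}, which identifies the spectrum of \(S_{M_i}\) with that of \(R_{\mathfrak m_i}\). So the work is to kill, one at a time, the prime elements \(f\) lying in at least two maximal ideals. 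We do this by replacing \(f\) with an associate of one unit factor per branch.

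\textbf{Elementary step.} Let \(T\) be a current semilocal UFD and \(f\) a prime element lying in \(\mathfrak m_{i}\) for \(i\in I\), with \(|I|\ge2\). Choose an index \(i_0\in I\). By the Chinese remainder theorem choose \(e\in T\) with \(e\equiv1\) modulo \(\mathfrak m_{i_0}\) and \(e\in\mathfrak m_i\) for \(i\ne i_0\). Adjoin a new indeterminate \(Y\), and write \(T'\) for the localization of \(T[Y]/(fY-\ldots)\) away from the union of the extended maximal ideals. The equation is designed so that, at branch \(i_0\), \(f\) becomes a unit times an element of \(T\) (nothing changes there), while at branches \(i\ne i_0\) the image of \(f\) becomes a unit of \(T'_{M_i}\).

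The cleanest device we would try is to adjoin \(u\) with \(f=e\,u+ (1-e)\,u'\)-type relations, or more simply a generic element \(Y\) with \(f\,Y\equiv\) a unit at the other branches. Concretely, we would pass to the semilocalization of \(T[Y]\) at the \(s\) ideals \(N_i=(\mathfrak m_i,\,Y-\delta_i)\), with rational \(\delta_i\). We would then use the element \(g=f+Y\cdot(\text{element of }\bigcap_{i\ne i_0}\mathfrak m_i\setminus\mathfrak m_{i_0})\)-style corrections, so that the prime divisor of \(g\) is new and lies only in \(N_{i_0}\).

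Each such step has three properties:
\begin{itemize}
\item it is a polynomial extension followed by a localization, hence flat;
\item it is a UFD by \cref{lem:semilocal-ufd}, being semilocal with polynomial-localization maximal localizations;
\item it is local at each branch, so \(T_{\mathfrak m_i}\to T'_{N_i}\) is faithfully flat.
\end{itemize}

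\textbf{Association.} For \(R_{\mathfrak m_i}\subset S_{M_i}\) to be an association extension, every element of the union must be, at branch \(i\), an element of \(R\) times a unit. We would enforce this by a countable enumeration, in the style of the requirements in \cref{sec:ordinary}. For each element \(g\) of a finite stage and each branch \(i\), write the polynomial \(g\) in the new variables. We then substitute for the next variable a fraction that makes \(g\) equal, locally at \(N_i\), to its constant term times \(1+(\text{maximal})\). This is the analogue of \eqref{eq:ordinary-denominator}, obtained by refining the new coordinates through arrows \(Y=\delta_i+c\,Y'^2\), with \(c\) absorbing denominators. Constant terms lie in the previous stage. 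Inducting through the stages, each element becomes associated, at each branch, to an element of \(R\).

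Faithful flatness at each branch passes to the union as a filtered colimit of faithfully flat local maps. \cref{lem:association} then gives factoriality locally and identifies the local spectra. Every prime element of the union is, at each branch, associated with a prime of \(R_{\mathfrak m_i}\). The separation requirement then says: for a nonzero prime \(P\subset M_i\cap M_j\), pick a prime element \(f\in P\). Some stage treats \(f\), making it a unit at one of the two branches, a contradiction.

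\textbf{Main obstacle.} The hardest step is making the two families of requirements compatible. The association arrows must be local at every branch simultaneously, and the separation step must not destroy associations already achieved. Both requirements are needed together, since association alone keeps \(\operatorname{Spec}S_{M_i}\) equal to \(\operatorname{Spec}R_{\mathfrak m_i}\), which inherits shared primes. We would resolve this by choosing the branch centers \(\delta_i\) distinct and rational, and using the factors \(b_j(p)\) as in \eqref{eq:ordinary-full-arrow}. Then an adjustment at branch \(i_0\) is multiplied by an element that is a unit at \(i_0\) and of high order at the other branches, so earlier identities persist at those branches. Separation then comes from replacing a doubly-centered prime \(f\) by \(f\cdot(1+\text{new coordinate})\)-factorizations, which split it into branch-specific associates.
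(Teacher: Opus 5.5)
\textbf{The separation step cannot work.} In your elementary step you require a prime \(f\in\mathfrak m_i\), for \(i\in I\setminus\{i_0\}\), to become a unit of \(T'_{M_i}\). You also require \(T_{\mathfrak m_i}\to T'_{M_i}\) to be local, and the theorem itself requires \(M_i\) to contract to \(\mathfrak m_i\). These demands are incompatible. A local homomorphism sends \(\mathfrak m_i\) into \(M_i\), so an element of \(\mathfrak m_i\) remains a nonunit at branch \(i\) at every later stage. The same problem undercuts your closing argument (``some stage treats \(f\), making it a unit at one of the two branches''): once a prime element lies in \(M_i\cap M_j\), it stays there forever.

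Your alternative device does not help. In the semilocalization of \(T[Y]\) at the ideals \((\mathfrak m_i,Y-\delta_i)\), the element \(f\) is still prime and still lies in every \(N_i\), \(i\in I\). Giving \(g=f+Y(\cdots)\) a new prime divisor at one center leaves \(f\) untouched. Likewise \(f(1+\text{new coordinate})\) is merely \(f\) times a unit at the centers. In fact, in any UFD \(S\) satisfying the theorem, an original prime \(p\) lying in two maximal ideals cannot remain prime, since \(pS\) would then be a prime under two maximal ideals. So no construction that only rescales shared primes by units, or adds new primes beside them, can succeed.

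\textbf{The missing idea} is to \emph{factor} each shared original prime into pieces, each a nonunit at exactly one center, while keeping every local chart a polynomial localization over \(R_{\mathfrak m_i}\). For each prime \(p\) of \(R\) whose support \(I(p)=\{i:p\in\mathfrak m_i\}\) has at least two elements, the paper adjoins a block \(\prod_{i\in I(p)}t_{p,i}=p\). Here \(t_{p,i}\) has residue zero at center \(i\) and residue one at the other centers of \(I(p)\). At center \(i\), all factors except \(t_{p,i}\) are units, and \(t_{p,i}\) is eliminated by the relation. The local ring is therefore \(R_{\mathfrak m_i}\) with free unit coordinates adjoined and localized, which is faithfully flat over \(R_{\mathfrak m_i}\) and a UFD. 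Moreover \(t_{p,i}\) is an associate of \(p\) there, which is exactly what association permits.

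Separation then needs only the original primes. By association, every nonzero prime of \(S\) contains an original prime factor \(p\). If \(p\) is shared, the block forces some singly supported \(t_{p,k}\) into that prime.

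Your association step also needs more than you supply. For a general semilocal \(R\) there is no parameter with rational centers to provide the factors \(b_j(p)\) of \cref{sec:ordinary}. The refinement arrows must be simultaneously local at all centers and injective. The paper obtains this from Chinese-remainder approximations \(H_j^0\) of the required unit ratios, together with a Jacobian computation that fixes \(M\) and the rational \(\theta_j\).
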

\begin{proof}
If \(s=1\), take \(S=R\). Assume \(s\ge2\), and choose
\(0\ne\pi_i\in\mathfrak m_i\setminus\bigcup_{j\ne i}
\mathfrak m_j\) by the Chinese remainder theorem.

\emph{Finite stages.}
For each prime element \(p\) of the original UFD \(R\) lying
in at least two maximal ideals, let
\(I(p)=\{i:p\in\mathfrak m_i\}\), and introduce a block
\begin{equation}\label{eq:product-block}
 \prod_{i\in I(p)}t_{p,i}=p.
\end{equation}
At the \(i\)-th center with \(i\in I(p)\), give \(t_{p,i}\)
residue zero and every other factor residue one. Outside
\(I(p)\), give one fixed distinguished factor residue \(\bar p\)
and the others residue one. Use finitely many independent
blocks and localize outside the union of these \(s\) centers.
The resulting ring \(T\) is a domain: over a domain and a
nonzero coefficient \(p\), the product relation has the usual
normal forms with no monomial divisible by the product of all
variables, which embed after eliminating one variable in the
rational function field. Apply this successively to the blocks.

At each center, all factors but its distinguished factor are
units, and that factor is eliminated by \eqref{eq:product-block}.
The local ring is precisely
\begin{equation}\label{eq:flat-chart}
 T_i=R_{\mathfrak m_i}[\text{free unit coordinates}]
 _{(\mathfrak m_i,\text{coordinates}-1)}.
\end{equation}
It is faithfully flat over the fixed original \(R_{\mathfrak m_i}\)
and is a UFD. Thus \(T\) is a semilocal UFD by
\cref{lem:semilocal-ufd}. Adjoining a further independent block
gives an injective map preserving all centers.

\emph{A simultaneous coordinate refinement.}
Hold all but one block fixed and write the moving block as
\(t_1\cdots t_k=p\), \(k\ge2\). At center \(i\), denote
its distinguished index by \(d(i)\). Prescribe unit values
\(\lambda_{ij}\in1+\mathfrak m_iR_{\mathfrak m_i}\) for
\(j\ne d(i)\), and define local units
\begin{equation}\label{eq:desired-ratios}
 h_{ij}=\lambda_{ij}/t_j\quad(j\ne d(i)),\qquad
 h_{i,d(i)}=
 \frac{\prod_{j\ne d(i)}t_j}{\prod_{j\ne d(i)}\lambda_{ij}}.
\end{equation}
Their product is one, and each has residue one.
Choose nonzero \(\rho_i\in T\) supported only at the \(i\)-th
center, monomial in each moving block up to a coefficient in
the other blocks. Then \(T/(\rho_i)\) is local. In particular,
the image of each \(h_{ij}\in T_i\) is defined in that quotient.
The ideals \((\rho_i)\) are pairwise comaximal, so for \(j<k\)
the Chinese remainder theorem supplies \(H_j^0\in T\) with
\[
 H_j^0-h_{ij}\in\rho_iT_i\qquad(1\le i\le s).
\]
Every \(H_j^0\) is a unit of \(T\).

Set \(L=\prod_i\rho_i\), and choose \(M\ge0\) and rational
\(\theta_1,\ldots,\theta_{k-1}\). In a fresh copy of the same
abstract stage, define the old coordinates by
\begin{align}
 H_j&=H_j^0+\theta_jLt_j^M,\qquad
 t_j^{\mathrm{old}}=t_jH_j &&(j<k),\label{eq:refine-product}\\
 t_k^{\mathrm{old}}&=t_k/(H_1\cdots H_{k-1}).\notag
\end{align}
The added terms lie in every \(\rho_iT_i\). Hence all \(H_j\)
remain units; the product relation, centers, and residue maps
are preserved. Since \(\prod_jh_{ij}=1\), every free coordinate
satisfies
\begin{equation}\label{eq:coordinate-precision}
 t_j^{\mathrm{old}}-\lambda_{ij}\in\rho_iT_i
 \qquad(j\ne d(i)).
\end{equation}

The parameters can be chosen to make this map injective.
Let \(F\) be the fraction field of the original coefficients
and fixed blocks. The first \(k-1\) moving coordinates are
independent over \(F\), and
\(L=C\prod_{j<k}t_j^{\beta_j}\), with \(C\in F^\times\)
and integers \(\beta_j\). The coefficient of
\(\prod_{j<k}\theta_j\) in the Jacobian of the first \(k-1\)
functions in \eqref{eq:refine-product} is the Jacobian of
\(G_j=Lt_j^{M+1}\). Up to a nonzero scalar and Laurent monomial
it equals
\begin{equation}\label{eq:jacobian}
 \det((M+1)I+\mathbf1\beta^{\mathsf T})
 =(M+1)^{k-2}\left(M+1+\sum_{j<k}\beta_j\right).
\end{equation}
Choose \(M\) large enough to make this nonzero. A nonzero
polynomial over a field cannot vanish on all of
\(\mathbb Q^{k-1}\); choose \(\theta\) giving a nonzero
Jacobian. In characteristic zero the resulting functions are
algebraically independent. Thus the field map, and therefore
the centered map of semilocal stages, is injective.

\emph{Processing one element.}
Let \(0\ne f\in T\). In each chart \eqref{eq:flat-chart},
express it as a quotient of Laurent polynomials, with numerator
\(h_i\ne0\) and unit denominator. Choose the finitely many
free coordinates independently in \(1+\pi_i\mathbb Q\) so
that \(a_i=h_i(\lambda_i)\ne0\). The denominator remains a
unit because the values have the prescribed residues.
Write \(a_i=r_i/u_i\), with \(r_i,u_i\in R\) and
\(u_i\notin\mathfrak m_i\). Factor \(r_i\) in the original
UFD \(R\), and introduce missing blocks for its shared prime
factors. Replace each factor \(p\in\mathfrak m_i\) by
\(t_{p,i}\) when \(p\) is shared, retaining \(p\) when it
is supported only there. Omit factors outside \(\mathfrak m_i\).
The resulting product \(\mu_i\) is associated to \(a_i\)
in \(T_i\) and is a unit at every other center. Put
\[
 \rho_i=\pi_i\mu_i^2\in a_i^2\mathfrak m_iT_i.
\]
These are monomials of the required kind.

Refine each of the finitely many old blocks in turn using
\eqref{eq:coordinate-precision}, holding the others fixed.
The values \(a_i\) lie in the fixed original base. Therefore
every later centered map preserves the earlier congruences
modulo \(a_i^2\mathfrak m_i\), even when it changes a factor
appearing in a newly chosen \(\rho_i\). At the end,
\[
 h_i=a_i+a_i^2z_i=a_i(1+a_i z_i),\qquad
 z_i\in\mathfrak m_iT_i.
\]
The last factor and the original denominator are units. Thus
\(f\) is associated to the original \(a_i\) at every center.
This is an exact finite identity preserved by all later maps.

\emph{The limit.}
Enumerate every shared original prime and every element of every
created stage. Interleave adjoining the missing blocks and the
finite processing just described so that every task is reached.
The injective direct limit \(S\) is countable. Its preserved
residue maps give precisely \(s\) maximal ideals: any element
outside their union is a unit already at a finite stage.
Each \(S_{M_i}\) is a filtered union of the fixed-base flat
charts \eqref{eq:flat-chart}, so it is faithfully flat over
\(R_{\mathfrak m_i}\). Every element is eventually associated
to an original element. Hence its localization is a UFD by
\cref{lem:association}, and \(S\) is a UFD by
\cref{lem:semilocal-ufd}.

Finally let \(0\ne P\in\operatorname{Spec}S\), and choose
\(M_i\supseteq P\). Local association of a nonzero element
of \(P\), followed by contraction from \(S_{M_i}\), puts a
nonzero original numerator in \(P\). Thus \(P\) contains an
original prime factor \(p\). If \(p\) originally had one
support, it remains a unit at every other center. Otherwise
\eqref{eq:product-block} forces one of its singly supported
factors into \(P\). The images of these factors retain their
supports under every refinement, since they are multiplied by
units. Therefore \(P\) can lie under only one maximal ideal.
\end{proof}

\begin{lemma}\label{lem:birational-association}
Let \(A\subset S\) be an association extension and let
\(A\subset B\subset\operatorname{Frac}A\), with \(B\)
local. Then \(T=S\otimes_A B\) embeds in
\(\operatorname{Frac}S\), is local, and \(B\subset T\) is
an association extension. In particular, a UFD \(B\) gives
a UFD \(T\) with the same polynomial dimensions.
\end{lemma}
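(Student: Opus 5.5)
The plan is to realize \(T\) concretely as a localization of \(S\) and then read off all assertions from the definition of an association extension together with \cref{lem:association}.

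\textbf{Step 1: realize \(T\) inside \(\operatorname{Frac}S\).} Write \(B\) as the localization of the subring \(A[b_1,b_2,\ldots]\subset\operatorname{Frac}A\) at a prime, or more simply note that \(B\) is a filtered union of finitely generated \(A\)-subalgebras \(B_\lambda\) of \(\operatorname{Frac}A\). Flatness of \(S\) over \(A\) makes \(S\otimes_A B_\lambda\to S\otimes_A\operatorname{Frac}A\) injective. Since \(\operatorname{Frac}A\) is a localization of \(A\), we have \(S\otimes_A\operatorname{Frac}A=(A\setminus0)^{-1}S\subset\operatorname{Frac}S\). Tensor products commute with filtered colimits, so \(T\) embeds as the subring \(SB\subset\operatorname{Frac}S\). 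Every element of this subring has the form \(\sum s_ib_i\) with \(s_i\in S\) and \(b_i\in B\). The map \(B\to T\) is flat as a base change of the flat map \(A\to S\).

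\textbf{Step 2: association.} Let \(0\ne\tau\in T\). Choose a common denominator \(0\ne c\in A\) with \(c b_i\in A\) for all \(i\), so that \(c\tau=s\in S\). Write \(s=au\) with \(a\in A\) and \(u\in S^\times\). Then \(\tau=(a/c)u\), where \(a/c=\tau u^{-1}\in T\cap\operatorname{Frac}A\).

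The main obstacle is to show that \(T\cap\operatorname{Frac}A=B\). I would argue by faithful flatness of \(B\to T\). This needs to be established first, and I plan to get it from locality below. Given faithful flatness, for \(x\in\operatorname{Frac}A\) with \(x\in T\), consider the ideal \((B:_B x)\). Flatness gives \((B:x)T=(T:x)\), which is \(T\), so faithful flatness yields \((B:x)=B\) and hence \(x\in B\). This also shows that the units of \(S\) are units of \(T\), giving the factorization \(\tau=(a/c)u\) with \(a/c\in B\) and \(u\in T^\times\).

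\textbf{Step 3: locality and faithful flatness.} Let \(\mathfrak n\) be the maximal ideal of \(B\) and \(\mathfrak p=\mathfrak n\cap A\), a prime of \(A\). By \cref{lem:association}, \(\mathfrak pS\) is the unique prime of \(S\) over \(\mathfrak p\), and the association property persists under localization at \(\mathfrak p\). Any prime \(Q\) of \(T\) contracts to a prime of \(S\) that corresponds via \cref{lem:association} to some prime \(q\) of \(A\).

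To show that \(T\) is local, I would show that every nonunit \(\tau\in T\) lies in \(\mathfrak nT\), and that \(\mathfrak nT\ne T\). Using the Step 2 representation \(\tau=(a/c)u\) computed in \(\operatorname{Frac}S\), the element \(\tau\) is a unit of \(T\) exactly when \(a/c\) is, and this holds precisely when \(c/a\in T\). Units of \(B\) are units of \(T\), so if \(a/c\in\mathfrak n\) then \(\tau\in\mathfrak nT\). The remaining cases need \(\mathfrak nT\ne T\), which follows from flatness plus going-down over the localization \(A_\mathfrak p\subset S_{\mathfrak pS}\): a prime of \(S\) lies over \(\mathfrak p\), and its extension to \((A\setminus\mathfrak p)^{-1}S\) meets \(T\) in a prime lying over \(\mathfrak n\).

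The argument contains a circularity between membership in \(B\) and locality. I would resolve it by first proving \(\mathfrak nT\ne T\) via going-down, then faithful flatness, which is equivalent to flatness plus \(\mathfrak nT\ne T\) for local \(B\), then \(T\cap\operatorname{Frac}A=B\), and finally association and locality. The locality of the inclusion is then automatic.

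\textbf{Conclusion.} Once \(B\subset T\) is an association extension, \cref{lem:association} gives that \(T\) is a UFD when \(B\) is, and that \(\dim T[n]=\dim B[n]\) for every \(n\ge0\).
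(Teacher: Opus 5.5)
Your Steps~1 and~2 and your locality argument match the paper's proof. The colon-ideal argument for \(T\cap\operatorname{Frac}A=B\) is the paper's \(c\in dT\cap B=dB\). Clearing denominators and then using association makes every element of \(T\) associated to an element of \(B\), and this gives the maximal ideal \(\mathfrak nT\).

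The gap is in Step~3, exactly where you try to break the circularity you perceive. You claim that \(\mathfrak nT\ne T\) ``follows from flatness plus going-down''. Going-down produces primes \emph{below} a given prime, but you need a prime of \(T\) lying \emph{over} the maximal ideal \(\mathfrak n\). Also, \(A\setminus\mathfrak p\subseteq B^\times\), so \((A\setminus\mathfrak p)^{-1}S\subseteq T\). Hence ``its extension \ldots meets \(T\) in a prime lying over \(\mathfrak n\)'' has no content. Extending \(\mathfrak pS_{\mathfrak p}\) to \(T\) only gives \(\mathfrak pT\subseteq\mathfrak nT\), and nothing makes that proper. Flatness and going-down alone cannot suffice. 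Take \(A=k[x,y]_{(x,y)}\), \(S=A[1/x]\) and \(B=A[y/x]_{(x,y/x)}\): the map \(A\to S\) is flat, yet \(T=B[1/x]\) and \(\mathfrak nT=T\).

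The missing idea is that faithful flatness is stable under base change. For every \(B\)-module \(N\) one has \(T\otimes_BN\cong S\otimes_AN\), so \(B\to T\) is faithfully flat outright. In particular \(T/\mathfrak nT\cong S\otimes_A(B/\mathfrak n)\ne0\). If you want to use your observation that \(\mathfrak pS\) lies over \(\mathfrak p\), the correct mechanism is the fiber computation \(T/\mathfrak nT\cong\bigl(S\otimes_A\kappa(\mathfrak p)\bigr)\otimes_{\kappa(\mathfrak p)}B/\mathfrak n\). This is a tensor product of nonzero vector spaces, not a going-down argument. It is exactly the content of the paper's one-line assertion that \(B\to T\) is faithfully flat.

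With this fix there is no circularity. Faithful flatness comes first, then \(T\cap\operatorname{Frac}A=B\), then association, then locality. Finally \cref{lem:association} gives the UFD and dimension conclusions.
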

\begin{proof}
Flatness embeds \(T\) in the localization of \(S\) at
\(A\setminus\{0\}\), which association identifies with
\(\operatorname{Frac}S\). The map \(B\to T\) is faithfully
flat. Moreover \(T\cap\operatorname{Frac}A=B\): if
\(c/d\in T\), with \(c,d\in B\), \(d\ne0\), then
\(c\in dT\cap B=dB\). Every element of \(T\) is \(s/r\),
with \(s\in S\) and \(0\ne r\in A\), by combining its
finitely many denominators. Write \(s=au\) by association.
Then \(a/r\in T\cap\operatorname{Frac}A=B\), so \(s/r\)
is associated to an element of \(B\). The ideal-lattice argument
in \cref{lem:association} now shows that \(T\) has the unique
maximal ideal \(\mathfrak m_BT\) and proves the remaining claims.
\end{proof}

\begin{lemma}\label{lem:rational-padding}
Let \(B\) be a countable local domain containing \(\mathbb Q\)
and having a nonzero element \(\pi\in\mathfrak m_B\).
For a finite or countable list of independent variables \(Z\),
there is an association extension \(B\subset B'\) containing
\(B[Z]_{(\mathfrak m_B,Z)}\) locally and having fraction field
\(\operatorname{Frac}(B)(Z)\).
\end{lemma}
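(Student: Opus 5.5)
We will build \(B'\) as a countable direct limit of local charts, each an association-type extension of \(B\), in the same way as the limit in \cref{thm:separation}. The starting chart is \(B_0=B[Z]_{(\mathfrak m_B,Z)}\), or a finite-variable version when \(Z\) is countable, with new variables adjoined fairly. This chart is faithfully flat and local over \(B\), but it is not an association extension. For example, \(1+Z_1\) is a unit, yet \(Z_1\) is associated to no element of \(B\).

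The key move is to \emph{re-coordinatize} the variables so that every element becomes associated to an element of \(B\), while the fraction field stays \(\operatorname{Frac}(B)(Z)\). Concretely, we process one nonzero element \(f\) of a current stage at a time, and we imitate the \(\pi\)-adic coordinate refinement of \cref{thm:separation}. We write \(f=h/u\), with \(h\) a polynomial in finitely many current coordinates over \(B\) and \(u\) a local unit. We choose rational values \(\lambda\in\pi\mathbb Q\), or more safely \(\lambda\in\pi\mathbb Z\subset\mathfrak m_B\), so that \(a=h(\lambda)\ne0\) in \(B\). This is possible because a nonzero polynomial over the domain \(\operatorname{Frac}B\) cannot vanish on all of \(\pi\mathbb Z^N\).

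We then substitute new coordinates \(Z=\lambda+\pi a^2Z'\), one fresh copy per old coordinate, and localize at \((\mathfrak m_B,Z')\). This map is injective and birational, since it is an invertible affine change over \(\operatorname{Frac}B\). Hence the fraction field is unchanged. Moreover it is local and preserves the residue field, and the new chart is again a localized polynomial ring \(B[Z']_{(\mathfrak m_B,Z')}\), so it is faithfully flat over \(B\). In the new chart
\[
 h=a+a^2\pi g=a(1+a\pi g),\qquad g\in B[Z'],
\]
so \(f\) is \(a\) times a unit. This identity persists in all later stages, because later maps are local homomorphisms and carry units to units.

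We enumerate all elements of all stages diagonally, interleaving the adjunction of new variables, so every element is eventually processed. The union \(B'\) is then local, with maximal ideal the union of the maximal ideals. It is flat over \(B\) as a filtered union of flat modules, and faithfully flat because it is local over a local ring. It is countable, has fraction field \(\operatorname{Frac}(B)(Z)\), and contains \(B_0\) via the first chart. Every nonzero element of \(B'\) is an element of \(B\) times a unit, which gives the association property.

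The main obstacle is the injectivity of the transition maps together with correct bookkeeping. Each substitution is applied to the current coordinates, so earlier identities \(f=a\cdot\text{unit}\) must be stated in terms of elements, not coordinates. This is fine, because a unit remains a unit under any local homomorphism. We also need the choice \(\lambda\in\mathfrak m_B\), rather than merely rational values, so that the new maximal ideal contracts correctly and the map is local. Using \(\pi\) for this is exactly why the hypothesis \(0\ne\pi\in\mathfrak m_B\) is needed.
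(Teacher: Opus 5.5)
Your proposal is correct and follows essentially the same route as the paper. The paper likewise uses local polynomial charts over \(B\), chooses values in \(\pi\mathbb Q\) at which a numerator is nonzero, substitutes \(t_j=\pi c_j+a^2\pi T_j\) so that the numerator becomes \(a(1+a\pi h)\), and passes to the fair local union, which is faithfully flat, has association, and keeps the fraction field \(\operatorname{Frac}(B)(Z)\). Your restriction to \(\pi\mathbb Z\) is unnecessary: \(\mathbb Q\subset B\) already gives \(\pi\mathbb Q\subset\mathfrak m_B\).
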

\begin{proof}
Use finite stages \(B[t_1,\ldots,t_k]_{(\mathfrak m_B,t)}\).
For a nonzero numerator \(f(t)\), choose rational \(c_j\)
with \(a=f(\pi c_1,\ldots,\pi c_k)\ne0\), and replace
\[
 t_j=\pi c_j+a^2\pi T_j\qquad(1\le j\le k).
\]
These are injective local maps and invertible affine changes
over \(\operatorname{Frac}B\), so they preserve the specified
rational function field. In the new stage
\(f=a(1+a\pi h)\), with the second factor a unit.
Process all numerators fairly, introducing fresh independent
variables when needed. Each stage is flat over the fixed \(B\);
the local union is faithfully flat and has association for every
element. Previously obtained unit factors remain units under
the local maps. This proves the lemma.
\end{proof}

\begin{proposition}\label{prop:exact-gluing}
Let \(S\) be a semilocal UFD with maximal ideals \(M_i\),
such that every nonzero prime has one maximal support. Suppose
\(S_{M_i}\subset T_i\subset\operatorname{Frac}S\) are local
UFDs whose maximal ideals contract to those of \(S_{M_i}\).
Then \(T=\bigcap_iT_i\) is a semilocal UFD and
\[
 T_{N_i}=T_i,\qquad N_i=T\cap\mathfrak m_{T_i},\qquad
 \dim T[n]=\max_i\dim T_i[n].
\]
\end{proposition}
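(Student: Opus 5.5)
The plan is to reduce everything to the single identity \(T_{N_i}=T_i\). The paper's lemmas then handle factoriality and dimension: \cref{lem:semilocal-ufd} gives factoriality, and a localization argument gives the dimension formula. The hypothesis that each nonzero prime of \(S\) has exactly one maximal support is used only through the following consequence. Every prime element \(p\) of the UFD \(S\) lies in exactly one \(M_j\), and is a unit in \(S_{M_k}\) for every \(k\ne j\). Hence it is also a unit in each \(T_k\) with \(k\ne j\), since \(S_{M_k}\subseteq T_k\).

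\emph{Step 1: localization.} Fix \(i\). The inclusion \(T_{N_i}\subseteq T_i\) is clear: \(T\subseteq T_i\), and elements of \(T\setminus N_i\) are units of \(T_i\). For the reverse inclusion, take \(x\in T_i\subseteq\operatorname{Frac}S\) and write \(x=b/c\) with \(b,c\in S\), \(c\ne0\). Factor \(c\) in \(S\) as \(c=c_ic'\), where \(c_i\) collects the prime factors supported at \(M_i\), together with the unit, and \(c'\) collects those supported at the other maximal ideals.

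\begin{itemize}
\item The element \(c'\) is a unit of \(S_{M_i}\), so \(c'\) lies in \(T\) but not in \(N_i\).
\item We have \(c'x=b/c_i\). This lies in \(T_i\), because \(x\in T_i\) and \(c'\in T_i\).
\item For \(j\ne i\), the element \(c_i\) is a unit of \(S_{M_j}\subseteq T_j\). Hence \(b/c_i\in T_j\).
\end{itemize}

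Therefore \(c'x\in T\), and \(x=(c'x)/c'\in T_{N_i}\). This gives \(T_{N_i}=T_i\).

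\emph{Step 2: maximal ideals and factoriality.} If \(y\in T\) is a unit in every \(T_i\), then \(y^{-1}\in\bigcap_iT_i=T\). So every nonunit of \(T\) lies in some \(N_i\), and the maximal ideals of \(T\) lie among the primes \(N_i\). Step 1 also shows that each \(N_i\) is maximal. Indeed, if \(N_i\subsetneq N_j\), then \(T_j=T_{N_j}\subseteq T_{N_i}=T_i\). But \(S_{M_i}\subseteq T_i\) and \(S_{M_j}\subseteq T_i\), and an overring of \(S\) containing both has only the zero prime over \(S\) by the single-support hypothesis. So \(T_i\) would be the field \(\operatorname{Frac}S\), which is impossible because its maximal ideal contracts to \(M_i\ne0\). (If some \(N_i\) coincide, we simply list them once.) Thus \(T\) is semilocal with maximal localizations \(T_i\), which are UFDs, and \cref{lem:semilocal-ufd} makes \(T\) a UFD.

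\emph{Step 3: dimension.} Take a finite prime chain in \(T[n]\). Its top prime contracts to a prime of \(T\) lying in some \(N_i\). The whole chain therefore survives in the localization of \(T[n]\) at \(T\setminus N_i\), which is \(T_{N_i}[n]=T_i[n]\). Hence \(\dim T[n]\le\max_i\dim T_i[n]\). Conversely, each \(T_i[n]\) is a localization of \(T[n]\), which gives the reverse inequality.

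The only delicate point is Step 1, namely producing a denominator that lies in \(T\setminus N_i\). The factorization of \(c\) according to the unique maximal support of each prime factor is designed exactly for this. The remaining ingredient to check is that the \(T_i\) are pairwise incomparable, which is the argument given in Step 2.
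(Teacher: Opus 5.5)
Your proposal is correct and follows essentially the same route as the paper. Both proofs split the denominator by maximal support to get \(T_{N_i}=T_i\), then apply \cref{lem:semilocal-ufd} and the localization argument for \(\dim T[n]\). The only difference is in Step 2: the paper gets incomparability of the \(N_i\) directly from \(N_i\cap S=M_i\), and it names the finite prime avoidance you leave implicit, namely that a maximal ideal contained in \(\bigcup_iN_i\) lies in some \(N_i\).
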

\begin{proof}
The nonunits of \(T\) are the union of the \(N_i\).
Their contractions to \(S\) are the distinct maximal ideals
\(M_i\), so the \(N_i\) are incomparable. Finite prime
avoidance makes them precisely the maximal ideals of \(T\).
For \(x\in T_i\), write \(x=a/b\) with \(a,b\in S\),
\(b\ne0\), and factor \(b=b_i c\), putting exactly its
prime factors supported at \(M_i\) into \(b_i\). Then
\(c\notin M_i\), and \(b_i\) is a unit at every other
\(M_j\). Thus \(cx=a/b_i\in S_{M_j}\subset T_j\) for
\(j\ne i\), and also \(cx\in T_i\). Therefore \(cx\in T\)
and \(c\notin N_i\), proving \(T_{N_i}=T_i\).
Factoriality follows from \cref{lem:semilocal-ufd}.
Every finite polynomial prime chain survives in a maximal
localization containing the contraction of its top prime;
conversely, a chain in a localization contracts without
collapsing. This gives the displayed dimension formula.
\end{proof}

\section{Assembly and consequences}\label{sec:assembly}

\begin{theorem}\label{thm:mixed}
Every finite maximum of the ordinary profiles
\eqref{eq:ordinary-profile} and delayed profiles
\eqref{eq:delayed-profile} is realized by a countable semilocal
UFD containing \(\mathbb Q\).
\end{theorem}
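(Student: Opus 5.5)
The plan is to build one semilocal ring that carries each profile at its own maximal ideal, then separate the maximal ideals and glue. Let the finite maximum be taken over profiles \(P_1,\ldots,P_m\), each either ordinary \(O_{r_j,\mathbf t_j}\) or delayed \(B_{r_j,\mathbf t_j}\).

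\emph{Construction at each node.} Apply \cref{thm:ordinary} with \(m\) branches:
\begin{itemize}
\item for an ordinary profile \(O_{r,\mathbf t}\), branch \(j\) gets \(d_j=r\) and caps \(\mathbf t\);
\item for a delayed profile \(B_{r,\mathbf t}\) with \(r\ge1\), write \(\mathbf t=(t_1,\ldots,t_r)\), and branch \(j\) gets \(d_j=r-1\) with caps \(t_1,\ldots,t_{r-1}\).
\end{itemize}
This produces a countable semilocal UFD \(C\) with localizations \(C_j\). In the ordinary case \(\dim C_j[n]=O_{r,\mathbf t}(n)\) directly. In the delayed case, \cref{thm:ordinary} gives \(\dim C_j[n]=n+r+\sum_{i<r}\min(n,t_i)\). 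It also supplies every hypothesis of \cref{thm:relative} for \((C_j,\pi_j)\): \(C_j[1/\pi_j]\) is a PID, the Hilbertian conditions hold, there is a dominating discrete valuation \(\nu_j\), and \(\dim C_j[n]=1+\dim(C_j/\pi_j)[n]\). So \cref{thm:relative} with cap \(t=t_r\) gives a local UFD \(A_j\supset C_j\) with \(\dim A_j[n]=\dim C_j[n]+\max\{1,\min(n,t_r)\}\), which equals \(B_{r,\mathbf t}(n)\).

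If \(t_r=0\), \cref{cor:krull-envelope} already lets us replace that delayed profile by ordinary ones. So we may assume every delayed cap is positive, as \cref{thm:relative} requires. We may reorder caps so that \(t_r\) is any chosen one.

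\emph{Gluing.} The difficulty is that each \(A_j\) is a local overring built on one localization \(C_j\), not on all of \(C\). Also the \(A_j\) are not birational over \(C\): their fraction fields are purely transcendental over \(\operatorname{Frac}C\).
\begin{enumerate}
\item Let \(Z\) be the disjoint union of all the new transcendental variables across branches. Set \(F'=\operatorname{Frac}(C)(Z)\).
\item For every branch, apply \cref{lem:rational-padding} to \(C_j\), or to \(A_j\), using the variables not already present at that branch. This gives local UFD association extensions \(A_j'\) with common fraction field \(F'\) and unchanged polynomial dimensions, by \cref{lem:association}.
\item Pass to \(C'=\) a semilocal UFD with fraction field \(F'\) whose localizations are padded versions of the \(C_j\). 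For instance, adjoin all of \(Z\) to \(C\) and localize at the centers extended by the variables, then apply the association-padding fairly at all nodes.
\item Apply \cref{thm:separation} to \(C'\). This gives \(S\) with one maximal support per nonzero prime, and association extensions \(C'_{\mathfrak m_j}\subset S_{M_j}\).
\item Base change each birational local overring \(A_j'\supseteq C'_{\mathfrak m_j}\) along that association extension using \cref{lem:birational-association}. This gives local UFDs \(T_j\) with \(S_{M_j}\subset T_j\subset\operatorname{Frac}S\) and the same polynomial dimensions. Their maximal ideals contract correctly.
\end{enumerate}
\cref{prop:exact-gluing} then makes \(T=\bigcap_jT_j\) a semilocal UFD with \(\dim T[n]=\max_j\dim T_j[n]=\max_jP_j(n)\). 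Countability and \(\mathbb Q\subset T\) are inherited.

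The hard step is ensuring that \(A_j'\) is a birational overring of \(C'_{\mathfrak m_j}\) with local domination. That is, the variables adjoined by \cref{thm:relative} at branch \(j\) must be exactly the padding variables at the other branches, and \(C'_{\mathfrak m_j}\subset A_j'\) must hold. I would first verify that \(A_j\) contains \(C_j[x,z]\) localized at its maximal ideal. Its first stage is that local polynomial ring, and all later stages contain it. So one can take \(C'_{\mathfrak m_j}\) to be a padding of \(C_j[x,z]_{\mathrm{loc}}\). If the two paddings do not literally match, I would instead apply \cref{lem:rational-padding} to \(C\) globally before separation. Its conclusion is local, so this would be done nodewise and glued by \cref{lem:semilocal-ufd}.
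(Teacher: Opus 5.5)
Your route is the paper's. You use the same lower profiles from \cref{thm:ordinary}; you send \(t_r\) to the relative row where the paper sends \(t_1\), which is immaterial because delayed profiles are symmetric in their caps. Then come \cref{thm:relative} at the delayed branches, \cref{lem:rational-padding} to reach the common field \(\operatorname{Frac}(C)(Z)\), and finally \cref{thm:separation}, \cref{lem:birational-association} and \cref{prop:exact-gluing}. Your profile bookkeeping at the delayed branches is correct.

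The one step that fails as written is step~3, and it causes the ``hard step'' you leave open. You ask for a semilocal \(C'\) whose localizations are \emph{padded} (association) extensions of the \(C_j\). No result in the paper produces this. \cref{lem:rational-padding} concerns a single local ring. \cref{lem:semilocal-ufd} only certifies factoriality of a semilocal domain you already have; it does not glue local rings into one. The only gluing tool, \cref{prop:exact-gluing}, presupposes the separated ring that \(C'\) is meant to produce, so your fallback is circular. Even nodewise, a padding of \(C_j\) or of \(C_j[x,z]_{\mathrm{loc}}\) has no reason to lie inside the padding \(A_j'\) of \(A_j\), because the two constructions make unrelated substitutions.

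The remedy is not to pad \(C'\) at all. Take \(C'=R\), the semilocalization of \(C[Z]\) outside \(\bigcup_j(\mathfrak m_j,Z)\), whose localizations are \(R_j=C_j[Z]_{(\mathfrak m_{C_j},Z)}\). Containment is then automatic. As you observed, \(A_j\) contains \(C_j[x,z]_{\mathrm{loc}}\) locally. Applying \cref{lem:rational-padding} to \(A_j\) with the unused variables \(Z'\) gives \(A_j'\supseteq A_j[Z']_{(\mathfrak m_{A_j},Z')}\supseteq R_j\), with both inclusions local. At ordinary branches, pad \(C_j\) by all of \(Z\).

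The dimension profile of \(R\) is irrelevant. \cref{thm:separation} needs only a countable semilocal UFD, not even a finite-dimensional one, which matters when \(Z\) is infinite. \cref{lem:birational-association} transports the profile of \(A_j'\), not that of \(R_j\), to \(T_j\). Local containment together with association gives the contraction of maximal ideals that \cref{prop:exact-gluing} requires. With this change your proof is exactly the paper's.
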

\begin{proof}
Replace a delayed profile with zero caps by the two ordinary
profiles in \cref{cor:krull-envelope}. For an ordinary target,
prescribe that same profile as its lower profile. For a delayed
target with positive caps \(t_1,\ldots,t_r\), prescribe
\[
 c(n)=n+r+\sum_{i=2}^r\min(n,t_i).
\]
\Cref{thm:ordinary} provides one countable semilocal UFD \(C\)
whose maximal localizations \(C_j\) have precisely these lower
profiles and the arithmetic and valuation hypotheses of
\cref{thm:relative}. Their fraction field is the common
\(F=\operatorname{Frac}C\).

At a delayed branch apply \cref{thm:relative} to \(C_j\) with
capacity \(t_1\). The resulting local UFD \(A_j\) satisfies
\[
 \dim A_j[n]=c(n)+\max\{1,\min(n,t_1)\}.
\]
At \(n=0\) this is \(r+1\). For \(n\ge1\), positivity
of all caps makes it
\(n+r+\sum_{i=1}^r\min(n,t_i)\), which is exactly the
delayed profile. At an ordinary branch put \(A_j=C_j\).

Choose one common finite or countably infinite list \(Z\) of
indeterminates large enough for all upper rows. At each branch
name its active variables using an initial portion of this list.
The relative construction retains its initial polynomial local
ring and adjoins explicit fractions in its fixed rational field;
its fraction field is therefore precisely \(F\) with those
active variables adjoined. Apply \cref{lem:rational-padding}
over the completed \(A_j\) to the unused variables. The new
local UFD \(B_j\) has the same profile as \(A_j\), contains
\[
 R_j=C_j[Z]_{(\mathfrak m_{C_j},Z)}
\]
locally, and has fraction field \(F(Z)\). In particular every
\(B_j\) is birational over its \(R_j\). Countably many
variables cause no change in this argument: every fraction
and every identity involves only finitely many variables.

Let \(R\) be the semilocalization of \(C[Z]\) outside the
union of the ideals \((\mathfrak m_j,Z)\). It is a countable
semilocal UFD with maximal localizations \(R_j\). Apply
\cref{thm:separation}, obtaining \(S\supset R\). This theorem
does not require \(R\) to be finite-dimensional, so infinite
\(Z\) is allowed. By \cref{lem:birational-association},
\[
 T_j=S_{M_j}\otimes_{R_j}B_j\subset\operatorname{Frac}S
\]
is a local UFD with the profile of \(B_j\). Its maximal ideal
contracts to that of \(S_{M_j}\): for \(s=au\in S_{M_j}\),
association reduces membership to \(a\in\mathfrak m_{B_j}
\cap R_j=\mathfrak m_{R_j}\).
\Cref{prop:exact-gluing} now makes \(T=\bigcap_jT_j\) a
semilocal UFD with
\(\dim T[n]=\max_j\dim T_j[n]\). It is countable as a
subring of the countable field \(\operatorname{Frac}S\), and
contains \(\mathbb Q\). This is the required maximum.
\end{proof}

\begin{proof}[Proof of \cref{thm:main}]
Necessity follows from \cref{prop:necessary}, since every UFD
is Krull. Conversely, if \(d=0\), the first inequality forces
\(a_n=n\), which is realized by a field. For \(d\ge1\),
\cref{cor:krull-envelope} expresses the prescribed sequence as a finite
maximum of ordinary and delayed profiles. \Cref{thm:mixed}
constructs a UFD realizing that maximum.
\end{proof}

\begin{proof}[Proof of \cref{cor:bouvier}]
Apply \cref{thm:ordinary} with one branch, \(d_1=1\), and
\(t_{1,1}=1\). The resulting ring \(A\) is a local UFD with
\[
 \dim A[n]=n+2+\min(n,1)\qquad(n\ge0).
\]
In particular, \(\dim A=2\) and \(\dim A[X]=4\).
Every UFD is Krull, and the strict inequality
\(\dim A[X]>\dim A+1\) shows that \(A\) is not Jaffard.
This proves Bouvier's conjecture.
\end{proof}

\begin{corollary}\label{cor:two-dimensional-minimal}
For a sequence with \(a_0=2\) and \(a_1=3\), the following
are equivalent: it is realized by a Krull domain; it is realized
by a countable local UFD containing \(\mathbb Q\); and
\begin{equation}\label{eq:two-dimensional-delayed}
 a_n=n+\max\{2,1+\min(n,t)\}
 \quad(n\ge0)
\end{equation}
for some \(t\in\mathbb N_0\cup\{\infty\}\).
\end{corollary}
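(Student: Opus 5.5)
We prove the cycle of implications (UFD) \(\Rightarrow\) (Krull) \(\Rightarrow\) (formula) \(\Rightarrow\) (countable local UFD). The first implication is trivial.

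\emph{Krull implies the formula.} Let \(A\) be a Krull domain with \(\dim A=2\) and \(\dim A[X]=3\). We read off the excess \(e_n=a_n-n\) from \cref{thm:capacity}. Flags have length at most \(2\), and \(e_1=1\) forces every flag contribution to satisfy \(r+\sum\min(1,\tau_i)\le1\).

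\begin{itemize}
\item Consider a flag \(0\subsetneq\mathfrak p_1\subsetneq\mathfrak p_2\) of length two. Its contribution at \(n=1\) is \(2+\cdots\), which would exceed \(1\) unless the flag contributes nothing extra. So we only need that length-two flags contribute \(2\) for all \(n\) with all capacities zero.
\item Concretely, \(e_1\ge2\) already holds from the length-\(2\) base chain, since \(a_1\ge a_0+1=3\) gives \(e_1=2\). Redoing the arithmetic: \(e_0=2\) and \(e_1=2\). A two-edge flag must then have both capacities zero.
\item A one-edge flag \(0\subsetneq\mathfrak q\) contributes \(1+\min(n,\tau_A(0,\mathfrak q))\). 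By \cref{rem:krull-preliminaries}, \(\tau_A(0,\mathfrak q)=0\) when \(\operatorname{ht}\mathfrak q=1\). So only height-two (maximal-type) primes \(\mathfrak q\) matter, and their capacities are arbitrary.
\end{itemize}

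Hence \(e_n=\max\{2,\,1+\max_{\mathfrak q}\min(n,\tau_A(0,\mathfrak q))\}\). Setting \(t=\sup_{\mathfrak q}\tau_A(0,\mathfrak q)\in\mathbb N_0\cup\{\infty\}\) gives \eqref{eq:two-dimensional-delayed}, because \(\max_{\mathfrak q}\min(n,\tau_{\mathfrak q})=\min(n,t)\).

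The main obstacle is to show that two-edge flags \(0\subsetneq\mathfrak p_1\subsetneq\mathfrak p_2\) cannot have positive capacity on either edge. Positive capacity would make \(e_1\ge3\), so \(a_1\ge4\), contradicting \(a_1=3\). Thus the hypothesis \(a_1=3\) is exactly what kills these flags, and the first edge is zero anyway by the height-one DVR argument. This is a direct check against the formula at \(n=1\).

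\emph{The formula implies a countable local UFD.}
\begin{itemize}
\item If \(t\le1\), the sequence is \(a_n=n+2\). This is realized by \cref{thm:ordinary} with one branch, \(d_1=1\) and \(t_{1,1}=0\), giving \(n+2\) as required.
\item If \(t\ge2\), apply \cref{thm:relative} to the DVR-like base \(C\) from \cref{thm:ordinary} with one branch and \(d_1=0\), so that \(\dim C[n]=n+1\). Use capacity \(t\) in \cref{thm:relative}. This yields a countable local UFD with \(\dim A[n]=n+1+\max\{1,\min(n,t)\}\), which is exactly \eqref{eq:two-dimensional-delayed}. The same construction also covers \(t=1\).
\end{itemize}

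For this application we must check the hypotheses of \cref{thm:relative}. Those hypotheses, including \eqref{eq:relative-lower-profile}, are precisely what \cref{thm:ordinary} supplies, as stated at the end of \cref{thm:relative}. The ring is local and contains \(\mathbb Q\).
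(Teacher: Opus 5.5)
Your proof is correct and is essentially the paper's. Evaluating the prime-flag formula at \(n=1\) forces zero capacities on two-edge flags, height-one primes have zero capacity by the DVR argument, and \(t\) is the supremum of \(\tau_A(0,\mathfrak q)\) over height-two primes. For the realization, the \(d_1=0\) output of \cref{thm:ordinary} is a DVR (by construction, \(\mathbb Q[p]_{(p-c_1)}\)), so your use of \cref{thm:relative} coincides with the paper's application over \(\mathbb Q[\pi]_{(\pi)}\); for \(t\le1\) the paper uses the simpler \(\mathbb Q[x,y]_{(x,y)}\) in place of \cref{thm:ordinary} with \(d_1=1\), \(t_{1,1}=0\). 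Do delete the opening slip ``\(e_1=1\)'': in fact \(e_1=a_1-1=2\), which you only correct midway through.
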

\begin{proof}
Let \(A\) be a two-dimensional Krull domain with
\(\dim A[1]=3\). For a height-one prime \(p\) properly
contained in a maximal ideal \(m\), a positive capacity
\(\tau_A(p,m)\) would give, by \cref{thm:capacity}, a
length-four chain in \(A[1]\). Thus every such capacity
vanishes. Also \(\tau_A(0,p)=0\), since \(A_p\) is a DVR.
The remaining nonsaturated flags are \(0<m\), with \(m\)
of height two. If \(t\) is the supremum of their capacities,
\cref{thm:capacity} gives \eqref{eq:two-dimensional-delayed}.
Height-one maximal ideals add no larger value, so this proves
necessity without a locality assumption on \(A\).

For \(t\ge1\), apply \cref{thm:relative} over
\(\mathbb Q[\pi]_{(\pi)}\). For \(t=0\), use
\(\mathbb Q[x,y]_{(x,y)}\). The cases \(t=0,1\) give
the same profile; every other parameter is determined by the
length of the run of increments equal to two.
\end{proof}

\begin{corollary}[The quantitative Bouvier--Kabbaj conjecture]\label{cor:bouvier-defect}
For every integer \(r\ge1\) there is a countable local UFD
\(A\supset\mathbb Q\) such that
\[
 \dim A=2,\qquad \dim A[X]=3,\qquad \dim_{\mathrm v}A=2+r.
\]
There is also a countable two-dimensional local UFD with
\(\dim A[X]=3\) and infinite valuative dimension.
\end{corollary}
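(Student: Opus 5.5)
The plan is to take the two-dimensional local UFDs already supplied by \cref{cor:two-dimensional-minimal}, or more directly by \cref{thm:relative}, and read off the valuative dimension from the resulting polynomial profile via Jaffard's formula in \cref{rem:jaffard}. Fix \(t=r+1\) for the finite case and \(t=\infty\) for the infinite case. Apply \cref{thm:relative} with lower ring \(C=\mathbb Q[\pi]_{(\pi)}\), exactly as in the proof of \cref{cor:two-dimensional-minimal}. The first step is to check that this \(C\) meets the hypotheses of \cref{thm:relative}:
\begin{itemize}
\item \(C\) is a countable DVR containing \(\mathbb Q\), and \(\pi\) is prime;
\item \(B=C[1/\pi]=\mathbb Q(\pi)\) is a field, so it is a PID and has no prime elements;
\item \(F=\mathbb Q(\pi)\) is rationally Hilbertian by \cref{lem:ordinary-rational-hilbert};
\item the \(\pi\)-adic valuation \(\nu\) dominates \(C\);
\item \(D=C/\pi C=\mathbb Q\), so \(\dim C[n]=n+1=1+\dim D[n]\), which is \eqref{eq:relative-lower-profile}.
\end{itemize}
(Alternatively this lower ring is the case \(m=1\), \(d_1=0\) of \cref{thm:ordinary}, which the statement of \cref{thm:relative} already certifies.)

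The theorem then yields a countable local UFD \(A\supset\mathbb Q\) with
\[
 \dim A[n]=n+1+\max\{1,\min(n,t)\}.
\]
At \(n=0\) this gives \(\dim A=2\). At \(n=1\), since \(t\ge1\), it gives \(\dim A[X]=3\). For \(n\ge t\) with \(t\) finite, \(\dim A[n]-n=1+t=2+r\). Since the expression \(1+\max\{1,\min(n,t)\}\) is nondecreasing in \(n\) and bounded by \(1+t\), the supremum of \(\dim A[n]-n\) is exactly \(2+r\). For \(t=\infty\) the excess \(1+n\) is unbounded.

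By Jaffard's identity \(\dim_{\mathrm v}A=\sup_n(\dim A[n]-n)\) from \cref{rem:jaffard}, we get \(\dim_{\mathrm v}A=2+r\) in the finite case and \(\dim_{\mathrm v}A=\infty\) in the infinite case. This proves both assertions.

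There is no real obstacle here. The only point needing care is verifying the hypotheses of \cref{thm:relative} for the trivial lower ring: the "every \(B/aB\)" condition is vacuous because \(B\) is a field. Everything else is substitution into the already established profile formula together with the classical Jaffard formula.
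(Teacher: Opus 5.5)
Your proposal is correct and follows the paper's proof. The paper also takes the profile \eqref{eq:two-dimensional-delayed} with \(t=r+1\) (respectively \(t=\infty\)), realized through \cref{thm:relative} over \(\mathbb Q[\pi]_{(\pi)}\) as in \cref{cor:two-dimensional-minimal}, and then reads off \(\dim_{\mathrm v}A=\sup_n(\dim A[n]-n)=t+1=r+2\) (respectively \(\infty\)) from Jaffard's identity; your explicit check of the hypotheses of \cref{thm:relative} for the DVR lower ring is a detail the paper leaves implicit.
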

\begin{proof}
Use \eqref{eq:two-dimensional-delayed} with \(t=r+1\),
respectively \(t=\infty\), and the identity
\(\dim_{\mathrm v}A=\sup_n(\dim A[n]-n)\).
For finite \(t\), the displayed excess has supremum
\(t+1=r+2\); for infinite \(t\), it is unbounded.
\end{proof}

\begin{corollary}\label{cor:two-dimensional-full}
The two-dimensional Krull/UFD profiles are exactly
\[
 a_n=n+\max\{2+\min(n,T),1+\min(n,U)\},
 \qquad T,U\in\mathbb N_0\cup\{\infty\}.
\]
Every such profile has a countable semilocal UFD realization.
\end{corollary}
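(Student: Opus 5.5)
Both directions of \cref{cor:two-dimensional-full} follow from results already in place. Necessity comes from the prime-flag formula (\cref{thm:capacity}) and the Krull restrictions on capacities. Sufficiency comes from \cref{thm:mixed}, after writing the displayed profile as a maximum of ordinary and delayed profiles with \(d=2\).

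\emph{Necessity.} Let \(A\) be a two-dimensional Krull domain; no locality is assumed. The flags in \cref{thm:capacity} are of three kinds:
\begin{itemize}
\item \(0\) alone, contributing \(n\);
\item \(0<\mathfrak p\) with \(\mathfrak p\) of height one, or \(0<\mathfrak p<\mathfrak m\) with \(\mathfrak p\) of height one;
\item \(0<\mathfrak m\) with \(\mathfrak m\) of height two.
\end{itemize}
By \cref{rem:krull-preliminaries}, \(\tau_A(0,\mathfrak p)=0\) for every height-one prime. So a saturated flag through \(\mathfrak p\) contributes \(n+2+\min(n,\tau_A(\mathfrak p,\mathfrak m))\), and the flag \(0<\mathfrak p\) alone contributes \(n+1\). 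The flag \(0<\mathfrak m\) contributes \(n+1+\min(n,\tau_A(0,\mathfrak m))\).

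Let \(T\) be the supremum of \(\tau_A(\mathfrak p,\mathfrak m)\) over height-one primes \(\mathfrak p\subsetneq\mathfrak m\), and \(U\) the supremum of \(\tau_A(0,\mathfrak m)\) over height-two maximal ideals. Since \(\dim A=2\), at least one saturated length-two flag exists, so the term \(2+\min(n,T)\) is attained. Because \(\min(n,\cdot)\) commutes with suprema, \cref{thm:capacity} then gives exactly
\[
 a_n=n+\max\{2+\min(n,T),\,1+\min(n,U)\}.
\]
The term \(n+1\) is dominated by the saturated term. Infinite suprema are harmless since \(\min(n,\infty)=n\).

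\emph{Sufficiency.} Take \(d=2\). The term \(n+2+\min(n,T)\) is the ordinary profile \(O_{1,(T)}\) of \eqref{eq:ordinary-profile}. The term \(n+1+\min(n,U)\) must be combined with the baseline. For \(U\ge1\),
\[
 n+\max\{2,1+\min(n,U)\}=B_{1,(U)}(n),
\]
the delayed profile \eqref{eq:delayed-profile} with \(r=1\). The baseline \(n+2\) is itself below \(O_{1,(T)}\), so adjoining it changes nothing. If \(U=0\), the second term is \(n+1\), which is dominated and can simply be dropped. Hence every displayed profile is \(\max\{O_{1,(T)},B_{1,(U)}\}\) or \(O_{1,(T)}\) alone, and \cref{thm:mixed} realizes it by a countable semilocal UFD containing \(\mathbb Q\). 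Every UFD is Krull, so this closes the equivalence.

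\emph{Main obstacle.} The only delicate point is in necessity: the classification of flags must cover non-saturated flags. One has to check that the flag \(0<\mathfrak m\) with \(\mathfrak m\) of height one is of the form \(0<\mathfrak p\) and contributes only \(n+1\), and that no flag has more than two nonzero members because \(\dim A=2\). Once this bookkeeping is done, both directions are formal.
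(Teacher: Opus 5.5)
Your proof is correct. The sufficiency half is the paper's: you apply \cref{thm:mixed} to \(O_{1,(T)}\) and \(B_{1,(U)}\), and handle \(U=0\) by dropping the dominated term.

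Your necessity argument takes a different route. The paper argues purely numerically:
\begin{itemize}
\item by \cref{prop:necessary}, a two-dimensional Krull domain satisfies \eqref{eq:P} and \eqref{eq:K};
\item \cref{cor:krull-envelope} then writes its sequence as a finite maximum of supports;
\item \(a_0=2\) forces these supports to be \(n+1\), \(n+2+\min(n,T)\), or \(B_{1,(U)}\);
\item each family is totally ordered by its parameter, so the finite maximum collapses to the displayed form.
\end{itemize}

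You instead apply \cref{thm:capacity} directly. You classify the flags available in dimension two and use \(\tau_A(0,\mathfrak p)=0\) at height-one primes. This is the argument the paper uses for necessity in \cref{cor:two-dimensional-minimal}, but without the restriction \(a_1=3\). Your bookkeeping is sound: the flags have at most two nonzero members, and height-two primes are maximal. You also correctly note that \(\min(n,\cdot)\) commutes with suprema over \(\mathbb N_0\cup\{\infty\}\).

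What each route buys:
\begin{itemize}
\item Yours gives the parameters an intrinsic meaning, \(T=\sup\tau_A(\mathfrak p,\mathfrak m)\) over height-one \(\mathfrak p\subsetneq\mathfrak m\) and \(U=\sup\tau_A(0,\mathfrak m)\) over height-two \(\mathfrak m\), and it bypasses the envelope machinery of \cref{sec:numerical}.
\item The paper's route yields the slightly stronger numerical statement that every integer sequence with \(a_0=2\) satisfying \eqref{eq:P} and \eqref{eq:K} already has the displayed form. This ties the corollary directly to \cref{thm:main}.
\end{itemize}
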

\begin{proof}
In \cref{cor:krull-envelope}, every ordinary support below
initial dimension two is dominated by one of the profiles
\(n+2+\min(n,T)\); every delayed support has depth one and
is dominated by \(n+\max\{2,1+\min(n,U)\}\). Each family
is ordered by its parameter. Taking the largest parameters
therefore gives the displayed form. Conversely each displayed
maximum is realized by \cref{thm:mixed}.
\end{proof}

For instance, \((T,U)=(1,3)\) gives
\((2,4,5,7,8,9,\ldots)\). The unrestricted maximal envelope
is \(d+(d+1)n\), whereas the Krull/UFD maximal envelope is
\(d(n+1)\) for \(d\ge1\). Both are attained in their respective classes.
Finite products suffice for the unrestricted envelope construction;
the association and support-separation arguments are what allow
the same numerical operation to be performed within domains
with unique factorization.

\renewcommand{\sectionname}{}
\section*{Acknowledgments}
We used GPT-5.6 Sol and an agentic harness built around GPT-5.6 Sol and Claude Fable 5 to assist with literature searches, hypothesis testing, the exploration and elimination of potential approaches, wording refinement, and manuscript proofreading. We thank Hieu M. Vu, Tho Tran Huu, Khoi M. N. Nguyen, Dung V. Nguyen, and Quang X. Nguyen for their assistance with hardware-related matters and for providing technical support in the use of the AI tools and agentic system.

\bibliographystyle{plain}
\begingroup
\footnotesize
\raggedright
\bibliography{references}
\endgroup

\end{document}